\pgfplotsset{compat=1.5}
\providecommand{\mk}{\cellcolor{blue!15}}
\newtheorem{thm}{Theorem}[section]
\newtheorem{lem}[thm]{Lemma}
\theoremstyle{definition}
\numberwithin{equation}{section}
\def\eref#1{$(\ref{#1})$}
\def\sref#1{\S$\ref{#1}$}
\def\lref#1{Lemma~$\ref{#1}$}
\def\tref#1{Theorem~$\ref{#1}$}
\def\X{\mathscr{X}}
\def\inc{\hookrightarrow}
\renewcommand{\geq}{\geqslant}
\renewcommand{\leq}{\leqslant}
\renewcommand{\ge}{\geqslant}
\renewcommand{\le}{\leqslant}
\renewcommand{\emptyset}{\varnothing}
\g@addto@macro\bfseries{\boldmath}
\title{Latin squares without proper subsquares}
\author{Jack Allsop \ \ Ian M. Wanless\\
	\small School of Mathematics\\[-0.5ex]
	\small Monash University\\[-0.5ex]
	\small Vic 3800, Australia\\
	\small\tt jack.allsop@monash.edu \ \ ian.wanless@monash.edu}
\date{}
\begin{document}
	
	\maketitle
	
	\begin{abstract}
		A $d$-dimensional Latin hypercube of order $n$ is a $d$-dimensional
		array containing symbols from a set of cardinality $n$ with the
		property that every axis-parallel line contains all $n$ symbols
		exactly once. We show that for $(n, d) \notin \{(4,2), (6,2)\}$
		with $d \geq 2$ there exists a $d$-dimensional Latin hypercube of
		order $n$ that contains no $d$-dimensional Latin subhypercube of any
		order in $\{2,\dots,n-1\}$.  The $d=2$ case settles a 50 year old
		conjecture by Hilton on the existence of Latin squares without
		proper subsquares.
	\end{abstract}
	
	\section{Introduction}
	
	Let $n$ be positive integer. A \emph{Latin square} of order $n$ is an
	$n \times n$ matrix of $n$ symbols, such that each symbol occurs
	exactly once in each row and column. Let $L$ be a Latin square of
	order $n$. A \emph{subsquare} of order $k$ in $L$ is a $k \times k$
	submatrix of $L$ that is itself a Latin square.
        Clearly $L$ has $n^2$ subsquares of order one, and
	one subsquare of order $n$. A subsquare of $L$ of order $k \in
	\{2,3,\ldots, n-1\}$ is called \emph{proper}. A subsquare of
	order two is called an \emph{intercalate}. If $L$ has no
	proper subsquares then it is called $N_\infty$. Thousands of
	papers and several books have been written about properties
	and applications of Latin squares \cite{ls,DKII,Eva18,LM98}.
	However, one of the most natural and prominent questions in
	the area has defied solution until now.  Hilton conjectured
	that an $N_\infty$ Latin square of order $n$ exists for all
	sufficiently large $n$ (his conjecture was first stated
	in~\cite{ls}, albeit incorrectly).  From a series of papers
	including \cite{amninf,heinninf,ninfodd}, it has long been
	known that an $N_\infty$ Latin square of order $n$ exists for
	all positive integers $n$ not of the form $2^x3^y$ for
	integers $x \geq 1$ and $y \geq 0$. It had also been shown
	\cite{1subsq,WanThesis} that Hilton's conjecture holds for all
	orders $n<256$. In this work, we resolve his conjecture by
	constructing $N_\infty$ Latin squares for all previously
        unresolved orders.
	
Latin squares are part of a more general family of combinatorial objects called Latin hypercubes. For a positive integer $m$ let $[m] = \{1,2,\ldots, m\}$. Let $n$ and $d$ be positive integers. For each $i \in [d]$ let $I_i$ be a set of cardinality $n$, and let $I = I_1 \times I_2 \times \cdots \times I_d$. Consider $H : I \to \Sigma$ for some set $\Sigma$ of cardinality $n$. Denote the image of $(x_1,x_2,\ldots, x_d) \in I$ under $H$ by $H[x_1,x_2,\ldots, x_d] \in \Sigma$. We can naturally think of $H$ as a $d$-dimensional array whose $i$-th axis is indexed by $I_i$. We will treat the map $H$ and the corresponding array as interchangeable objects.
The array $H$ is a $d$-dimensional \emph{Latin hypercube} of order $n$ if
\[
\{H[c_1,\ldots, c_{k-1}, x, c_{k+1}, \ldots, c_d] : x \in I_k\} = \Sigma,
\]
for each $c = (c_1,c_2,\ldots, c_d) \in I$ and $k \in [d]$.
A one-dimensional Latin hypercube is a permutation and a two-dimensional Latin hypercube is a Latin square. A \emph{Latin cube} is a three-dimensional Latin hypercube. Suppose that $d \geq 2$ and let $k \leq n$ be an integer. For each $i \in [d]$ let $S_i \subseteq I_i$ be of cardinality $k$. The restriction $H' = H|_{S_1 \times S_2 \times \cdots \times S_d}$ of $H$ is a \emph{subarray} of $H$. If $H'$ contains exactly $k$ symbols then $H'$ is called a \emph{subhypercube} of $H$.
If $k \in \{2,3,\ldots, n-1\}$ then $H'$ is called \emph{proper}. If a Latin hypercube contains no proper subhypercubes then it is called $N_\infty$. When dealing with hypercubes $H : I_1 \times \cdots \times I_d \to \Sigma$ we will generally assume that each $I_i = [n]$ and also that $\Sigma = [n]$. However, we need to allow subhypercubes to have more general index sets.

The goal of this paper is to resolve the existence problem for
$N_\infty$ Latin hypercubes by proving the following theorem.

\begin{thm}\label{t:ninfhyper}
	Let $d \geq 2$ and $n$ be positive integers. There exists an
	$N_\infty$ Latin hypercube of order $n$ and dimension $d$ if and
	only if $(n, d) \not\in \{(4,2), (6,2)\}$.
\end{thm}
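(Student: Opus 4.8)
The plan is to separate the theorem into its necessity and sufficiency directions, then handle sufficiency by reducing the high-dimensional problem to the two-dimensional one and attacking the squares case with an explicit family of constructions.

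First I would dispose of the necessity direction. For this I need the classical fact that every Latin square of order $4$ or $6$ contains a proper subsquare (an intercalate). The order $4$ case follows because up to isotopy there are only two Latin squares of order $4$, and both contain intercalates; the order $6$ case is a finite check, classically known. So $(n,d) \in \{(4,2),(6,2)\}$ genuinely admits no $N_\infty$ example, establishing the ``only if'' direction.

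For sufficiency, the key structural idea is a \emph{dimension reduction}: I would show that an $N_\infty$ Latin square of order $n$ can be lifted to an $N_\infty$ Latin hypercube of order $n$ in every dimension $d \geq 2$. The natural candidate is a construction built from a single $N_\infty$ Latin square $L$, for instance defining $H[x_1,\dots,x_d] = L[x_1, x_2 + x_3 + \cdots + x_d]$ with the sum taken in an appropriate abelian group of order $n$, or some similar composition that feeds the extra coordinates through group operations into the two active coordinates of $L$. The content to verify is that any subhypercube of $H$ of order $k \in \{2,\dots,n-1\}$ would force the existence of a proper subsquare in $L$ (or in one of a small collection of related squares), contradicting $N_\infty$-ness of $L$. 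This handles every $d \geq 3$ uniformly, and also disposes of the orders $n \in \{4,6\}$ in dimensions $d \geq 3$, where a suitable seed object must be supplied separately since $N_\infty$ squares of those orders do not exist. So the crux reduces to the $d = 2$ case.

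The heart of the paper, and the main obstacle, is therefore constructing $N_\infty$ Latin squares of order $n$ for all $n \notin \{4,6\}$. As noted in the introduction, prior work already covers all $n$ not of the form $2^x 3^y$ and all $n < 256$, so I would focus on the remaining orders $n = 2^x 3^y$ with $x \geq 1$. For these I would seek a uniform recursive or product construction: start from a small stock of explicitly verified $N_\infty$ squares and a \emph{subsquare-free product operation} that combines an $N_\infty$ square of order $m$ with a suitable gadget to produce an $N_\infty$ square of order $2m$ or $3m$, thereby climbing through all orders of the form $2^x 3^y$. The danger with any product or prolongation construction is that it tends to \emph{create} subsquares: a naive direct product $L_1 \times L_2$ always contains proper subsquares of the factor orders. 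The real work is to perturb such a construction---by intercalating carefully chosen permutations or by a cyclic-type twisting of the blocks---so that no proper subsquare survives, and to prove this by a careful case analysis on the possible shapes (i.e.\ which rows, columns and symbols) a hypothetical proper subsquare could occupy.

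I expect the subsquare-elimination analysis in the $d=2$ construction to be the main obstacle: one must rule out proper subsquares of \emph{every} order $k \in \{2,\dots,n-1\}$ simultaneously, and subsquares can in principle straddle the block structure of the construction in many ways. The proof will hinge on showing that any such $k \times k$ subsquare projects down to a proper subsquare in one of the small seed squares or violates the group-theoretic twisting, so that the $N_\infty$ property is inherited up the recursion. Establishing the base cases by direct computation and making the inductive step water-tight against every straddling configuration is where the bulk of the effort lies.
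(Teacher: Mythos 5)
Your overall architecture matches the paper's: necessity via the classical nonexistence of $N_\infty$ squares of orders $4$ and $6$; sufficiency for $d\ge 3$ by lifting a lower-dimensional $N_\infty$ object through a group-sum construction (your $H[x_1,\dots,x_d]=L[x_1,x_2+\cdots+x_d]$ plays the same role as the paper's $\mathcal{H}_{d'}(H)=H[x_1,\dots,x_d]+\sum_{i>d}x_i$, and the inheritance argument you sketch is essentially the paper's Lemma~\ref{l:hninf}); explicit $N_\infty$ Latin cubes of orders $4$ and $6$ as separate seeds; and the $d=2$ case for orders $2^x3^y$ by a perturbed product recursion grown from computed base cases.

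The genuine gap is that the central step is named but not performed. Your treatment of the $d=2$ case for $n=2^x3^y$ is ``I would seek a uniform recursive or product construction'' together with a correct diagnosis of why naive products fail; but that construction \emph{is} the theorem. The paper's solution requires a specific apparatus --- corrupting pairs $(A_8,B_8)$, $(A_9,B_9)$ with seven computationally verified properties, corrupted products with allowable shifts, a cycle switch to destroy the single surviving subsquare $\beta_M$, the invariant conditions $(i)$--$(iii)$ carried through the recursion, ten explicit base cases, and the case analysis of Lemmas~\ref{l:nearrow}--\ref{l:Qii} --- none of which is supplied or replaced by an alternative. Moreover, the specific recursion you propose (order $m\to 2m$ or $3m$) is not merely unexecuted but likely unworkable with this kind of gadget: corrupting pairs do not exist below order $7$, which is precisely why the paper multiplies by $8$ and $9$ and compensates with a larger set of base cases covering all residues of $(x,y)$ modulo $(3,2)$. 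Finally, the $N_\infty$ cubes of orders $4$ and $6$ are asserted to be ``supplied separately'' but never exhibited; their existence (especially at order $6$) is a nontrivial finite fact established in the paper by exhaustive search. As it stands the proposal is a correct map of the proof, not a proof.
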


The $d=2$ case of \tref{t:ninfhyper} resolves Hilton's conjecture.

The structure of this paper is as follows. In \sref{s:back} we present
some background material and motivation. In
\sref{s:main} we resolve the existence problem of $N_\infty$ Latin
squares by constructing $N_\infty$ Latin squares of orders of the form
$2^x3^y$.  In \sref{s:main2} we extend our results from \sref{s:main}
to prove \tref{t:ninfhyper}. Finally, in \sref{s:conc} we give some
brief concluding remarks.

\section{Background}\label{s:back}

In this section we motivate the study of $N_\infty$ Latin squares. We also introduce some material needed to prove \tref{t:ninfhyper}.

A Latin square is called $N_2$ if it contains no intercalates. It is
known~\cite{DKII,n2conj, kotzturg, mcleishn2,mcleishn2corr} that an $N_2$
Latin square of order $n$ exists if and only if $n \not\in \{2,
4\}$. One of the early motivations to study $N_2$ Latin squares was a
connection with disjoint Steiner triple systems~\cite{n2conj}. More
recently, $N_2$ Latin squares have been shown to be very
rare~\cite{KSS22,KSSS22,KS18,lotssub}. No estimates have been
proved for the proportion of Latin squares that are
$N_\infty$. However, $N_\infty$ is a strictly stronger property than
$N_2$, and hence very rarely achieved among Latin squares. This,
together with the fact that most direct and recursive construction
techniques inherently create subsquares, accounts for why Hilton's
conjecture has defied solution until this point.

Another reason to study $N_\infty$ Latin squares is due to their
connection with so called perfect $1$-factorisations of graphs. A
\emph{$1$-factor} of a graph $G$ is a subset $H$ of the edges of $G$
so that every vertex of $G$ is incident to exactly one edge in $H$. A
\emph{$1$-factorisation} of $G$ is a partition of the edges of $G$
into $1$-factors. Any pair of distinct $1$-factors in a
$1$-factorisation $F$ induces a $2$-regular subgraph of $G$. If this
subgraph is a Hamiltonian cycle in $G$, regardless of the pair of
$1$-factors, then $F$ is called a \emph{perfect
	$1$-factorisation}. Much work has been done on constructing perfect
$1$-factorisations of complete graphs and complete bipartite
graphs. Let $n$ be an odd integer. An $N_\infty$ Latin square of order
$n$ can be constructed from a perfect $1$-factorisation of the
complete graph $K_{n+1}$, or from a perfect $1$-factorisation of the
complete bipartite graph $K_{n, n}$.
It is not necessarily true that an $N_\infty$ Latin square of order
$n$ implies the existence of a perfect $1$-factorisation of $K_{n, n}$
or $K_{n+1}$. Indeed, the Latin squares built from perfect
$1$-factorisations have an even stronger property than $N_\infty$;
namely they do not contain Latin rectangles other than those
consisting of entire rows of the Latin square.  For further details,
see \cite{WI05}.

We now present some material regarding Latin squares that we require
in order to prove \tref{t:ninfhyper}. Unless otherwise stated, the
rows and columns of a matrix of order $n$ will be indexed by $[n]$,
and the symbol set will be $[n]$. When dealing with the set $[n]$, all
calculations will be modulo $n$.  Let $L$ be a matrix of order $n$. We
can think of $L$ as a set of $n^2$ triples of the form $(\text{row},
\text{column}, \text{symbol})$. We will sometimes use set notation for
matrices, e.g. if $L$ contains the triple $(1,1,1)$ then we will
write $(1,1,1) \in L$. Each triple of $L$ is called an
\emph{entry}. The entry $(i, j, k)$ occurs in \emph{cell} $(i, j)$ of
$L$. We also write $L[i, j] = k$. The \emph{principal entry} of $L$ is
the entry in cell $(1,1)$. Let $M$ be another matrix (not necessarily
of order $n$), and let $S$ be a set of entries of $M$. Suppose that
each entry in $S$ is in a cell $(i, j)$ for some $\{i, j\} \subseteq[n]$.
Then the \emph{shadow of $S$ in $L$} is the set of entries
$\{(i, j, L[i, j]) : (i, j, M[i, j]) \in S\}$.

Let $L$ be a Latin square. Any Latin square that can be obtained from $L$ by permuting its rows, permuting its columns and renaming its symbols is said to be \emph{isotopic} to $L$. Any Latin square that can be obtained from $L$ by uniformly permuting the coordinates of each entry of $L$ is said to be a \emph{conjugate} of $L$. Each Latin square has six (not necessarily distinct) conjugates. The \emph{species} of $L$ is the set of Latin squares that are isotopic to some conjugate of $L$. The $N_\infty$ property is a species invariant. The concept of a species generalises naturally to Latin hypercubes.

Let $L$ be a Latin square of order $n$, let $\{i, j\} \subseteq [n]$
and let $\sigma$ be any symbol other than $L[i, j]$. The matrix obtained from
$L$ by replacing the entry $(i, j, L[i, j])$ by $(i, j, \sigma)$ is
denoted by $\sigma \inc L[i, j]$. Such a matrix is called a \emph{near
	copy} of $L$. We stress that $\sigma$ may or may not be a member of
$[n]$, but if it is then the near copy will contain two copies of
$\sigma$ in row $i$ and column $j$.  More generally, let $k \leq n^2$
and let $\{(x_i,y_i) : i \in [k]\}$ be a set of $k$ distinct cells in
$L$. Also let $\sigma_i$ be a symbol other than $L[x_i, y_i]$ for each
$i \in [k]$. Let $L'$ be obtained from $L$ by replacing each entry
$(x_i, y_i, L[x_i, y_i])$ by $(x_i,y_i,\sigma_i)$. Then $L'$ is called
a \emph{$k$-near copy} of $L$. In particular, we can say that $L$ is a
$0$-near copy of $L$, and a $1$-near copy of $L$ is simply a near copy
of $L$. When considering $k$-near copies of Latin squares we will
still use Latin square terminology such as subsquares.  The entries
$\{(x_i,y_i,\sigma_i):i\in[k]\}$ are called the \emph{alien entries}
of $L'$ with respect to $L$. If there is no ambiguity as to the matrix
$L$ then we will simply call these entries the alien entries of
$L'$. If $T$ is a submatrix of $L'$ that contains an alien entry
$\pi$ of $L'$ with respect to $L$ then we will say that $\pi$ is an
alien entry of $T$ with respect to $L$. We adopt the same convention
for the following definitions. All entries of $L'$ that are not alien
entries are called the \emph{native entries} of $L'$. Every symbol in
a native entry of $L'$ is called a \emph{native symbol} of $L'$. The
cells $\{(x_i, y_i) : i \in [k]\}$ are called the \emph{holes} in
$L'$. The symbol $L[x_i, y_i]$ is called the the \emph{displaced
	native} from the hole $(x_i,y_i)$.

It is well known that any proper subsquare of a Latin square $L$
cannot be bigger than half the order of $L$. However, this is not true
in near copies of Latin squares, as demonstrated by the shaded subsquare in
\[
\left[\begin{array}{ccc}
	\mk1&\mk2&3\\
	\mk2&\mk1&1\\
	3&1&2\\
\end{array}
\right].
\]
Nevertheless, subsquares in near copies cannot be much bigger than half the
order of the parent Latin square.

\begin{lem}\label{l:nolargesubsq}
	Let $M$ be a near copy of a Latin square $L$ of order $n>1$. Suppose
	that $S$ is a subsquare of $M$ of order $s$. Then $s\le(n+1)/2$.
\end{lem}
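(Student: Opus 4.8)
The plan is to track the single alien entry of $M$ and then reduce to the classical bound, noted above, that a proper subsquare of a Latin square has order at most half its order. Write the unique hole of $M$ as the cell $(r_0,c_0)$, with alien symbol $\sigma = M[r_0,c_0]$ and displaced native $\tau = L[r_0,c_0]$, so $\sigma \neq \tau$. Let $R$, $C$ and $\Sigma_S$ be the rows, columns and symbol set of $S$, so that $|R|=|C|=|\Sigma_S|=s$. The case $s=1$ is immediate, so assume $s\ge 2$. Note also $s<n$: if $s=n$ then $R=C=[n]$ and $S=M$, but $M$ is not a Latin square of order $n$ (if $\sigma\in[n]$ its changed entry repeats a symbol in line $r_0$, and otherwise $M$ uses $n+1$ symbols). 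I split on whether the hole lies in $S$.

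If $(r_0,c_0)\notin R\times C$, then $M$ and $L$ agree on every cell of $S$, so $S$ is a genuine subsquare of $L$; since $s<n$ it is proper, and the classical bound gives $s\le n/2\le (n+1)/2$.

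Now suppose $(r_0,c_0)\in R\times C$, so $r_0\in R$, $c_0\in C$, and $S[r_0,c_0]=\sigma$. First, if $\sigma\notin[n]$ then $\sigma$ occurs in $S$ only in its single alien entry, contradicting the fact that $\sigma\in\Sigma_S$ must appear $s\ge 2$ times in the Latin square $S$; hence $\sigma\in[n]$ and $\Sigma_S\subseteq[n]$. Next I claim $\tau\notin\Sigma_S$. Otherwise $\tau$ would occur in row $r_0$ of $S$ at some native cell $(r_0,c')$ with $c'\in C\setminus\{c_0\}$, giving $L[r_0,c']=\tau=L[r_0,c_0]$ and contradicting that $L$ is Latin. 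With this in hand I work inside $L$ and restrict to the rows $R$. For each $\rho\in\Sigma_S\setminus\{\sigma\}$ the $s\times s$ block $L|_{R\times C}$ contains $\rho$ exactly $s$ times, as in $S$, since the only cell where $L$ and $S$ differ carries $\tau\notin\Sigma_S$; thus all $s$ occurrences of $\rho$ in the rows $R$ already lie in the columns $C$. By contrast $\sigma$ occurs $s-1$ times in $L|_{R\times C}$ and hence exactly once among the remaining columns. Consequently the $s\times(n-s)$ block $B=L|_{R\times([n]\setminus C)}$ uses symbols only from $([n]\setminus\Sigma_S)\cup\{\sigma\}$, a set of size $n-s+1$. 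Each of the $n-s\ge 1$ columns of $B$ consists of $s$ distinct symbols of $L$, so $s\le n-s+1$, that is, $s\le(n+1)/2$.

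The heart of the argument, and the step I expect to be the main obstacle, is the combination of the claim $\tau\notin\Sigma_S$ with the precise bookkeeping inside $L|_{R\times C}$: together these show that exactly one symbol, namely $\sigma$, can leak out of the $s\times s$ block into $B$, which is precisely what relaxes the classical bound $s\le n/2$ to $s\le(n+1)/2$. The degenerate possibility $\sigma\notin[n]$ must be dispatched separately, as must the verification that $s<n$.
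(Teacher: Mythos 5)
Your proof is correct and rests on the same counting idea as the paper's: all but (at most) one symbol of $S$ — the alien one — is confined to the rows and columns of $S$, so the complementary block is too small unless $s-1\le n-s$. The paper runs this count in $M$ on the $(n-s)\times(n-s)$ block avoiding both the rows and columns of $S$ and dispenses with your case split, but the mechanism and the resulting inequality are the same.
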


\begin{proof}
	Let $T$ be the submatrix of $M$ induced by the rows and columns that
	do not hit $S$. Note that $T$ is not empty, because $M$ is not a
	Latin square.  Suppose that $\sigma$ is any symbol that occurs in
	$S$ and does not occur in the alien entry in $M$ with respect to
	$L$. Then $\sigma$ must occur $s$ times in $S$ and $n-s$ times in
	$T$. We have at least $s-1$ choices for $\sigma$, but we only have
	room for $n-s$ of them within $T$.  Hence $s-1\le n-s$, as required.
\end{proof}

Another simple result we will need is the following. It is a
restatement of a well known result on the smallest Latin trade.

\begin{lem}\label{l:mintrade}
  Let $L$ and $M$ be distinct Latin squares on the same set of
  symbols.  If $M$ is a $k$-near copy of $L$ then $k\ge4$, with
  equality only possible if $L$ and $M$ both contain an intercalate.
\end{lem}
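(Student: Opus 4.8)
The plan is to work directly with the set $D$ of cells in which $L$ and $M$ disagree. By hypothesis these are exactly the $k$ holes of $M$, so $|D|=k$, and since $L,M$ are distinct we have $k\ge1$. The whole argument rests on one elementary observation: for any fixed row $i$, the restrictions of $L$ and $M$ to that row are both permutations of the common symbol set, and they agree in every cell outside $D$. Hence if row $i$ meets $D$ at all, it must meet it in at least two cells, because two permutations of a finite set cannot differ in exactly one position. The identical statement holds for every column. So every line (row or column) meets $D$ in either $0$ or at least $2$ cells.

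From this I would extract the bound $k\ge4$ immediately. Pick an occupied row; it contains at least two cells of $D$, lying in two distinct columns, each of which is therefore itself occupied and so contains at least two cells of $D$. In particular at least two rows are occupied, and if $r$ denotes the number of occupied rows then $k=\sum_{\text{rows}}|D\cap\text{row}|\ge 2r\ge4$.

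For the equality case $k=4$ the same counting forces $r=2$ with every occupied row, and symmetrically every occupied column, meeting $D$ in exactly two cells; hence $D$ is precisely a $2\times2$ grid on two rows $i_1,i_2$ and two columns $j_1,j_2$. I would then pin down the entries. Writing the four symbols of $L$ on this grid and using that $M$ differs from $L$ in each cell while each row of $M$ stays a permutation, the two symbols in each row are forced simply to swap; imposing the permutation condition on a column then forces the two rows to carry the same pair of symbols in opposite order. This makes the $L$-subarray an intercalate, and the same computation exhibits the $M$-subarray as the complementary intercalate on the same two cells and symbols, so both $L$ and $M$ contain an intercalate.

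The substantive content lies entirely in the opening observation; once each line is known to meet $D$ in $0$ or at least $2$ cells, the lower bound is automatic. The only place demanding any care is the equality analysis, where the row and column permutation constraints must be tracked simultaneously to rule out every $2\times2$ configuration except a pair of opposite intercalates. This short case check is the main, though quite modest, obstacle.
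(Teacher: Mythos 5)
Your proof is correct. The paper does not actually prove this lemma --- it is stated as ``a restatement of a well known result on the smallest Latin trade'' --- and your argument (each row and column meets the disagreement set $D$ in $0$ or at least $2$ cells, hence $k=|D|\ge 2\cdot 2=4$, with the equality case forcing $D$ to be a $2\times2$ grid carrying a pair of complementary intercalates) is precisely the standard proof of that fact, so nothing further is needed.
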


Let $L$ be a Latin square of order $n$ and let $M$ be a Latin square of order $m$. The \emph{direct product} of $L$ and $M$, denoted by $L \times M$, is a Latin square of order $nm$ whose rows and columns are indexed by $[n] \times [m]$. It is defined by $(L \times M)[(i, j), (x, y)] = (L[i, x], M[j, y])$.
There are two natural ways of ordering the rows and columns of $L \times M$. The first way is to use the order $\prec_1$ on $[n] \times [m]$, where we order by the first coordinate and use the second coordinate to break ties. When ordering in this way, $L \times M$ decomposes into $n^2$ blocks, each of which is isotopic to $M$. These are known as \emph{$M$-blocks}. The second way is to use the order $\prec_2$ on $[n] \times [m]$, where we order by the second coordinate and use the first to break ties. 
With this ordering the square $L \times M$ decomposes into $m^2$ blocks, each of which is isotopic to $L$. These are known as \emph{$L$-blocks}. We will refer to $M$-blocks and $L$-blocks collectively as \emph{blocks}. Let $\Phi_1 : [n] \times [m] \to [n]$ denote the projection onto the first coordinate, and let $\Phi_2 : [n] \times [m] \to [m]$ denote the projection onto the second coordinate. Let $S$ be a submatrix of $L \times M$. The \emph{projection of $S$ onto $L$}, denoted by $\Phi_1(S)$, is the set of triples $\{(\Phi_1(r), \Phi_1(c), \Phi_1(s)) : (r, c, s) \in S\}$. Similarly, the \emph{projection of $S$ onto $M$}, denoted by $\Phi_2(S)$, is the set of triples $\{(\Phi_2(r), \Phi_2(c), \Phi_2(s)) : (r, c, s) \in S\}$. The projections of $S$ onto the first and second coordinates can be defined on any matrix whose row indices, column indices and symbols are $[n] \times [m]$.
Let $T$ be an $M$-block of $L \times M$. Then $\Phi_1(T)$ consists of a single entry of $L$, say $(i, j, L[i, j])$. The \emph{position} of $T$ is then defined to be $(i, j)$. The position of an $L$-block is defined similarly using $\Phi_2$. The $M$-block in position $(1,1)$ is called the \emph{principal $M$-block} of $L \times M$ and the $L$-block in position $(1,1)$ is called the \emph{principal $L$-block} of $L \times M$. The principal entry of the $M$-block in position $(i, j)$ is the entry of $L \times M$ in cell $((i, 1), (j, 1))$. The principal entry of the $L$-block in position $(i, j)$ is the entry of $L \times M$ in cell $((1,i), (1,j))$.

In order to prove \tref{t:ninfhyper} we need the so called \emph{corrupted product} defined in~\cite{1subsq}. Let $A$ be an $N_\infty$ square of order $\alpha$ and let $B$ be a square isotopic to $A$, with the same symbol set as $A$. The pair $(A, B)$ is a \emph{corrupting pair} of order $\alpha$ if:
\begin{itemize}
	\item $A[i, j] = B[i, j]$ if and only if $i=j=1$, and,
	\item for all $\{i, j\} \subseteq [\alpha]$, there is no proper subsquare of $B[i, j] \inc A[i, j]$ involving the principal entry.
\end{itemize}

Let $(A, B)$ be a corrupting pair of order $\alpha$, let $M$ be an
$N_\infty$ square of order $\mu$ and let $s \in [\mu-1]$. The
\emph{corrupted product} $P = (A, B) *_s M$ of shift $s$, whose rows
and columns are indexed by $[\alpha] \times [\mu]$, is defined by,
\[
P[(i, j), (k, l)] = \begin{cases}
	(A[i, k], M[j, l]+s) & \text{if } i=k=1,\\
	(B[i, k], M[j, l]) & \text{if } j=l=1 \text{ and } (i, k) \neq (1,1), \\
	(A[i, k], M[j, l]) & \text{otherwise}.
\end{cases}
\] 
We can obtain $P$ from the direct product $A \times M$ as follows. First, replace the principal $A$-block of $A \times M$ by the principal $B$-block of $B \times M$. Then add $s$ to the $M$-coordinate of each symbol in the principal $M$-block. See~\cite{1subsq} for a more detailed description of corrupted products.
When discussing corrupted products and other Latin squares that can be obtained from direct products by a small number of perturbations, we will use the terminology such as blocks, positions and projections that we introduced for direct products. So each $A$-block of $P$ is a near copy of a Latin square that is isotopic to $A$. The principal $M$-block of $P$ is a subsquare of $P$, which we will denote by $\beta_M$. Any other $M$-block of $P$ is a near copy of a Latin square that is isotopic to $M$.
If the matrix $M[i, j]+s \inc M[i, j]$ does not contain a subsquare isotopic to $A$ for any $\{i, j\} \subseteq [\mu]$, then $s$ is called an \emph{allowable shift} with respect to $(A, M)$. If $A$ is of order $\alpha$ and $M[i, j]+s \inc M[i, j]$ does not contain a subsquare of order $\alpha$ for any $\{i, j\} \subseteq [\mu]$ then $s$ is called a \emph{strong allowable shift} with respect to $(A, M)$. We can now state the following result from \cite{1subsq}, which is our motivation for discussing corrupted products.

\begin{thm}\label{t:1sub}
	Let $(A, B)$ be a corrupting pair of order $\alpha$, let $M$ be an
	$N_\infty$ square of order $\mu>\alpha$ and let $s \in [\mu-1]$. If
	$s$ is an allowable shift with respect to $(A, M)$, then the only
	proper subsquare of the corrupted product $(A, B) *_s M$ is $\beta_M$.
\end{thm}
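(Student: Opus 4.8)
The plan is to take an arbitrary proper subsquare $S$ of $P = (A,B)*_s M$ and show that it must equal $\beta_M$. Write the rows and columns of $S$ as $R, C \subseteq [\alpha]\times[\mu]$; since a subsquare is the full submatrix on its own rows and columns, $S$ is exactly the cell set $R \times C$. For each index I record the $A$-parts $R_1 = \{i : (i,j)\in R\}$, $C_1 = \{k : (k,l)\in C\}$ and the $M$-parts $R_2, C_2$ analogously. The two perturbations are localised: the principal $M$-block $\beta_M$ (all cells whose row and column both have $A$-index $1$) carries the shifted symbols, while the principal $A$-block $\beta_A$ (all cells whose row and column both have $M$-index $1$) carries the $B$-symbols, except at the corner $((1,1),(1,1))$ where $A$ and $B$ agree. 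Because $S = R\times C$, one checks immediately that $S$ meets $\beta_M$ if and only if $1 \in R_1$ and $1 \in C_1$, and that $S$ meets $\beta_A$ if and only if $1 \in R_2$ and $1 \in C_2$.

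The first tool I would establish is a projection lemma. Away from the perturbed coordinates, $\Phi_1(S) = A[R_1,C_1]$ and $\Phi_2(S) = M[R_2,C_2]$. A short symbol-count shows this really gives subsquares of the factors: in any row of $S$ the $A$-coordinates of the symbols are exactly $\{A[i,k] : k \in C_1\}$, and since each row realises the full symbol set $\Sigma_S$ this set is independent of the row, forcing $A[R_1,C_1]$ to be Latin; the shift does not interfere because it alters only $M$-coordinates. Thus, provided $S$ carries no $B$-symbol, $\Phi_1(S)$ is a subsquare of $A$, and provided $S$ carries no shifted symbol, $\Phi_2(S)$ is a subsquare of $M$. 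As $A$ and $M$ are $N_\infty$, each such projection is trivial or full, so $|R_1|=|C_1|\in\{1,\alpha\}$ and $|R_2|=|C_2|\in\{1,\mu\}$.

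These facts dispatch the easy configurations. If $S$ meets neither $\beta_M$ nor $\beta_A$ then on its cells $P$ agrees with $A\times M$, so $S$ is a subsquare of the genuine product; but every $A$-block of $A\times M$ meets $\beta_M$, every $M$-block meets $\beta_A$, and any other subsquare has $1\in R_1\cap C_1$ by the same projection dichotomy and hence meets $\beta_M$ as well, a contradiction. If instead $R_1 = C_1 = \{1\}$ then $S\subseteq \beta_M$, so $\Phi_2(S)$ is a subsquare of the $N_\infty$ square $M$ (up to the relabelling $+s$); being proper of order at least two, it must be full, whence $S = \beta_M$, exactly as desired. Subsquares trapped inside a single non-principal block are excluded directly by \lref{l:nolargesubsq}, since such a block is a near copy of an isotope of $A$ or $M$ and so admits no subsquare as large as its whole order.

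The heart of the argument is the straddling case, where $S$ meets $\beta_M$ but is not contained in it (the analysis at $\beta_A$ is symmetric). Here the projection lemma forces $|R_1| = |C_1| = \alpha$. I would then track the symbol whose $A$-coordinate is $A[1,1]$: its occurrences inside the $A$-position-$(1,1)$ block form a $q\times q$ array whose $M$-coordinates constitute a subsquare $M[J,L]$ of $M$, so $N_\infty$-ness of $M$ gives $q = \mu$ (whence $\beta_M\subseteq S$) or $q = 1$. In the case $q=1$, $S$ meets $\beta_M$ in the single cell $((1,j_0),(1,l_0))$, so after reducing the block multiplicities $|R^{(i)}|,|C^{(k)}|$ to one the square $S$ is isotopic to $A$ and $\Phi_2(S)$ is a subsquare isotopic to $A$ sitting inside the one-cell near copy $M[j_0,l_0]+s \inc M[j_0,l_0]$, contradicting that $s$ is an allowable shift. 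Symmetrically, a $B$-straddle produces a proper subsquare through the principal entry of some $B[i_0,k_0]\inc A[i_0,k_0]$, contradicting the definition of a corrupting pair; \lref{l:mintrade} helps bound the number of alien cells, since any genuine change between $S$ and the submatrix of $A\times M$ on its rows and columns must involve at least four cells. The main obstacle I anticipate is precisely this last step: organising the reductions so that the perturbation seen by the relevant projection collapses to the single cell required by the "allowable shift'' and "corrupting pair'' hypotheses, while simultaneously disposing of the $q=\mu$ branch and of the case in which $S$ meets $\beta_M$ and $\beta_A$ at once.
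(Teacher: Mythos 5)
The first thing to note is that the paper does not prove this statement at all: Theorem~\ref{t:1sub} is imported verbatim from \cite{1subsq}, so there is no in-paper proof to compare against line by line. Judged on its own terms, and against the closely analogous in-paper arguments (\lref{l:blocksP}, \lref{l:q2} and their companions, which confront the same difficulties for the switched square $Q$), your outline has the right skeleton: project onto the two factors, invoke the $N_\infty$ property of $A$ and $M$, and reserve the ``allowable shift'' and ``corrupting pair'' hypotheses for the two straddling configurations. But the gaps sit exactly where the real work lies.

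Concretely: first, the claim that subsquares contained in a single non-principal block ``are excluded directly by \lref{l:nolargesubsq}'' is a non sequitur. That lemma only bounds the order of a subsquare of a near copy by roughly half the order of the block, so it says nothing about an intercalate or other small subsquare sitting inside a block. The case is still true, but for a different reason: a subsquare avoiding the block's unique alien cell projects to a proper subsquare of $A$ or $M$, while a subsquare containing the alien cell would have to repeat the alien symbol, which occurs only once in that block. Second, and more seriously, the $q=1$ branch of the $\beta_M$-straddle is resolved by fiat: ``after reducing the block multiplicities $|R^{(i)}|,|C^{(k)}|$ to one'' assumes precisely what must be proved, namely that $S$ hits every $M$-block at most once. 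Without that, $\Phi_2$ is not injective on the rows and columns of $S$, and $\Phi_2(S)$ need not be a subsquare of $M[j_0,l_0]+s \inc M[j_0,l_0]$ at all. (A count of how many symbols of $\Sigma_S$ carry each $A$-coordinate, taken along rows of $S$ with all $\alpha$ possible $A$-indices, can be made to force every block multiplicity to equal $q$; but this is the crux of the argument, and it is genuinely complicated by rows and columns of $S$ that pass through the principal $A$-block.) Third, the configuration in which $S$ meets both the principal $M$-block and the principal $A$-block --- which is exactly where the two corruptions interact, where your deduction that $|R_1|=|C_1|=\alpha$ is no longer clean, and where the second clause in the definition of a corrupting pair earns its keep --- is explicitly left unresolved, as is the small case $S\subseteq\beta_A$. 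You correctly identify these as the obstacles, but they are the substance of the theorem rather than loose ends, so what you have is an accurate plan rather than a proof.
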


\medskip

Let $L$ be a Latin square of order $n$. For each $\{i,j\}\subseteq[n]$
with $i \neq j$, the permutation mapping row $i$ to row $j$, denoted
by $\tau_{i, j}$, is defined by $\tau_{i, j}(L[i, k]) = L[j, k]$ for all
$k\in[n]$. Such permutations are called \emph{row permutations} of
$L$. Let $\rho$ be a cycle in $\tau_{i, j}$  and in row $i$ (or row $j$)
let the set of columns containing the symbols involved in $\rho$ be $C$.
The set of entries
in cells $\{i, j\} \times C$ is called a
\emph{row cycle} of $L$. The \emph{length} of this row cycle is $|C|$.  Denote
by $\rho(i,j,c)$ the row cycle induced by the cycle in $\tau_{i, j}$ that
hits column $c$. Column cycles and symbol cycles can be
defined similarly to row cycles.  These cycles can be used to create new
Latin squares from old ones, in a method known as cycle
switching \cite{cycswitch}.
Suppose that there is a row cycle $\rho(i, j, c)$ of $L$,
and let $C$ be the set of columns hit by this row cycle. A new
Latin square $L'$ can be defined by
\[
L'[x, y] = \begin{cases}
	L[i, y] & \text{if } x=j \text{ and } y \in C, \\
	L[j, y] & \text{if } x=i \text{ and } y \in C, \\
	L[x, y] & \text{otherwise}.
\end{cases}
\]
We will say that $L'$ has been obtained from $L$ by \emph{switching}
on the cycle $\rho(i, j, c)$.

To prove \tref{t:ninfhyper} we will first resolve Hilton's conjecture by constructing an $N_\infty$ Latin square of order $n$ for any $n \not\in \{4,6\}$ of the form $2^x3^y$ with $x \geq 1$ and $y \geq 0$. The construction is recursive and will work as follows. Given an $N_\infty$ Latin square of order $\mu$, we use corrupted products to construct Latin squares of order $8\mu$ and $9\mu$ that contain exactly one proper subsquare. We then use cycle switching to destroy this subsquare, in such a way as to not create any new subsquares.

We now describe another trade which can be used to create new Latin squares from old ones, which is similar to cycle switching. This method will only be used to construct $N_\infty$ squares that we need as the base cases for our recursive construction. 
Let $L$ be a Latin square of order $n$. Suppose that there are three distinct rows $i, j$ and $k$, distinct columns $x$ and $y$, and symbols $a$ and $b$ of $L$ such that: $L[i, x] = a = L[k, y]$, $L[i, y] = b = L[j, x]$ and $b$ is contained in the cycle of the row permutation $\tau_{j, k}$ of $L$ that contains $a$. Write this cycle as $(a, z_1,z_2,\ldots, z_\ell, b, \ldots)$. Let $c_0 \in [n]$ be such that $L[j, c_0] = a$ and for $w \in [\ell]$ let $c_w \in [n]$ be such that $L[j, c_w] = z_w$. A Latin square $L'$ can be defined by,
\[
L'[u, v] = \begin{cases}
	b & \text{if } (u, v) \in \{(i, x), (k, y), (j, c_\ell)\}, \\
	a & \text{if } (u, v) \in \{(i, y), (j, x), (k, c_0)\}, \\
	z_w & \text{if } (u, v) \in \{(j, c_{w-1}), (k, c_w)\}, w \in [\ell], \\
	L[u, v] & \text{otherwise}.
\end{cases}
\]
We will let $\eta(i, j, x)$ denote the set of entries in cells $$\{(i, x), (i, y), (j, x), (k, y)\} \cup \{(j, c_w), (k, c_w) : w \in [\ell] \cup \{0\}\},$$ 
and we will say that $L'$ has been obtained from $L$ by switching on $\eta(i, j, x)$.

Consider \eref{e:E} below. The highlighted symbols of this Latin square form $\eta(4,7,2)$. Switching on $\eta(4,7,2)$ involves swapping each highlighted symbol with the other highlighted symbol in the same column.
\begin{align}\label{e:E}
	\setlength{\arraycolsep}{0pt}
	\begin{array}{rc}
	  &
	  \setlength{\arraycolsep}{6pt}  
		\begin{array}{ccccccccc}
			\!\!c_0\,\,&\!\!x\,\,&&\,y\!&c_1&\phantom{0}&\phantom{0}&\phantom{0}\\
		\end{array}
		\\[0ex]
		\begin{array}{c}
			\\
			k\\
			\\
			i\\
			\\
			\\
			j\\
			\\
		\end{array}
		&
		\setlength{\arraycolsep}{6pt}    
		\left[\begin{array}{ccccccccc}
		1&2&3&4&5&6&7&8\\
		\mk2&3&5&\mk7&\mk8&1&6&4\\
		3&1&8&5&6&4&2&7\\
		4&\mk7&1&\mk8&3&2&5&6\\
		5&6&7&1&4&3&8&2\\
		6&4&2&3&7&8&1&5\\
		\mk7&\mk8&4&6&\mk2&5&3&1\\
		8&5&6&2&1&7&4&3\\
	\end{array}\right].
	\\
\end{array}
\end{align}

\section{Latin squares without subsquares}\label{s:main}

In this section we resolve Hilton's conjecture by proving the following theorem.

\begin{thm}\label{t:ninf}
There exists an $N_\infty$ Latin square of order $n$ for all
$n\not\in\{4,6\}$.
\end{thm}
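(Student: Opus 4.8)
The plan is to separate the orders into those already handled in the literature and the genuinely hard family $n=2^{x}3^{y}$ with $x\ge 1$. For every $n$ \emph{not} of this form an $N_\infty$ square is known to exist by the cited results, so I would first dispatch those orders and reduce to $n=2^{x}3^{y}$, $x\ge1$. Within this family the two values $n=4$ and $n=6$ genuinely admit no $N_\infty$ square and are exactly the orders excluded from the statement, while every admissible order below $256$ is supplied as a base case by \cite{1subsq,WanThesis}. It therefore suffices to produce $N_\infty$ squares for all admissible $n\ge256$, and the entire burden falls on one recursive step: \emph{given an $N_\infty$ square $M$ of order $\mu$, construct $N_\infty$ squares of orders $8\mu$ and $9\mu$}. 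Granting this step, a strong induction on $n$ finishes, since any admissible $n=2^{x}3^{y}\ge256$ can be written as $8\mu$ when $x\ge4$ and as $9\mu$ when $x\le3$ (which forces $y\ge4$); in either case $\mu$ is a strictly smaller admissible order with $\mu\ge32$, resolved by the base cases or by induction.

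For the recursive step with target $8\mu$ (respectively $9\mu$) I would fix a corrupting pair $(A,B)$ of order $\alpha=8$ (respectively $\alpha=9$). Since $\alpha$ is a small constant, suitable $A$ and $B$ can be exhibited explicitly---if necessary building the required $N_\infty$ square $A$ by the $\eta$-switching move introduced above---and the two defining conditions of a corrupting pair can then be checked directly. Next I would choose an allowable shift $s\in[\mu-1]$ with respect to $(A,M)$: a shift fails only when some near copy $M[i,j]+s\inc M[i,j]$ contains a subsquare isotopic to $A$, and a counting argument as in \cite{1subsq} shows that, because $\mu\ge32>\alpha$ throughout, such obstructions cannot exhaust all $\mu-1$ candidate shifts, so an allowable $s$ exists. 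Feeding $(A,B)$, $M$ and $s$ into \tref{t:1sub} yields the corrupted product $P=(A,B)*_s M$ of order $\alpha\mu$ whose \emph{only} proper subsquare is the principal $M$-block $\beta_M$, a subsquare of order $\mu$.

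The decisive and hardest step is to destroy $\beta_M$ by cycle switching without creating any new proper subsquare. I would identify a row cycle (or column cycle) of $P$ that meets $\beta_M$ and switch on it; choosing the cycle so that at least one cell of $\beta_M$ changes symbol already breaks $\beta_M$, and cycle switching returns a genuine Latin square of order $\alpha\mu$. The real difficulty, and the main obstacle of the whole argument, is proving that the switch introduces no replacement subsquare. To control this I would exploit the rigid block structure that $P$ inherits from $A\times M$ together with \lref{l:nolargesubsq} and \lref{l:mintrade}: a proper subsquare of a Latin square has order at most half the order (with \lref{l:nolargesubsq} bounding subsquares in the relevant near copies), and by \lref{l:mintrade} any set of cells in which two Latin squares differ has size at least four. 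Classifying the candidate subsquares by how they meet the $A$-blocks, the $M$-blocks, and the switched region, and playing these size and trade constraints against the block structure, should rule out every case and complete the recursive step, and hence the theorem.
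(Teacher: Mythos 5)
Your overall strategy is the paper's: reduce to $n=2^x3^y$, then recurse via corrupted products ($\alpha\in\{8,9\}$) followed by cycle switching to destroy $\beta_M$. But there is a genuine gap in the recursive step as you state it. You propose to prove: \emph{given any $N_\infty$ square $M$ of order $\mu$, produce $N_\infty$ squares of orders $8\mu$ and $9\mu$}, choosing an allowable shift by counting and then switching on some row cycle that meets $\beta_M$. This inductive hypothesis is too weak to push through. First, switching requires a row cycle of length $3$ passing through $\beta_M$ whose switch provably creates no new subsquares; such a cycle is not guaranteed to exist for an arbitrary $N_\infty$ square $M$ and arbitrary allowable shift $s$. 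The paper manufactures it by requiring $M$ to contain a specific six-entry configuration (Condition $(iii)$ of the set $\X$) whose symbols are tied to the shift $s$ (note the symbol $\sigma+s$ in \eref{e:proto3cyc}), which combines with Property $2$ of the corrupting pair to yield the length-$3$ cycle $\rho((1,r_1),(2,r_2),(3,c_3))$ in $P$. Second, ruling out new subsquares after the switch cannot be done from \lref{l:nolargesubsq}, \lref{l:mintrade} and the block structure alone: the proof of \lref{l:qninf} needs, e.g., that $M[r_2,c_3]\inc M[r_1,c_3]$ contains no intercalate and that $M[1,1],M[1,1]+s$ avoid the cycle's symbols --- conditions that are part of the invariant, not consequences of $N_\infty$-ness --- together with seven computationally verified properties of the specific pairs $(A_8,B_8)$ and $(A_9,B_9)$ and the structural Lemmas \ref{l:nearrow}--\ref{l:blocksP}.

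Consequently the induction must carry a strengthened invariant: membership of $(M,s)$ in $\X$, comprising a \emph{strong} allowable shift (Condition $(i)$), a spare length-$3$ row cycle (Condition $(ii)$, needed so that the output square again satisfies Condition $(iii)$ at the next level), and the proto-cycle of Condition $(iii)$. Your base-case plan also inherits this problem: the literature supplies $N_\infty$ squares of all admissible orders below $256$, but not squares verified to lie in $\X$; the paper instead hand-builds ten base cases $\{12,16,18,24,32,36,48,54,64,72\}$ with these extra properties and uses a decomposition table to write every $2^x3^y\ge12$ as $8^i9^kn'$ with $n'$ among them. Your final quantity bookkeeping ($8\mu$ when $x\ge4$, $9\mu$ otherwise) is fine, but without the strengthened invariant the recursive step you rely on is not established.
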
 

To do this we will need some preliminary lemmas. Let $L$ be an $n \times n$ matrix and let $R$ and $C$ be subsets of $[n]$. The submatrix of $L$ induced by the rows in $R$ and the columns in $C$ is denoted by $L[R, C]$. 
We will index the rows and columns of $L[R, C]$ by $R$ and $C$.

\begin{lem}\label{l:nearrow} 
Let $L$ be a near copy of a Latin square $L'$ with associated alien
entry $\pi$. Let $T$ be a submatrix of $L$ that does not contain
$\pi$. Suppose that $T$ is a $k$-near copy of some $N_\infty$ square
$N$, where $k\in\{0,1,2\}$. 
Suppose further that no symbol of an entry of $T$ that is alien with
respect to $N$ is native to $T$.
Also suppose that $L$ has a subsquare $S$ that meets $T$ in
at least two entries. Let $V = S\cap T$ and suppose that
\begin{itemize}
	\item $V$ has more than $k$ columns, and
	\item $V$ intersects a row $r$ of $L$ that contains none of the holes
	in $T$ with respect to $N$.
\end{itemize}
Then one of the following is true: 
\begin{itemize}
\item $V=T$,
\item $V$ has exactly $k$ rows and $k+1$ columns,
\item $k=2$,
  and the two alien entries of $T$ with respect to $N$ are
  $(x_1,y_1,\sigma_1) \in V$ and $(x_2,y_2,\sigma_2) \not\in V$. The shadow
  of $V$ is a subsquare of the matrix $\nu\inc N[x_1,y_1]$ where $\nu$
  is the displaced native from $(x_2,y_2)$
  Also, either $x_1=x_2$ and $\pi$ is in column $y_1$, or
  $y_1=y_2$ and $\pi$ is in row $x_1$.
	
\end{itemize}
\end{lem}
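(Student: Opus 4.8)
The plan is to analyze the subsquare $S$ by looking at its shadow under $N$ and tracking how the alien entries of $T$ (with respect to $N$) and the alien entry $\pi$ (with respect to $L'$) interact with $V = S \cap T$. The key principle is that $N$ is $N_\infty$, so any genuine subsquare appearing in $N$ must be trivial (order $1$ or $n$), and therefore any nontrivial subsquare-like structure we detect in $V$ must be ``powered'' by alien entries. First I would establish that $V$ is a combinatorial rectangle: since $S$ is a subsquare of $L$, its rows and columns are closed, and intersecting with $T$ (itself a submatrix) keeps $V$ supported on a set of rows $R_V$ and columns $C_V$. Because $S$ is Latin, the symbols appearing in $V$ are controlled; the hypothesis that $V$ meets a row $r$ free of holes lets me read off, within that row, a full set of $|C_V|$ distinct native symbols of $T$, which must coincide with symbols from $S$.

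**The main counting argument.** The heart of the proof is a counting/closure argument on the shadow of $V$ in $N$. I would form $V^\ast$, the shadow of $V$ under the map to $N$ (replacing each alien entry of $T$ by its displaced native $N[x,y]$, and keeping $\pi$'s cell honest). If $V$ had no alien entries at all, then $V^\ast = V$ would already be a subsquare of $N$; since $N$ is $N_\infty$, this forces $V = T$ (the only way a ``full'' sub-structure survives). The interesting cases arise when some of the $k$ alien entries of $T$ lie inside $V$. Using the hypothesis that \emph{no alien symbol of $T$ is native to $T$}, I can guarantee that replacing an alien entry by its displaced native genuinely changes a symbol, so the defect between $V$ and an honest subsquare of $N$ is measured exactly by which aliens fall in $V$. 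The condition ``$V$ has more than $k$ columns'' is what prevents the aliens from accounting for a whole column's worth of discrepancy, so the shadow is close to Latin and I can invoke a minimal-trade-type obstruction (\lref{l:mintrade}) to pin down the size: $V$ has exactly $k$ rows and $k+1$ columns is the tight configuration where a single row of aliens creates a near-copy with one extra column.

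**Isolating the $k=2$ case.** The third bullet is the delicate one and I expect it to be the main obstacle. Here both alien entries of $T$ matter: one, $(x_1,y_1,\sigma_1)$, lies in $V$ and one, $(x_2,y_2,\sigma_2)$, lies outside. The claim is that the shadow of $V$ is a subsquare of the \emph{near copy} $\nu \inc N[x_1,y_1]$, where $\nu$ is the displaced native from the \emph{other} alien's cell $(x_2,y_2)$. To see this I would argue that the two aliens must be ``linked'' --- sharing a row or a column --- so that deleting one alien's influence on $V$ forces the symbol $\nu$ to appear in cell $(x_1,y_1)$ of the shadow; this is precisely the single-symbol substitution $\nu \inc N[x_1,y_1]$. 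The geometric alternative ($x_1=x_2$ with $\pi$ in column $y_1$, or $y_1=y_2$ with $\pi$ in row $x_1$) comes from chasing where the displaced native $\nu$ can legally reappear: since $S$ is Latin and $\pi$ is the only alien of $L$ with respect to $L'$, the symbol $\nu$ that $S$ demands in $(x_1,y_1)$ must be supplied either by the alignment of the two aliens or by $\pi$ sitting in the compensating line. The hard part of the write-up is bookkeeping: carefully distinguishing native versus alien symbols in each relevant cell, confirming the shadow map is well-defined on all of $V$, and ruling out the degenerate possibilities so that exactly one of the three listed outcomes holds.
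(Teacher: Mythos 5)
There is a genuine gap at the heart of your argument. You assert that if $V$ contains no alien entries of $T$ then its shadow ``would already be a subsquare of $N$,'' and more generally you treat the shadow of $V$ as being ``close to Latin'' with defect measured only by which aliens fall in $V$. Neither claim is justified, and establishing something like them is the main work of the proof. The set $V=S\cap T$ is merely a submatrix: a symbol occurring in a row of $V$ is guaranteed to occur in every other row of $S$, but those occurrences may lie \emph{outside} $T$, so nothing a priori forces $V$ to be square or to contain each of its symbols in every row. The paper's proof spends most of its length closing exactly this gap: it first shows $|R|\le|C|$ by choosing a second hole-free row $r'$ that also avoids $\pi$ and arguing that the $|R|$ native symbols in a hole-free column of $V$ must reappear in row $r'$ \emph{inside} $V$ (this uses that $L$ has only the single alien entry $\pi$, so occurrences in $T$ and in $S$ coincide); it then shows $|R|=|C|=|\Sigma|$ by observing that a native symbol of $V$ missing from a column $c$ of $V$ forces $\pi$ into column $c$ with that symbol, and that this can happen for at most one symbol and one column. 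Your proposal contains no mechanism for controlling how symbols of $S$ escape $T$, which is precisely what the hypothesis that $L$ is a near copy of a Latin square is for.

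Two further points. First, the second bullet of the conclusion ($V$ has exactly $k$ rows and $k+1$ columns) is not something to be \emph{derived} by a counting or minimal-trade argument; in the paper it is simply the case assumed away at the outset (``assume $|R|\ne k$ or $|C|\ne k+1$''), and \lref{l:mintrade} plays no role in this lemma --- invoking it to ``pin down the size'' of $V$ does not work, since that lemma compares two genuine Latin squares on the same symbol set. Second, your sketch of the $k=2$ case has the right flavour (a symbol demanded by $S$ at a hole must be supplied either by $\pi$ or by the other hole, and $\pi$ cannot serve both the row and the column since $\pi\notin T$), but it cannot be executed until $|R|=|C|=|\Sigma|$ is in hand, and it also omits the final verification that $\nu$ is not duplicated in row $x_1$ or column $y_1$, which is needed to conclude that $\nu\inc N[x_1,y_1]$ restricted to the shadow is actually a Latin square.
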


\begin{proof}
Let $R$ be the set of rows of $V$ and let $C$ be the set of columns of $V$.
We will assume that $|R| \neq k$ or $|C| \neq k+1$, since otherwise the Lemma
holds.
Throughout this proof whenever we use the terms displaced native,
native symbol and hole, they will be with respect to $N$. Let $\Sigma$
be the set of native symbols of $T$ in $V$. Since $|C|>k$ there is
some column $c\in C$ that contains no holes in $T$.

Aiming for a contradiction, suppose that $|R| > |C|$. Since $|C|>k$
it follows that $|R| \geq k+2$ and $V$ must intersect at least two
rows that contain none of the holes in $T$. Let $r'$ be one such row
that does not contain $\pi$.  Each of the $|R|$ symbols in column
$c$ of $V$ is native to $T$, and hence must occur in row $r'$ in
$T$. But these symbols must also occur in row $r'$ of $S$, since $S$
is a Latin square. So there are at least $|R|$ symbols in row $r'$ in $V$,
which forces $|C|\geq|R|$.

We now show that $|R| = |C| = |\Sigma|$. Row $r$ of $T$ contains
only native symbols of $T$ and so $|\Sigma| \geq |C|$. Now suppose,
for a contradiction, that $|\Sigma| > |R|$ and hence there is some
symbol $\sigma \in \Sigma$ that does not occur in column $c$ of
$V$. But $\sigma$ does occur in column $c$ of $T$, say in row
$r''$. Since $S$ is a Latin square that contains column $c$ and
symbol $\sigma$ but not row $r''$, it follows that $\pi$ must occur
in column $c$ and have symbol $\sigma$. As $L$ contains only one
alien entry with respect to $L'$ we know that our choices for $c$
and $\sigma$ were both forced. It follows that
$|R|+1 = |\Sigma| \geq |C| = k+1\geq |R|$.
As we are assuming that $|R|\ne k$ or $|C|\ne k+1$, the only remaining
possibility is that $|R|=|C|=k+1$ and $|\Sigma|=k+2$.
Since $|\Sigma| > |C|$ it follows that there is a symbol
$\sigma'\in\Sigma$ that does not occur in row $r$ of $V$.
Transposing the argument we just used for $\sigma$, but applying it
to $\sigma'$, we deduce that $\pi$ must occur in row $r$.
But we know that $\pi$ occurs in column $c$, and so $\pi$ cannot
occur in row $r$ because $\pi \not\in T$. This contradiction implies
that $|R| = |C| = |\Sigma|$.

Consider the $|R|\times|R|$ submatrix $M$ of $N$ that is the shadow of
$V$. If $M$ contains exactly $|R|$ symbols then it is a subsquare of
$N$, so must be equal to $N$, as $N$ is $N_\infty$. In that case we
would have $V=T$, so we may assume that $V$ contains a hole $(x,y)$
such that $M[x,y]\notin\Sigma$.  Row $r$ contains no hole in $T$ and
thus there is a symbol $\theta\in\Sigma$ that occurs in row $r$ of $M$
but not in row $x$ of $M$. Since $S$ is a Latin square that contains
row $x$ and symbol $\theta$, we must have that either (i)~$\pi$ occurs
in row $x$ and contains symbol $\theta$ or (ii)~$\theta$ is the
displaced native from a hole $(x,y')$ in row $x$.  In the latter case,
$(x,y')$ is outside of $V$ because $\theta$ does not occur in row $x$
of $M$. Similar logic can be applied to show that
an analogue of options (i) or (ii) must also hold for columns.
However, $\pi$ cannot be in row
$x$ and also in column $y$ because $\pi\notin T$. Also there is at
most one hole other than $(x,y)$. We conclude that there must be two
holes, with the second hole lying in whichever of row $x$ and column
$y$ does not contain $\pi$. Let $\nu$ be the displaced native from
the hole that is not $(x,y)$ and consider the matrix $M'=\nu\inc M[x,y]$.
We note that the symbols in $M'$ must be precisely $\Sigma$, and that
no symbol is duplicated within any row or column of $M'$ with 
the possible exception that $\nu$ might occur
twice within row $x$ or within column $y$ (but not both).
A consequence is that each of the $|R|$ symbols in $\Sigma$ occurs
exactly $|R|$ times in $M'$. It then follows that $\nu$ cannot be
duplicated within row $x$ or column $y$, so $M'$ is a Latin square.
\end{proof}

We can now use \lref{l:nearrow} to prove the following result;
c.f.~\cite[Lemma $7$]{1subsq}.

\begin{lem}\label{l:nearnearcopy}
Let $L$ be a near copy of a Latin square $L'$ with associated alien
entry $\pi$. Let $T$ be a submatrix of $L$ that does not contain
$\pi$. Suppose that $T$ is a $k$-near copy of some $N_\infty$ square
$N$, where $k \in\{0,1,2\}$.  Suppose further that no symbol of an entry of $T$ that is alien with respect to $N$ is native to $T$.
Also suppose that $L$ has a subsquare $S$ that meets $T$ in at least
two entries. Let $V = S\cap T$, let $R$ be the set of rows of $V$ and let $C$ be the set of columns of $V$. Then,
\begin{itemize}
	\item If $k=0$ then $S$ contains $T$,
	\item If $k=1$ then let the alien entry in $T$ be $(r,c,\sigma)$.
	One of the following is true:
	\begin{enumerate}
		\item $S$ contains $T$,
		\item $R = \{r\}$ and $C = \{c, c'\}$ for some $c'$. Furthermore,
		$\pi$ is in column $c$ and has symbol $L[r, c']$,
		\item $R = \{r, r'\}$ for some $r'$ and $C = \{c\}$. Furthermore,
		$\pi$ is in row $r$ and has symbol $L[r', c]$,
		\item $R = \{r'\} \neq \{r\}$ and $C = \{c, c'\}$ for some
		$c'$. Furthermore, $\pi$ is in column $c'$, has symbol $L[r', c]$
		and $L[r', c']$ is the displaced native from the hole in $T$ in
		column $c$,
		\item $R = \{r, r'\}$ for some $r'$ and $C = \{c'\} \neq\{c\}$.
		Furthermore, $\pi$ is in row $r'$ and has symbol $L[r,c']$ and
		$L[r', c']$ is the displaced native from the hole in
		$T$ in row $r$.    
	\end{enumerate}
	\item If $k=2$ then one of the following is true:
	\begin{enumerate}
		\item $S$ contains $T$,
		\item $|R| \leq 3$ and $|C| \leq 3$ with $\min(|R|, |C|)<3$,
		\item $T$ has alien entries $(r, c, \sigma) \in V$ and
		  $(r',c',\sigma') \not\in V$.
                  The shadow of $V$ is a subsquare of the matrix
                  $\nu\inc N[r,c]$ where $\nu$ is the displaced native
                  from $(r',c')$. Also, either $r=r'$ and $\pi$ is in
                  column $c$, or $c=c'$ and $\pi$ is in row $r$.
	\end{enumerate} 
\end{itemize}
\end{lem}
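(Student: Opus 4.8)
The plan is to deduce \lref{l:nearnearcopy} from \lref{l:nearrow} by analysing what the conclusions of \lref{l:nearrow} force in each of the cases $k=0,1,2$. The two lemmas share nearly identical hypotheses, so my first step is to verify that the hypotheses of \lref{l:nearnearcopy} let me invoke \lref{l:nearrow}. The only genuine difference is that \lref{l:nearrow} carries two extra bullet-point assumptions: that $V$ has more than $k$ columns, and that $V$ meets a row $r$ containing no hole of $T$. So the structure of the proof will be: assume $S$ does \emph{not} contain $T$ (otherwise we are in conclusion~1 every time), and then argue that either those extra hypotheses hold---so \lref{l:nearrow} applies and I can translate its trichotomy into the stated cases---or else they fail, in which case I extract the conclusion directly.

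Next I would dispatch the three cases. For $k=0$, the square $N$ is genuinely $N_\infty$ and $T$ has no holes, so every row of $V$ is hole-free and the ``$V$ has more than $0$ columns'' condition is automatic once $V$ is nonempty; \lref{l:nearrow}'s options collapse (``exactly $k$ rows and $k+1$ columns'' becomes ``$0$ rows and $1$ column'', impossible since $V$ meets $T$ in at least two entries, and the $k=2$ option is vacuous), leaving only $V=T$, i.e. $S\supseteq T$. For $k=1$, with the single alien entry $(r,c,\sigma)$ and single hole, I would split on whether $V$ meets a hole-free row and has at least two columns. When \lref{l:nearrow} applies, its middle option gives $|R|=1$, $|C|=2$; I then use the position of $\pi$ (in row $x_1$ or column $y_1$) together with the Latin-square constraints on $S$ to pin down exactly which of configurations~2--5 occurs, reading off the forced symbols of $\pi$ and the displaced-native relations. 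The configurations where $V$ has only one column, or where $V$'s rows all meet the hole, must be handled by hand---these are precisely the scenarios excluded from \lref{l:nearrow}'s hypotheses, and a short direct count (a $2\times1$ or $1\times?$ intersection inside a subsquare) yields the remaining cases by symmetry of rows and columns.

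For $k=2$, I would again apply \lref{l:nearrow} whenever its hypotheses hold. Its third conclusion is exactly conclusion~3 here, so that transcribes directly. Its first conclusion gives $V=T\subseteq S$. Its middle conclusion gives $|R|=2$, $|C|=3$, which I must reconcile with conclusion~2's bound $|R|,|C|\le 3$ and $\min(|R|,|C|)<3$. The residual work is to show that when the extra hypotheses of \lref{l:nearrow} \emph{fail}---$V$ has at most $2$ columns, or every row of $V$ meets a hole---the intersection $V$ is still small enough to land in conclusion~2, possibly after transposing rows and columns to exploit the row/column symmetry of the whole setup. I expect \textbf{the main obstacle} to be the $k=2$ bookkeeping: there are two alien entries and up to two holes, and I must carefully track which of $S$'s Latin constraints are violated by the presence of $\pi$ versus by the alien entries, ensuring the stated symmetric dichotomy (``$r=r'$ and $\pi$ in column $c$, or $c=c'$ and $\pi$ in row $r$'') is exhaustive. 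The cleanest route is probably to observe that swapping the roles of rows and columns is an involution preserving all hypotheses, so I can prove each case up to that symmetry and halve the casework.
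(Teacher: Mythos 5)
Your overall strategy --- deduce the lemma from Lemma~\ref{l:nearrow} by checking when its two extra hypotheses hold and treating the residual configurations by hand --- is exactly the paper's, and your $k=0$ case and the broad outline of $k=1$ are sound. The genuine gap is in $k=2$. You claim that when the hypotheses of Lemma~\ref{l:nearrow} fail ($V$ has at most $2$ columns, or every row of $V$ meets a hole), the intersection ``is still small enough to land in conclusion~2, possibly after transposing.'' That is the wrong target. Failure of those hypotheses in both orientations only yields $\min(|R|,|C|)\le 2$; it says nothing about the larger dimension. The configurations $|R|\le 2$ with $|C|\ge 4$ (and their transposes) violate the bound $|C|\le 3$ in conclusion~2, are not covered by conclusions~1 or~3, and cannot be reached by the transposed lemma either, since in that regime the transposed hypothesis ``more than $k$ rows'' also fails. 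These configurations must be proved \emph{impossible}, and that is the one substantive argument this lemma requires beyond Lemma~\ref{l:nearrow}: the paper picks, among four columns of $V$, two that avoid the alien entries of $T$ and one that additionally avoids $\pi$, uses the native symbol $\nu=L[r_1,c_2]$ to force $S$ to contain the row of $T$ where $\nu$ sits in column $c_1$, and then counts native symbols of $V$ that must all appear in column $c_1$ to contradict $|R|\le 2$. Your plan contains no mechanism for this step.

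A secondary point: in the $k=1$ case, configuration~2 has $R=\{r\}$ equal to the row of the unique hole, so the ``hole-free row'' hypothesis of Lemma~\ref{l:nearrow} fails there and the lemma is silent; the bound $|C|=2$, the location of $\pi$, and the displaced-native conditions in configurations~4 and~5 all come from bespoke arguments (forcing a symbol of row $r$ or $r'$ to reappear in another column of $T$ and tracking where $S$ can absorb it), not from reading off Lemma~\ref{l:nearrow}'s middle option. Your ``short direct count'' gestures at this but would need to be carried out; as written it is a placeholder. Still, that part of the plan is recoverable, whereas the $k=2$ residual case as you have framed it would not close.
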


\begin{proof}
If $k=0$ then the claim is true by \lref{l:nearrow}.
Suppose that $k=1$.
Since $V$ contains at least two entries we know that either
$|R|\geq2$ or $|C| \geq 2$. If both $|R| \geq 2$ and $|C|\geq 2$ then
\lref{l:nearrow} implies that $S$ contains $T$.  We consider only the
case where $|R|=1$ and $|C| \geq 2$. The case where $|C|=1$ and
$|R|\geq2$ can be resolved by transposing our arguments.  First
suppose that $V$ contains the alien entry of $T$ with respect to
$N$. So we can write $R = \{r\}$ and we know that $c \in C$. Let
$c'\in C \setminus \{c\}$ and let $\nu = L[r, c']$. Since $T$ contains
only one hole it follows that $\nu$ does occur in column $c$ of $T$,
say in row $r'$. Since $S$ is a Latin square that contains column $c$
and symbol $\nu$ but not row $r'$ it follows that $\pi$ occurs in
column $c$ and has symbol $\nu$. Furthermore, our choice of $c'$ was
forced and hence $C = \{c, c'\}$.

Now we consider when $V$ does not contain the alien entry of $T$ with
respect to $N$.
First suppose that $R = \{r\}$ so that $c \not\in C$. 
Let $c_1$ and $c_2$ be distinct elements of $C$, and for $i \in [2]$ let $\nu_i = L[r, c_i]$. Without loss of generality $\pi$ does not occur in column $c_1$. Since $c \neq c_1$ it follows that $\nu_2$ occurs in column $c_1$ of $T$, say in row $r_1$. But $S$ is a Latin square that contains column $c_1$, symbol $\nu_2$ but not row $r_1$, which is a contradiction. This contradiction implies that $R \neq \{r\}$. Now consider when $R = \{r'\} \neq \{r\}$ and $|C| \geq 2$. Assuming that $T\not\subseteq S$, \lref{l:nearrow} implies that $|C|=2$ and so we can write $C = \{c_1,c_2\}$. For $i \in [2]$ let $\nu_i = L[r', c_i]$. Without loss of generality $c_2 \neq c$. We know that $\nu_1$ occurs in column $c_2$ of $T$, say in row $r_2$. Since $S$ contains column $c_2$ and symbol $\nu_1$ but not row $r_2$ it follows that $\pi$ occurs in column $c_2$ and has symbol $\nu_1$. If $c_1 \neq c$, or $c_1=c$ and $\nu_2$ is not the displaced native from the hole in $T$ in column $c$ then the same argument we just applied to $\nu_1$ can be applied to $\nu_2$ to show that $\pi$ occurs in column $c_1$, which is false. Thus $c_1=c$ and $\nu_2$ is the displaced native from the hole in $T$ in column $c$. 

Finally, we deal with the $k=2$ case.
By \lref{l:nearrow} it suffices to consider the cases when $|R|\le2$
and $|C|\ge4$ or when $|R|\ge4$ and $|C|\le2$. We treat the former
case; the latter case can be resolved by transposing our arguments.
Suppose that $c_1,c_2,c_3,c_4$ are distinct columns in $C$ and let
$r_1 \in R$. By relabelling if necessary, we may assume that
$\{c_1,c_2\}\cap\{c, c'\}=\emptyset$ and $\pi$ does not occur in
column $c_1$. Let $\nu = L[r_1,c_2]$. Since $\nu$
is native to $T$ it follows that $\nu$ occurs in column $c_1$ of
$T$, say in row $r_2$. Since $S$ is a Latin square that contains
column $c_1$ and symbol $\nu$ and $\pi$ does not occur in column
$c_1$ it follows that $S$ must contain row $r_2$.
Now, since $k=2$ there must be at least three symbols in $V$ that are
native to $T$. Each of them must occur in column $c_1$ of $V$, contradicting
that $|R|\le2$.
\end{proof}

The following lemma is straightforward.

\begin{lem}\label{l:3cyc}
Let $L$ be a Latin square with a row cycle $\rho$ of length $3$. Suppose that $S$ is a subsquare of $L$ that contains more than one entry in $\rho$. Then $S$ contains all entries of $\rho$.
\end{lem}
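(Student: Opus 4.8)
The plan is to exploit the cyclic structure of a length-three row cycle together with the elementary \emph{closure} properties that every subsquare must satisfy. First I would fix coordinates: write the cycle $\rho$ as living in two rows $i,j$ and three columns $c_1,c_2,c_3$, with the $2\times3$ pattern
\[
\begin{array}{ccc} a_1 & a_2 & a_3 \\ a_2 & a_3 & a_1 \end{array}
\]
(row $i$ on top, row $j$ below), where $a_1,a_2,a_3$ are the symbols involved; this is exactly what it means for the cycle $(a_1\,a_2\,a_3)$ of $\tau_{i,j}$ to have length three. Writing $R_S$, $C_S$ and $\Sigma_S$ for the rows, columns and symbols of $S$, the goal is to show $\{i,j\}\subseteq R_S$ and $\{c_1,c_2,c_3\}\subseteq C_S$, which is equivalent to $S$ containing all of $\rho$.

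The engine is three closure facts for a subsquare $S$: (i) if a row $r$ and a symbol $a$ both lie in $S$, then the column where $a$ sits in row $r$ lies in $C_S$; (ii) symmetrically, if a column and a symbol both lie in $S$, the row where the symbol sits in that column lies in $R_S$; and (iii) if a row and a column both lie in $S$, then the symbol in that cell lies in $\Sigma_S$. Using these I would record a \emph{propagation step}: if both rows $i,j$ and at least one column $c_m$ lie in $S$, then all of $c_1,c_2,c_3$ lie in $S$. Indeed, by (iii) column $c_m$ contributes both of its symbols to $\Sigma_S$; applying (i) in row $i$ to the symbol sitting in row $j$ of $c_m$ drags in the next column of the cycle, and iterating around the $3$-cycle captures all three columns.

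It then remains to force both rows into $S$. Since all six cells of $\rho$ exist, the set of entries of $\rho$ lying in $S$ is precisely the product $(\{i,j\}\cap R_S)\times(\{c_1,c_2,c_3\}\cap C_S)$, a combinatorial rectangle. Having at least two such entries means either both rows are already present, in which case the propagation step finishes the argument, or both entries lie in a single row, say row $i$, in two columns $c_p,c_q\in C_S$. This last configuration is the one genuinely nontrivial case, and the place where length three is essential: for any pair of columns of a length-three cycle there is a unique symbol whose occurrences in rows $i$ and $j$ fall exactly in those two columns (for $\{c_1,c_2\}$ it is $a_2$, for $\{c_2,c_3\}$ it is $a_3$, for $\{c_1,c_3\}$ it is $a_1$). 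That symbol lies in $\Sigma_S$ by (iii), since it appears in row $i$ at a column of $C_S$, and it appears in row $j$ at the other column of $\{c_p,c_q\}\subseteq C_S$; closure fact (ii) then forces $j\in R_S$, after which the propagation step completes the proof. The main obstacle is thus only this final configuration, which is exactly where the $3$-cycle combinatorics is used and which would fail for longer cycles.
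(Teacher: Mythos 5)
Your argument is correct and complete: the three closure facts, the rectangle structure of $\rho\cap S$, and the observation that every pair of the three columns is joined by a symbol occurring in row $i$ at one and row $j$ at the other (which is exactly where length three is needed) together force both rows and then all three columns into $S$. The paper states this lemma without proof, calling it straightforward, so your write-up is simply the standard verification it had in mind.
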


We will need the following two results, which are analogous to
\lref{l:3cyc} in the case where $L$ is a near copy of a Latin square.

\begin{lem}\label{l:near3cyc}
Let $L$ be a near copy of a Latin square with alien entry
$\pi$. Suppose that $L$ contains the entries,
\[
\mathcal{D} = \{(r_1,c_1,k_1), (r_2,c_1,k_2), (r_1,c_2,k_2), (r_2,c_2,k_3), (r_1,c_3,k_3), (r_2,c_3,k_1)\},
\]
for rows $r_1$ and $r_2$, columns $c_1$, $c_2$ and $c_3$, and symbols $k_1$, $k_2$ and $k_3$. Suppose that $\pi \not\in \mathcal{D}$. Also suppose that $S$ is a subsquare of $L$ that contains entry $(r_1,c_1,k_1)$. Then one of the following holds:
\begin{itemize}
	\item $S \cap \mathcal{D} = \mathcal{D}$, or
	\item $S \cap \mathcal{D}$ contains at most one entry in $\{(r_1,c_2,k_2), (r_1,c_3,k_3)\}$ and none of the entries in $\{(r_2,c_1,k_2), (r_2,c_2,k_3), (r_2,c_3,k_1)\}$.
\end{itemize}
\end{lem}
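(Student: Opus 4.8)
The plan is to run the usual subsquare-closure argument that proves \lref{l:3cyc}, but to route every step around the single alien entry $\pi$. Write $S = L[R,C]$ with symbol set $\Sigma$, so that $r_1\in R$, $c_1\in C$ and $k_1\in\Sigma$. Since $\pi\notin\mathcal{D}$, none of the six cells of $\mathcal{D}$ is the cell of $\pi$, so $L$ takes its ``genuine'' value on each of them. The key observation is the following closure principle: if $c\in C$ is a column not containing $\pi$ and $\sigma\in\Sigma$, then the unique cell of column $c$ of $L$ that holds $\sigma$ lies in $S$; and symmetrically for any row not containing $\pi$ and any $\sigma\in\Sigma$. Because $\pi$ lies in just one row and one column, while the cells of $\mathcal{D}$ occupy the two rows $r_1,r_2$ and the three columns $c_1,c_2,c_3$, at least one of $r_1,r_2$ avoids the row of $\pi$ and at least two of $c_1,c_2,c_3$ avoid the column of $\pi$.

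First I would prove two implications. \emph{(I)} If $c_2,c_3\in C$ (equivalently, all three entries of $\mathcal{D}$ in row $r_1$ lie in $S$, so $k_1,k_2,k_3\in\Sigma$), then $r_2\in R$. To see this, note that the symbols $k_2,k_3,k_1$ occupy row $r_2$ in columns $c_1,c_2,c_3$ respectively; picking any one of these columns that avoids the column of $\pi$ and applying the closure principle to the corresponding symbol pins $r_2$ into $R$. \emph{(II)} If $r_2\in R$, then $c_2,c_3\in C$. Here I would work in whichever of the rows $r_1,r_2$ avoids the row of $\pi$: starting from $k_1,k_2\in\Sigma$, the closure principle locates these symbols in that row and forces the columns $c_2$ and then $c_3$ into $C$, using $(r_2,c_2,k_3)$ or $(r_1,c_3,k_3)$ to bring $k_3$ into $\Sigma$ along the way. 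In either implication, once $r_2\in R$ and $c_1,c_2,c_3\in C$ we have $\mathcal{D}\subseteq S$.

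Finally I would assemble the dichotomy. Suppose the second alternative of the lemma fails. Then either $S$ contains both $(r_1,c_2,k_2)$ and $(r_1,c_3,k_3)$, that is, $c_2,c_3\in C$, or $S$ contains one of the row-$r_2$ entries of $\mathcal{D}$, which forces $r_2\in R$. In the first case implication (I) gives $r_2\in R$, and in the second case implication (II) gives $c_2,c_3\in C$; either way $\mathcal{D}\subseteq S$, which is the first alternative. The main obstacle is exactly the bookkeeping around $\pi$: in a genuine Latin square two entries of $\mathcal{D}$ already force all six by \lref{l:3cyc}, and the whole content of the lemma is that the single defect at $\pi$ can spoil closure in at most one place. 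The crux is therefore to check that $\mathcal{D}$ is wide enough, namely two rows and three columns none meeting the cell of $\pi$, so that each closure step can be carried out through a row or column that $\pi$ misses. The case $\pi\notin S$ can alternatively be dispatched at once, since then $S$ is genuinely a subsquare of the underlying Latin square and \lref{l:3cyc} applies directly.
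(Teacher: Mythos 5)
Your proposal is correct and follows essentially the same route as the paper: both are closure arguments showing that $r_2\in R$ forces all of $\mathcal{D}$ into $S$, and that otherwise the two row-$r_1$ entries of $\mathcal{D}$ cannot both lie in $S$, the point in each case being that the single alien entry $\pi$ can obstruct at most one closure step. The only difference is organizational: you route each closure step through a row or column that avoids $\pi$, whereas the paper pins down where $\pi$ would have to sit whenever a closure step fails and derives a contradiction from $\pi$ needing to occupy two distinct rows or columns.
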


\begin{proof}
Let $R$ be the set of rows of $S$. We will first show that if $r_2 \in
R$ then $S$ contains $\mathcal{D}$. If $r_2 \in R$ then $S$ contains
the entry $(r_2,c_1,k_2)$. So $S$ is a Latin square that contains
row $r_1$ and symbol $k_2$. It follows that $S$ must contain column
$c_2$ or $\pi$ occurs in row $r_1$ of $L$ and has symbol
$k_2$. Suppose first that $S$ does not contain column $c_2$. Since $S$
is a Latin square that contains row $r_2$ and symbol $k_1$ it follows
that $S$ must contain column $c_3$ because $\pi$ is in row $r_1$. 
Therefore $S$ also contains symbol
$k_3$. Since $S$ contains symbol $k_3$ and row $r_2$ it follows that
$S$ must also contain column $c_2$ because $\pi$ is in row
$r_1$. Hence $S$ contains $\mathcal{D}$. Now suppose that $S$ does
contain column $c_2$. Then $S$ is a Latin square that contains row
$r_2$ and symbol $k_1$ and so $S$ contains column $c_3$ unless $\pi$
is in row $r_2$ and has symbol $k_1$. Similarly, since $S$ contains
symbol $k_3$ and row $r_1$ we know that $S$ must contain column $c_3$
unless $\pi$ is in row $r_1$ and has symbol $k_3$. Therefore $S$ must
contain column $c_3$, hence $S$ contains $\mathcal{D}$.

We now consider the case where $r_2 \not\in R$. If $(r_1,c_2,k_2)\in S$
then since $S$ is a Latin square that contains
symbol $k_2$ and column $c_1$ but does not contain row $r_2$, we know
that $\pi$ must be in column $c_1$ and have symbol $k_2$. Similarly if
$(r_1,c_3,k_3) \in S$ then $\pi$ must be in column
$c_3$ and have symbol $k_1$. The lemma follows because $\pi$ cannot be
in both column $c_1$ and $c_3$.
\end{proof}

\begin{lem}\label{l:near3cycalt}
Let $L$ be a near copy of a Latin square with alien entry
$\pi=(r_1,c_1,k_1)$ with displaced native $k_4$.
Suppose that $L$ contains the entries,
\[
\mathcal{D} = \{(r_1,c_1,k_1), (r_2,c_1,k_2), (r_1,c_2,k_2), (r_2,c_2,k_3), (r_1,c_3,k_3), (r_2,c_3,k_4)\},
\] 
for rows $r_1$ and $r_2$, columns $c_1$, $c_2$ and $c_3$, and
distinct symbols $k_1$, $k_2$, $k_3$ and $k_4$. Suppose that $S$ is
a subsquare of $L$ that contains $\pi$. Then
$S \cap \mathcal{D} \subseteq \{(r_1,c_1,k_1), (r_1,c_3,k_3)\}$.
\end{lem}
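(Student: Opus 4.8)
The plan is to exploit the fact that $L$ has a single alien entry, namely $\pi$ itself, so that every entry of the subsquare $S$ other than $\pi$ is native and agrees with the parent Latin square $L'$. Write $R$, $C$ and $\Sigma$ for the sets of rows, columns and symbols of $S$; since $\pi = (r_1,c_1,k_1)\in S$ we have $r_1\in R$, $c_1\in C$ and $k_1\in\Sigma$. The whole argument hinges on one structural fact about $L$: because $k_4$ is the displaced native from $(r_1,c_1)$, the symbol $k_4$ occurred in row $r_1$ of $L'$ only in cell $(r_1,c_1)$, and that cell now holds $k_1\neq k_4$; hence $k_4$ does not occur anywhere in row $r_1$ of $L$.

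The heart of the proof is to show $r_2\notin R$, which I would establish by contradiction. Assuming $r_2\in R$, I chase symbols around the length-$3$ row cycle recorded by $\mathcal{D}$, using the Latin property of $S$ at each step. First $r_2\in R$ and $c_1\in C$ force $(r_2,c_1,k_2)\in S$, so $k_2\in\Sigma$. Since $k_1,k_2,k_3,k_4$ are distinct, $k_2$ occurs in row $r_1$ of $L$ only in cell $(r_1,c_2)$ (the alien entry $\pi$ carries $k_1$, not $k_2$), so the Latin property forces $c_2\in C$. Repeating, $(r_2,c_2,k_3)\in S$ gives $k_3\in\Sigma$, then $k_3$ occurs in row $r_1$ only at column $c_3$, forcing $c_3\in C$, whence $(r_2,c_3,k_4)\in S$ and $k_4\in\Sigma$. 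Now $S$ is a Latin square containing row $r_1$ and symbol $k_4$, so $k_4$ must appear in row $r_1$ of $L$---but by the structural fact above it does not. This contradiction yields $r_2\notin R$, and so none of $(r_2,c_1,k_2)$, $(r_2,c_2,k_3)$, $(r_2,c_3,k_4)$ lies in $S$.

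It remains only to exclude $(r_1,c_2,k_2)$. If it belonged to $S$ then $k_2\in\Sigma$, and since $c_1\in C$ the Latin property forces the unique occurrence of $k_2$ in column $c_1$ of $L$ to lie in $S$. That occurrence is the native entry $(r_2,c_1,k_2)$ (again $\pi$ carries $k_1\neq k_2$), which would put $r_2\in R$, contradicting the previous paragraph. Hence $S\cap\mathcal{D}\subseteq\{(r_1,c_1,k_1),(r_1,c_3,k_3)\}$, as claimed; note that $(r_1,c_3,k_3)$ need not be excluded and may genuinely occur.

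The step I expect to require the most care is the symbol-chasing in the second paragraph, where I must repeatedly verify that each forced symbol occurs \emph{uniquely} in the relevant row or column of $L$ despite the presence of $\pi$. The distinctness hypothesis on $k_1,\dots,k_4$ is precisely what guarantees that $\pi$ never supplies a spurious second occurrence, and the choice of $k_4$ as the displaced native is exactly what makes the forcing chain terminate in a symbol absent from row $r_1$. Keeping track of which occurrences are native versus alien is the only genuinely delicate bookkeeping.
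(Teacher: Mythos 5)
Your proof is correct and follows essentially the same route as the paper's: assume $r_2\in R$ and chase the cycle through $c_2$, $c_3$ to force $k_4$ into $S$, contradicting that the displaced native $k_4$ is absent from row $r_1$; then rule out $(r_1,c_2,k_2)$ because it would force $r_2\in R$ via column $c_1$. The extra care you take in verifying that $\pi$ never supplies a spurious occurrence of $k_2$, $k_3$ or $k_4$ is exactly the bookkeeping the paper leaves implicit.
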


\begin{proof} 
Let $R$ be the set of rows of $S$. We first show that $r_2 \notin R$.
If $r_2 \in R$ then $S$
contains the entry $(r_2,c_1,k_2)$. Since $S$ is a Latin square
that contains symbol $k_2$ and row $r_1$ it follows that $S$ must
also contain column $c_2$. Hence $S$ also contains symbol $k_3$ and
therefore $S$ must also contain column $c_3$ and hence also symbol
$k_4$. However, $k_4$ does not occur in row $r_1$, which contradicts
the fact that $S$ is a subsquare.

If $S$ contains $(r_1,c_2,k_2)$
then since $S$ is a Latin square that contains column $c_1$ and
symbol $k_2$ it follows that $S$ must also contain row $r_2$, which
we have just shown is impossible.
\end{proof}

As mentioned in \sref{s:back}, we will utilise corrupted products in our construction of $N_\infty$ Latin squares. So we will need some corrupting pairs. Throughout the rest of the paper we will be using the following four Latin squares frequently, and the symbols $A_8$, $B_8$, $A_9$ and $B_9$ will be reserved for them.
\begin{align}\label{e:AB8}
A_8 =	\left[\begin{array}{cccccccc}
	4&8&6&7&5&1&3&2\\
	8&6&4&2&7&5&1&3\\
	1&7&5&3&4&2&6&8\\
	5&4&3&1&2&6&8&7\\
	3&2&1&\mk4&6&\mk8&\mk7&5\\
	2&1&7&5&8&3&4&6\\
	6&3&2&\mk8&1&\mk7&\mk5&4\\
	7&5&8&6&3&4&2&1\\
\end{array}\right]
\quad
B_8 = \left[\begin{array}{cccccccc}
	4&1&7&2&8&6&5&3\\
	7&3&5&8&6&1&4&2\\
	3&5&8&4&1&7&2&6\\
	2&7&4&6&3&8&1&5\\
	1&8&6&5&4&2&3&7\\
	6&4&3&7&2&5&8&1\\
	5&2&1&3&7&4&6&8\\
	8&6&2&1&5&3&7&4\\
\end{array}\right]
\end{align}
\begin{align}\label{e:AB9}
A_9 =\left[\begin{array}{ccccccccc}
	2&8&6&3&1&4&5&9&7\\
	8&6&2&9&5&1&3&7&4\\
	3&\mk4&7&1&\mk2&5&6&8&\mk9\\
	1&3&5&2&4&9&7&6&8\\
	9&1&8&7&3&2&4&5&6\\
	7&\mk2&1&6&\mk9&3&8&4&\mk5\\
	4&5&9&8&7&6&1&2&3\\
	5&7&3&4&6&8&9&1&2\\
	6&9&4&5&8&7&2&3&1\\
\end{array}\right]
\quad
B_9 = \left[\begin{array}{ccccccccc}
	2&4&3&7&8&6&9&5&1\\
	3&7&9&5&4&8&2&1&6\\
	4&6&1&3&9&2&5&7&8\\
	6&2&4&9&5&1&8&3&7\\
	7&5&6&8&1&4&3&9&2\\
	5&9&2&1&3&7&6&8&4\\
	1&3&8&2&6&5&7&4&9\\
	8&1&5&6&7&9&4&2&3\\
	9&8&7&4&2&3&1&6&5\\
\end{array}\right]
\end{align}

Let $\alpha \in \{8,9\}$, and let $A = A_\alpha$ and $B = B_\alpha$. The following properties of $A$ and $B$ can be verified computationally.

\begin{enumerate}[Property 1:]
\item $(A, B)$ is a corrupting pair.
\item For $i \in [3]$ let $d_i = A[1,i]$. The row permutation $\tau_{1,2}$ of $A$ contains the cycle $(d_1,d_2,d_3)$. Furthermore, $\{d_i+1 : i \in [3]\} \cap \{d_i : i \in [3]\} = \emptyset$. 
\item 
As highlighted in \eref{e:AB8} and \eref{e:AB9}, there is a row permutation $\tau_{i, j}$ of $A$ with $3 \leq i < j$, and symbol $k$ such that $\tau_{i, j}^3(k) = k+1 \not\in \{d_1,d_2,d_3\}$ and none of $k$, $\tau_{i, j}(k)$ or $\tau_{i, j}^2(k)$ occur in cell $(i, 1)$.
\item $\{((i, j), (i', j')) \in ([2] \times [3])^2 : A[i, j] = B[i', j']\} = \{((1,1), (1,1))\}$.
\item The only matrix in the set,
\begin{equation}\label{e:asub}
	\{d_1 \inc A[2,1], d_2 \inc A[1,1], d_2 \inc A[2,2], d_3 \inc A[1,2], d_3 \inc A[2,3], d_1 \inc A[1,3]\}
\end{equation}
that contains a subsquare of order at least two is the matrix $d_1 \inc A[1,3]$. Furthermore, any proper subsquare of this matrix is an intercalate.

\item Suppose that $C$ is one of the matrices in \eref{e:asub}, that
$D$ is a matrix in
\begin{equation}\label{e:a2sub}
	\{B[i, j] \inc C[i, j] : \{i, j\} \subset [\alpha]\},
\end{equation}
and that $S$ is a square submatrix of $D$ that includes two alien entries
with respect to $A$. Then $S$ contains at least two different symbols.
Also, if $S$ is a subsquare then it is an intercalate. If $S$
is an intercalate that includes the principal entry of $D$, then its two alien
entries with respect to $A$ both occur within the first row of $D$ or both
occur within the first column of $D$.

\item No matrix in the set 
\begin{equation}\label{e:prop7}
	\begin{aligned}
		\{&d_2\inc A[1,1], d_3\inc A[1,1], d_1\inc A[1,2], 
		d_1\inc A[1,3], d_1\inc A[2,1]\}
	\end{aligned}
\end{equation}
contains a subsquare of order more than two.

\end{enumerate}

The definitions of $d_1,d_2,d_3$ from Property $2$ will be fixed for
the remainder of this section. Also, there is overlap between Property
$5$ and Property $7$, but it is convenient to state them both given
the distinct roles that these properties will play in our proof.

Let $\X$ denote the set of pairs $(L, s)$ where $L$ is an $N_\infty$ Latin square of order $\mu \geq 10$ with row indices, column indices and symbol set $[\mu]$, and $s \in [\mu-1]$ such that the following conditions hold:
\begin{enumerate}[Condition X]
\item[Condition $(i)$:] $s$ is a strong allowable shift with respect to $(A_8,L)$ and $(A_9,L)$.
\item[Condition $(ii)$:] $L$ contains a row cycle of length $3$ that involves rows $x_1$, $x_2$, columns $y_1$, $y_2$, $y_3$ and symbols $z_1$, $z_2$, $z_3$ with $1 \not\in \{x_1,x_2,y_1,y_2,y_3\}$ and $L[1,1]+s \not\in \{z_1,z_2,z_3\}$. Moreover, the matrix $L[x_2,y_3] \inc L[x_1,y_3]$ does not contain an intercalate.

\item[Condition $(iii)$:] There exist rows $r_1$, $r_2$, columns $c_1$, $c_2$, $c_3$ and a symbol $\sigma$ of $L$, with $1 \not\in \{r_1,r_2,c_1,c_2,c_3\}$, such that $L$ contains the entries,
\begin{equation}\label{e:proto3cyc}
	\{(r_1,c_1,\sigma), (r_2,c_1,\tau(\sigma)), (r_1,c_2,\tau(\sigma)),(r_2,c_2,\tau^2(\sigma)), (r_1,c_3,\tau^2(\sigma)), (r_2,c_3,\sigma+s)\}
\end{equation}
where $\tau=\tau_{r_1,r_2}$. Also, the matrix $L[r_2,c_3] \inc L[r_1,c_3]$ contains no intercalates and neither $L[1,1]$ nor $L[1,1]+s$ are elements of $\{\sigma, \tau(\sigma), \tau^2(\sigma), \sigma+s\}$.
\end{enumerate}

Define $N(\X) = \{\mu \in \mathbb{Z} : \text{there is a pair } (L,
s)\in \X \text{ where $L$ is of order $\mu$}\}$.  We aim to show that
$N(\X)$ contains all integers of the form $2^x3^y\ge10$. We will do
this using a recursive construction involving corrupted products.
Condition $(iii)$ together with Property $2$
is used to ensure that we will have a row cycle of length $3$
available to switch to destroy $\beta_M$, the unique proper subsquare
in the corrupted product. The difference between the symbols in the
first and last entries in \eref{e:proto3cyc} accounts for the shift
by $s$ that occurs when $\beta_M$ is created.
Great care is needed to ensure that we do not create new subsquares
in our recursive step. Several of the properties of $(A,B)$ have been
designed with this in mind. Also, Condition $(iii)$
includes subconditions to ensure we do not create intercalates.
Condition $(ii)$ is needed for the recursive step, in order to
ensure that Condition $(iii)$ can be satisfied for the subsequent
step. 

\subsection{Base cases}\label{ss:basecases}

In this subsection we create suitable base cases for our recursion.
We will show that 
\begin{equation}\label{e:base}
\{12,16,18,24,32,36,48,54,64,72\}
\subseteq N(\X). 
\end{equation}
In every instance we will use $s=1$.
Suppose that some pair $(L,1)$ satisfies Condition
$(iii)$ with some rows $r_1$, $r_2$, columns $c_1$, $c_2$, $c_3$, and
symbol $\sigma$. We will simply say that $L$ satisfies Condition
$(iii)$ with rows $r_1$, $r_2$ and symbol $\sigma$, since the
columns $c_1$, $c_2$ and $c_3$ are uniquely determined by this
information. Similarly, if $(L, 1)$ satisfies Condition $(ii)$ with row cycle $\rho(i, j, c)$ then we will simply say that $L$ satisfies Condition $(ii)$ with row cycle $\rho(i, j, c)$. Furthermore, we will always choose $i$, $j$ and $c$, respectively, to play the roles of $x_1$, $x_2$ and $y_3$ in Condition $(ii)$.
We will also not explicitly say that each of our base cases
satisfies Condition $(i)$; this is something that can easily be checked.

Let $L_{12}$ denote the $N_\infty$ Latin square of order $12$ constructed by Gibbons and Mendelsohn~\cite{ninf12}. Then $L_{12}$ satisfies Condition $(ii)$ with row cycle $\rho(2,11,11)$, and satisfies Condition $(iii)$ with rows $3$, $8$ and symbol $10$. 
Let $L_{18}$ denote the $N_\infty$ Latin square of order $18$ constructed by Elliott and Gibbons~\cite{siman}. Then $L_{18}$ satisfies Condition $(ii)$ with row cycle $\rho(4,5,9)$ and satisfies Condition $(iii)$ with rows $2$, $11$ and symbol $10$.

We next give a construction for Latin squares which will show that
$\{16,32,64\} \subseteq N(\X)$.  Let $n \geq 4$ be a positive
integer satisfying $\gcd(n,6)=2$ and let $J$ be a Latin square of
order $3$. Let $\mathcal{C}$ denote the Latin square on symbols
$[n-3]$ defined by $\mathcal{C}_{i, j} = (i+j) \bmod (n-3)$. For
$k\in\{-1,0,1\}$ define the following set of entries of $\mathcal{C}$,
\[
\Theta_k = \{(2j-3k,j,3j-3k): j \in [n-3]\}.
\]
It is simple to see that the sets $\Theta_{-1}$, $\Theta_0$ and $\Theta_1$ are pairwise disjoint. A Latin square $K = K(n,J)$ can then be defined as follows. If $(i, j, \ell)$ is an entry of $\mathcal{C}$ which is not contained in any set $\Theta_k$ then $K[i, j]=\ell$. If $(i, j, \ell) \in \Theta_k$ then $K[i, j]=n+2+k$ and $K[i, n+2+k] = K[n+2-k, j] = \ell$. Finally, $K[n-3+i, n-3+j]=n+J[i, j]$ for each $\{i, j\} \subseteq [3]$. For our purposes we will take
\begin{align*}
J=\left[\begin{array}{ccc}
	3 & 2 & 1 \\
	1 & 3 & 2 \\
	2 & 1 & 3
\end{array}\right].
\end{align*}
The squares $K(n,J)$ were first constructed by Kotzig and Turgeon~\cite{kotzturg}. In \cite{1subsq} it was shown that when $n-3$ is prime the only proper subsquare of $K(n,J)$ is the copy of $J$ in the bottom right corner.

Let $n \in \{16,32,64\}$ and let $L_n$ be the Latin square obtained from $K(n, J)$ by switching on $\eta(n, 1,n/2-1)$, then switching the resulting square on $\eta(4,n-1,n-2)$. Then $L_n$ satisfies Condition $(ii)$ with row cycle $\rho(2,8,n/2+4)$, and satisfies Condition $(iii)$ with rows $2$, $3$ and symbol $n-2$. 

To show that $\{24,36,48,54\} \subseteq N(\X)$ we will use a new construction. Let $e$ be a positive, even integer and let $E$ be an $N_\infty$ square of order $e$. Let $(k, \ell)$ be a cell in $E$ and let
\begin{align*}
Z=\left[\begin{array}{ccc}
	1 & 2 & 3 \\
	2 & 3 & 1 \\
	3 & 1 & 2
\end{array}\right].
\end{align*}
Define a Latin square $\mathcal{L} = \mathcal{L}(E, (k, \ell))$ by first forming a matrix whose rows and columns are indexed by $[3] \times [e]$, and where the cell $((i, j), (x, y))$ is occupied by
\[
\begin{cases}
(2,E[k, \ell]) & \text{if } (i, x) \in \{(1,1), (2,3), (3,2)\} \text{ and } (j, y) = (k, \ell), \\
(1,E[k, \ell]) & \text{if } (i, x) \in \{(1,2), (2,1), (3,3)\} \text{ and } (j, y) = (k, \ell), \\
(Z[i, x], E[j, y]+e/2) & \text{if } (i, x) = (2,2), \\
(Z[i, x], E[j, y]) & \text{otherwise}.
\end{cases}
\]
We then rename the rows indices, column indices and symbols using $\prec_1$
to obtain $\mathcal{L}$. Intuitively, we are obtaining $\mathcal{L}$ from the direct product $Z \times E$ as follows. Add $e/2$ to the $E$-coordinate of each symbol in the $E$-block of $Z \times E$ in position $(2,2)$. Then switch the resulting square on a symbol cycle of length $3$ between the symbols $(1,E[k, \ell])$ and $(2,E[k, \ell])$.

Henceforth we fix $E$ to be the $N_\infty$ square in \eref{e:E}.
Let $L_{24}$ denote the Latin square obtained from $\mathcal{L}(E, (1,2))$ by switching on $\eta(6,14,18)$. Then $L_{24}$ satisfies Condition $(ii)$ with row cycle $\rho(2,7,16)$ and satisfies Condition $(iii)$ with rows $2$, $5$ and symbol $3$. Let $L_{36}$ denote the Latin square obtained from $\mathcal{L}(L_{12}, (2,3))$ by switching on $\eta(1,21,30)$. Then $L_{36}$ satisfies Condition $(ii)$ with row cycle $\rho(2,11,35)$ and satisfies Condition $(iii)$ with rows $2$, $3$ and symbol $15$. Let $L_{48}$ denote the Latin square obtained from $\mathcal{L}(L_{16}, (16,8))$ by switching on $\eta(1,17,41)$. Then $L_{48}$ satisfies Condition $(ii)$ with row cycle $\rho(2,8,12)$ and satisfies Condition $(iii)$ with rows $2$, $3$ and symbol $14$. 
Let $L_{54}$ denote the Latin square obtained from $\mathcal{L}(L_{18}, (7,7))$ by switching on $\eta(1,19,52)$. Then $L_{54}$ satisfies Condition $(ii)$ with row cycle $\rho(2,10,29)$ and satisfies Condition $(iii)$ with rows $2$, $8$ and symbol $34$. 

Finally, to show that $72 \in N(\X)$ consider the corrupted product $P = (A_9,B_9) *_5 E$. Let $L_{72}$ be obtained from $P$ by renaming the row indices, column indices and symbols using $\prec_1$ and then switching on $\rho(2,11,3)$.
Then $L_{72}$ satisfies Condition $(ii)$ with row cycle $\rho(2,7,48)$ and satisfies Condition $(iii)$ with rows $2$, $5$ and symbol $19$.
We have shown \eref{e:base}.

\subsection{The recursive step}

We will prove the following theorem, by combining Lemmas \ref{l:qninf},
\ref{l:Qmshift} and \ref{l:Qii} below.

\begin{thm}\label{t:X}
If $\mu \in N(\X)$ then $\{8\mu, 9\mu\} \subseteq N(\X)$.
\end{thm}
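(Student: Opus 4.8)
The plan is to take a pair $(M,s)\in\X$ with $M$ of order $\mu$ and, for each $\alpha\in\{8,9\}$, build a new pair $(L,s')\in\X$ with $L$ of order $\alpha\mu$. The natural vehicle is the corrupted product $P=(A_\alpha,B_\alpha)*_s M$: by Property~1 the pair $(A_\alpha,B_\alpha)$ is a corrupting pair of order $\alpha$, and since $(M,s)\in\X$ the shift $s$ is a strong allowable shift with respect to $(A_\alpha,M)$ by Condition~$(i)$; in particular $s$ is an allowable shift, so Theorem~\ref{t:1sub} applies and the only proper subsquare of $P$ is $\beta_M$, the principal $M$-block. The heart of the construction is to destroy $\beta_M$ by a single cycle switch without creating any new proper subsquare. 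Condition~$(iii)$ for $(M,s)$, combined with Property~2, is exactly engineered to supply a row cycle of length~$3$ that accomplishes this: the entries in \eref{e:proto3cyc} trace out, after the product is formed and the shift applied, a length-$3$ cycle passing through $\beta_M$, and switching on it removes the subsquare. The $+s$ discrepancy between the first and last symbols in \eref{e:proto3cyc} is precisely what compensates for the shift introduced in creating $\beta_M$. So I would first isolate this switch as the lemma referenced as Lemma~\ref{l:qninf}, showing that the switched square $L$ is $N_\infty$.

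The main work, and the hardest part, is verifying that the switch creates no new subsquare. After switching, $L$ is no longer a direct-product-like object globally; but locally it is controlled. The strategy is: any hypothetical new proper subsquare $S$ of $L$ must interact with the switched cells, so $S$ meets one of the perturbed blocks. Each block of $P$ is a near copy (or $k$-near copy) of a square isotopic to $A_\alpha$ or to $M$. I would project $S$ onto the $A$-coordinate and the $M$-coordinate via $\Phi_1,\Phi_2$, and use Lemmas~\ref{l:nearnearcopy}, \ref{l:near3cyc} and \ref{l:near3cycalt} to constrain how $S$ can sit inside these near copies. The subconditions in Condition~$(iii)$ guaranteeing that $M[r_2,c_3]\inc M[r_1,c_3]$ contains no intercalate, together with Properties~5, 6 and 7 (which rule out small subsquares through the relevant altered entries of $A_\alpha$), are what close off each case. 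This is the step I expect to be the genuine obstacle: it is a case analysis over where the rows, columns and symbols of $S$ land relative to the switched cycle and the corrupted blocks, and getting it airtight is delicate. The avoidance of intercalates is handled separately because Lemma~\ref{l:mintrade} tells us a near copy differing in only $k<4$ cells cannot introduce one, while $k=4$ forces an intercalate in the parent—so the explicit no-intercalate clauses in Conditions~$(ii)$ and $(iii)$ are needed.

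Having established that $L$ is $N_\infty$ of order $\alpha\mu$, it remains to produce a shift $s'$ and to verify the three membership conditions for $(L,s')\in\X$. For Condition~$(i)$, I would show (this is Lemma~\ref{l:Qmshift}) that some $s'\in[\alpha\mu-1]$ is a strong allowable shift with respect to both $(A_8,L)$ and $(A_9,L)$; the point is that $\alpha\mu\ge 80>\alpha$, so there is ample room, and a counting argument of the flavour of Lemma~\ref{l:nolargesubsq}—a subsquare of order $\alpha$ in a near copy $L[i,j]+s'\inc L[i,j]$ would have to be nearly half the order, which limits how many values of $s'$ can fail—should yield a valid $s'$. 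For Conditions~$(ii)$ and~$(iii)$ (this is Lemma~\ref{l:Qii}), I would exhibit, inside the newly built $L$, a fresh length-$3$ row cycle avoiding row~$1$ with the required no-intercalate property, and a fresh configuration \eref{e:proto3cyc} with its symbols avoiding $L[1,1]$ and $L[1,1]+s'$. The natural source is the copy of $A_\alpha$'s structure furnished by Properties~2 and~3: Property~3 explicitly designates a row cycle in $A_\alpha$ (the highlighted one in \eref{e:AB8}, \eref{e:AB9}) suited to seeding Condition~$(ii)$ for the next recursion, while Property~2's cycle $(d_1,d_2,d_3)$ with $\{d_i+1\}\cap\{d_i\}=\emptyset$ seeds Condition~$(iii)$. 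Keeping these configurations clear of row and column~$1$, and clear of the forbidden symbols, is a bookkeeping matter once the block positions are chosen away from the principal block. Combining the three lemmas then gives $\{8\mu,9\mu\}\subseteq N(\X)$, proving the theorem.
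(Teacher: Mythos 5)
Your outline of the construction itself matches the paper: form $P=(A_\alpha,B_\alpha)*_sM$, use Condition~$(iii)$ together with Property~2 to locate a length-$3$ row cycle through $\beta_M$, switch on it, and then run a block-by-block case analysis (via Lemmas~\ref{l:nearnearcopy}, \ref{l:near3cyc}, \ref{l:near3cycalt} and Properties 5--7) to show the switched square $Q$ is $N_\infty$. That part of your plan is sound and is essentially what Lemmas~\ref{l:1rho}--\ref{l:qninf} do.

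The genuine gap is in how you propose to handle Condition~$(i)$. You treat the new shift $s'$ as a free parameter to be found by a counting argument, citing Lemma~\ref{l:nolargesubsq} to claim that ``a subsquare of order $\alpha$ in a near copy $L[i,j]+s'\inc L[i,j]$ would have to be nearly half the order.'' That lemma gives an \emph{upper} bound $s\le(n+1)/2$ on subsquares of near copies; it says nothing restrictive about subsquares of order $\alpha\in\{8,9\}$ inside a near copy of order $\alpha\mu\ge80$, so it cannot limit the number of bad shifts. Moreover a valid $s'$ must work simultaneously for all $(\alpha\mu)^2$ cells and for both $A_8$ and $A_9$, and $s'$ also enters Conditions~$(ii)$ and~$(iii)$ (through $L[1,1]+s'$ and the symbol $\sigma+s'$ in \eref{e:proto3cyc}), so you are not free to choose it independently of those conditions. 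The paper instead takes the \emph{specific} shift $s'=\mu$, which under the relabelling $\varphi(i,j)=\mu(i-1)+j$ amounts to incrementing the $A$-coordinate by one; this is exactly what the ``$+1$'' clauses in Properties~2 and~3 ($\{d_i+1\}\cap\{d_i\}=\emptyset$, $\tau_{i,j}^3(k)=k+1$) are designed for, and verifying that $\mu$ is a strong allowable shift is itself a substantial structural argument (Lemmas~\ref{l:blocksP}, \ref{l:not3W}, \ref{l:q'2}, \ref{l:Qmshift}), not a matter of ``ample room.'' A smaller slip: you have the roles of Properties~2 and~3 reversed --- the switched cycle (from Property~2 and Condition~$(iii)$ of $M$) becomes Condition~$(ii)$ of the new square, while Property~3 together with Condition~$(ii)$ of $M$ supplies the new Condition~$(iii)$ configuration, whose last symbol $k+1$ realises the required $\sigma+s'$ with $s'=\mu$.
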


The following notation will be fixed throughout this subsection.
Let $A = A_\alpha$ and $B = B_\alpha$, where $\alpha \in \{8,9\}$. Let
$(M, s) \in \X$ with $M$ of order $\mu$ and let $P = (A, B) *_s
M$. Since $(M, s) \in \X$ we know that $M$ satisfies Condition $(iii)$
with some rows $r_1$, $r_2$, columns $c_1$, $c_2$, $c_3$, and symbol
$\sigma$.
By definition of $P$ we have that $P[(1,r_1), (1,c_1)] = (d_1,\sigma+s)$.
Combining this with Condition $(iii)$ and Property $2$,
we see that the row cycle $\rho((1,r_1), (2,r_2), (3,c_3))$
of $P$ has length $3$.  Let $Q$ be obtained from $P$ by switching on
this row cycle.  We will use $\tau$ to denote the row permutation
$\tau_{r_1,r_2}$ of $M$. Then $Q$ contains the row cycle
$\rho((1,r_1), (2,r_2), (3,c_3))$.  Denote this row cycle by
$\mathcal{D}$. Define $Q_1$ to be the Latin square obtained from $Q$
by using $\prec_1$ to relabel the row indices, column indices and
symbols of $Q$ to be the set $[\alpha\mu]$. Formally, we relabel by using the
map $\varphi : [\alpha] \times [\mu] \to [\alpha\mu]$ defined by
$\varphi(i, j) =\mu(i-1)+j$.
In order to prove \tref{t:X}, we will show that $(Q_1,\mu) \in \X$.
Note that if $\varphi(i, j) = k$ then $\varphi(i+1,j) = k+\mu$.
For each cell $(\gamma, \delta)$ of $Q$ we define $Q'$ by
$Q' = (Q[\gamma, \delta]+(1,0)) \inc Q[\gamma, \delta]$.
Hence to show that
$\mu$ is a strong allowable shift with respect to $Q_1$ it suffices to
show for every cell $(\gamma, \delta)$ of $Q$ that $Q'$ contains no proper
subsquare of order $\alpha$.

Notice that $Q$ is a $6$-near copy of $P$ and $Q'$ is just a relabelling of $Q$.
The following lemma exhibits strong restrictions on the intersection between
a subsquare and a block in such matrices.

\begin{lem}\label{l:blocksP}
Let $P'$ be a $\ell$-near copy of $P$ for some non-negative integer
$\ell \leq 7$. Suppose that $P'$ is a near copy of a Latin square
$P''$ with associated alien entry $\pi$. Suppose that $\mathcal{D}\cup\{\pi\}$
contains all the alien entries of $P'$ with respect to $P$.
Let $T$ be a block of $P'$ in some position $(u, v)$ and assume that $\pi \not\in T$. Let $S$ be a subsquare of $P'$ containing $\pi$.  Suppose that $S$ contains more than one entry from $T$. Let $V = S \cap T$, let $R$ be the set of rows of $V$ and let $C$ be the set of columns of $V$.
Then,
\begin{enumerate}[$(i)$]
	\item If $T$ is an $M$-block that contains no alien entries with respect to $P$ then $T$ is not the principal $M$-block and one of the following is true: 
	\begin{enumerate}
		\item $R = \{(u, 1)\}$, $C = \{(v, 1), (v, c)\}$ for some $c \in [\mu]$, $\pi$ is in column $(v, 1)$ and has symbol $P'[(u, 1), (v, c)]$,
		\item $R = \{(u, 1), (u, r)\}$ for some $r \in [\mu]$, $C = \{(v, 1)\}$, $\pi$ is in row $(u, 1)$ and has symbol $P'[(u, r), (v, 1)]$,
		\item $R = \{(u, r)\}$ for some $r \in [\mu] \setminus \{1\}$, $C = \{(v, 1), (v, c)\}$ for some $c \in [\mu]$, $\pi$ occurs in column $(v, c)$ and has symbol $P'[(u, r), (v, 1)]$. Furthermore, $P'[(u, r), (v, c)] = (A[u, v], M[1,1])$,
		\item $R = \{(u, 1), (u, r)\}$ for some $r \in [\mu]$, $C = \{(v, c)\}$ for some $c \in [\mu] \setminus \{1\}$, $\pi$ occurs in row $(u, r)$ and has symbol $P'[(u, 1), (v, c)]$. Furthermore, $P'[(u, r), (v, c)] = (A[u, v], M[1,1])$.
	\end{enumerate}
	\item If $T$ is an $A$-block that contains no alien entries with respect to $P$ then one of the following is true: 
	\begin{enumerate}
		\item $R = \{(1,u)\}$, $C = \{(1,v), (c, v)\}$ for some $c \in [\alpha]$, $\pi$ is in column $(1,v)$ and has symbol $P'[(1,u), (c, v)]$,
		\item $R = \{(1,u), (r, u)\}$ for some $r \in [\alpha]$, $C = \{(1,v)\}$, $\pi$ is in row $(1,u)$ and has symbol $P'[(r, u), (1,v)]$,
		\item $R = \{(r, u)\}$ for some $r \in [\alpha] \setminus \{1\}$, $C = \{(1,v), (c, v)\}$ for some $c \in [\alpha]$, $\pi$ occurs in column $(c, v)$ and has symbol $P'[(r, u), (1,v)]$. Furthermore, $P'[(r, u), (c, v)] = (A[1,1], M[u, v])$,
		\item $R = \{(1,u), (r, u)\}$ for some $r \in [\alpha]$, $C = \{(c, v)\}$ for some $c \in [\alpha] \setminus \{1\}$, $\pi$ occurs in row $(r, u)$ and has symbol $P'[(1,u), (c, v)]$. Furthermore, $P'[(r, u), (c,v)] = (A[1,1], M[u, v])$.
	\end{enumerate}
	\item If $T$ contains an alien entry with respect to $P$ then
	$|R|\leq 3$ and $|C| \leq 3$. Furthermore, $\min(|R|, |C|)<3$.
\end{enumerate}
\end{lem}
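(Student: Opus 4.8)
The plan is to reduce each case of \lref{l:blocksP} to the general intersection results \lref{l:nearnearcopy} and \lref{l:nearrow}, which were precisely engineered to handle subsquares meeting $k$-near copies of $N_\infty$ squares. The key observation is that every block $T$ of $P'$ is, up to isotopy, a small near copy of an $N_\infty$ square: an $M$-block is isotopic to a near copy of $M$ (since it sits inside the $A$-block structure, with at most the principal $M$-block being altered by the shift $s$), and an $A$-block is isotopic to a near copy of $A_\alpha$. Since we assume $T$ contains no alien entries with respect to $P$, and the only global alien entries of $P'$ lie in $\mathcal{D}\cup\{\pi\}$ with $\pi\notin T$, the block $T$ is either a genuine copy of the relevant $N_\infty$ square or differs from one by entries coming from $\mathcal{D}$. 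I would first carefully count, for each type of block, exactly how many entries of $\mathcal{D}$ can land inside $T$; because $\mathcal{D}$ is a single row cycle of length $3$ spread across the blocks in positions determined by $\varphi$, each individual block meets $\mathcal{D}$ in a bounded number of entries, giving the value of $k\in\{0,1,2\}$ to feed into \lref{l:nearnearcopy}.

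For parts $(i)$ and $(ii)$, the hypothesis is that $T$ itself contains no alien entries with respect to $P$, so $T$ is an honest (possibly relabelled) copy of $M$ or of $A_\alpha$, i.e.\ the case $k=0$ of \lref{l:nearnearcopy}. That lemma's $k=0$ conclusion is simply ``$S$ contains $T$'', but here we also know $\pi\notin T$ while $\pi\in S$, so $S$ strictly contains $T$ and meets extra rows or columns outside $T$; the finer structure in the enumerated subcases (a)--(d) is then extracted directly from the $k=1$ analysis of \lref{l:nearnearcopy} applied to the ambient near copy $P'$ of $P''$, whose single alien entry is $\pi$. Concretely, I would set $T$ to be the block, observe that $V=S\cap T$ has at least two entries, and invoke \lref{l:nearnearcopy} with $N$ the underlying $N_\infty$ square and $\pi$ as the global alien entry; its five subcases for $k=1$ translate line-by-line into the four listed outcomes once we record that the ``$S$ contains $T$'' branch is excluded by $\pi\notin T$ (an $M$-block or $A$-block containing $\pi$ as a subsquare of $P'$ would force $\pi$ into $T$). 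The additional assertions about the values $P'[(u,r),(v,c)]=(A[u,v],M[1,1])$ and the principal-block exclusion follow by matching the displaced-native description in \lref{l:nearnearcopy} against the explicit formula for $P$: the displaced native from the relevant hole is exactly the principal entry $M[1,1]$ of the unshifted block, and an $M$-block that is the principal $M$-block is the subsquare $\beta_M$, which cannot both contain $\pi$ and avoid it.

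For part $(iii)$, the block $T$ does contain an alien entry of $P'$ with respect to $P$, so by hypothesis that alien entry lies in $\mathcal{D}$; thus $T$ is a $k$-near copy of an $N_\infty$ square with $k\in\{1,2\}$, and I would apply the $k=2$ conclusion of \lref{l:nearnearcopy}, whose three outcomes are exactly ``$S\supseteq T$'', ``$|R|\le3$, $|C|\le3$, $\min(|R|,|C|)<3$'', and the two-alien-entry shadow condition. The first and third outcomes must be ruled out to leave only the size bound. The first is impossible because $\pi\notin T$ but $\pi\in S$ would again force $T\subsetneq S$ rather than equality, and more importantly because a full block $T$ as a subsquare of $P'$ would be a proper subsquare not equal to $\beta_M$, contradicting the structure guaranteed by \tref{t:1sub} together with the allowable-shift hypothesis; the third outcome requires $T$ to carry two distinct alien entries from $\mathcal{D}$ arranged in a shared row or column, which I would exclude by checking, using the explicit positions of $\mathcal{D}$'s six entries under $\varphi$, that no single block receives two entries of $\mathcal{D}$ in a configuration producing a genuine subsquare shadow (Property 6 of $(A,B)$ is the tool here, guaranteeing that the relevant two-alien-entry submatrices are at most intercalates and constraining where their alien entries sit).

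The main obstacle I expect is the bookkeeping in part $(iii)$: verifying that the entries of $\mathcal{D}$ distribute among the blocks so that the two-alien-entry ``shadow subsquare'' outcome of \lref{l:nearnearcopy} never actually occurs, and that the one remaining block carrying two alien entries (if any) is governed by Property 6 rather than producing a new subsquare. This requires tracking precisely which of the six cells of $\mathcal{D}$ fall into a common block under the coordinatewise structure of the corrupted product, and interlocking that with the constraint $\pi\in S$, $\pi\notin T$; the algebra is routine but the case split on the block position $(u,v)$ relative to the positions hit by $\mathcal{D}$ is where the argument is most delicate.
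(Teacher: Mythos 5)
Your overall strategy---feeding each block into \lref{l:nearnearcopy}---is the same as the paper's, but two of your key steps are wrong as stated. First, a block of $P'$ with no alien entries with respect to $P$ is \emph{not} an exact ($k=0$) copy of a square isotopic to $M$ or $A$: the corrupted product alters the principal entry of every non-principal $M$-block (it carries $B[u,v]$ in place of $A[u,v]$) and of every $A$-block (the shift $s$ in the principal $M$-block), so each such block is a $1$-near copy with its unique hole at the principal entry of the block. This is not a cosmetic point: the four outcomes (a)--(d) in parts $(i)$ and $(ii)$, with their references to row/column $1$ of the block and to the displaced native $(A[u,v],M[1,1])$ or $(A[1,1],M[u,v])$, are exactly the $k=1$ outcomes of \lref{l:nearnearcopy} for a hole at the principal entry; with $k=0$ that lemma yields only ``$S$ contains $T$'' and gives you nothing. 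Your attempt to recover the subcases from ``the $k=1$ analysis applied to the ambient near copy $P'$ of $P''$'' conflates the two roles in \lref{l:nearnearcopy}: $\pi$ is the alien entry of the \emph{ambient} square, while $k$ counts the alien entries of $T$ with respect to the $N_\infty$ square $N$; these are independent inputs.

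Second, your dismissal of the ``$S$ contains $T$'' branch does not work. That branch asserts $T\subseteq S$, which is entirely consistent with $\pi\in S\setminus T$; and $T\subseteq S$ does not make $T$ a subsquare of $P'$, so \tref{t:1sub} (which in any case concerns $P$, not the near copy $P'$) cannot be invoked. The paper has to rule out $T\subseteq S$ by a genuine argument: if $S$ contained a whole block, it would contain whole rows and columns of neighbouring blocks, \lref{l:nearnearcopy} would then force $S$ to swallow almost every block avoiding the row and column of $\pi$, and since $\mu\ge 10$ the order of $S$ would exceed the bound of \lref{l:nolargesubsq}. That cascading argument is the missing idea in your proposal. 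Your part $(iii)$ is closer to the mark, though the relevant tool for the two-alien-entry outcome is Property~7 (the matrices in \eref{e:prop7} have no subsquare of order more than two, giving $|R|=|C|=2$), not Property~6, and for $M$-blocks that outcome is excluded because the entry of $\mathcal{D}$ and the corrupted principal entry share no row or column (as $1\notin\{r_1,r_2,c_1,c_2,c_3\}$), rather than by any distributional count of $\mathcal{D}$ over blocks.
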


\begin{proof}
As $T$ is a block of $P'$ not containing $\pi$, by construction we
know that $\Phi_j(T)$ is a $k$-near copy of $N$ for some
$j\in\{1,2\}$, $k\in\{0,1,2\}$ and $N\in\{A,B,M,\beta_M\}$.  We will
first prove that $S$ cannot contain $T$.  We will prove this claim for
the case where $T$ is an $M$-block. The proof when $T$ is an $A$-block
is similar. Suppose, for a contradiction, that $S$ contains $T$.
Since $\pi\in S\setminus T$, we know that $S \neq T$.
Hence $S$ contains a whole row of another $M$-block, say
$U$, and a whole column of another $M$-block, say
$U'$. \lref{l:nearnearcopy} implies that $S$ contains $U$ also, unless
$\pi \in U$.
If $\pi \in U$ then \lref{l:nearnearcopy} implies that
$S$ contains $U'$. Either way, $S$ contains two $M$-blocks of $P'$,
which we will call $T$ and $T'$.

Let $\mathcal{A}$ be the set of $A$-blocks of $P'$ that do not contain
an alien entry of $P'$ with respect to $P$ or $P''$.  Each $A$-block
in $\mathcal{A}$ hits both $T$ and $T'$. Suppose that $\pi$ is
in row $(x, x')$ and column $(y, y')$. \lref{l:nearnearcopy} implies that $S$
contains every $A$-block in $\mathcal{A}$ which row $(x, x')$ and
column $(y, y')$ do not intersect. Since $\mu\ge10$ and
$\mathcal{D}\cup\{\pi\}$
contains all the alien entries of $P'$ with respect to $P$ or $P''$, we
know that for each $i \in [\mu] \setminus \{x'\}$ there is some
$k\in [\mu]$ such that $S$ contains the $A$-block of $P'$ in position $(i,k)$.
It follows that $R$ contains all rows not of the form $(r, x')$ for
some $r \in [\alpha]$. Hence the order of $S$ must be at least
$(\mu-1)/\mu$ times the order of $P'$. Since $\mu\ge10$, this is a
contradiction of \lref{l:nolargesubsq}, proving that $S$ does not contain
$T$.

Suppose that $T$ contains no alien entry with respect to $P$.  Then
the only possible hole in $\Phi_j(T)$ with respect to $N$ is the principal
entry.
By \lref{l:nearnearcopy} we infer that $k=1$, and
that either part $(i)$ or part $(ii)$ of the present Lemma holds.

Hence, it suffices to consider the case when part $3$ of the
$k=2$ case of \lref{l:nearnearcopy} occurs and
$T$ contains an alien entry with respect to $P$.  
If $T$ is an
$M$-block then this alien entry does not occur in the same row or
column as the principal entry of $T$, since
$1 \not\in \{r_1,r_2,c_1,c_2,c_3\}$.
Hence $T$ must be an $A$-block, and the alien entry
of $T$ with respect to $P$ must be in a cell in the set
$\{((1,r_1),(2,c_2)), ((1,r_1), (3,c_3)), ((2,r_2), (1,c_1))\}$.
\lref{l:nearnearcopy} implies that the shadow of $V$
is a subsquare in one of the matrices in \eref{e:prop7}. Property $7$ then
implies that $|R|=|C|=2$, completing the proof.
\end{proof}

An idea that we will use repeatedly is to consider the projection of a
hypothetical subsquare onto one of the factors in a corrupted product.
Our next lemma shows one circumstance where we know this projection is
a Latin square.

\begin{lem}\label{l:projLS}
Let $F$ be any matrix whose row indices, column indices and symbol
set are $[\alpha]\times[\mu]$ and let $S$ be a subsquare of $F$ of
order $t$. Suppose, for some $i \in [2]$, that the projection
$\Phi_i$ is injective on the rows and columns of $S$ and that there
is some Latin square $L$ such that $\Phi_i(S)$ agrees with its
shadow in $L$ in all but $\ell < t$ entries. Then $\Phi_i(S)$ is a
Latin square.
\end{lem}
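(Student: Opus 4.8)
The plan is to show that if $\Phi_i(S)$ were not a Latin square, it would be forced to disagree with its shadow in $L$ in at least $t$ cells, contradicting the hypothesis $\ell < t$. First I would record that $\Phi_i(S)$ is genuinely a $t \times t$ array. Because $\Phi_i$ is injective on the rows of $S$ and on the columns of $S$, the $t$ rows of $S$ map to $t$ distinct values and likewise for columns, so distinct cells of $S$ map to distinct cells; thus $\Phi_i(S)$ occupies a full $t \times t$ grid indexed by $\{\Phi_i(r) : r \text{ a row of } S\}$ and $\{\Phi_i(c) : c \text{ a column of } S\}$. In particular, $\Phi_i(S)$ can fail to be a Latin square only through a repeated symbol within some row or column.

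The key observation is that every row of $\Phi_i(S)$ carries the same symbol multiset. Since $S$ is a Latin square of order $t$, each of its rows contains all $t$ symbols of $S$ exactly once; applying $\Phi_i$, every row of $\Phi_i(S)$ contains precisely the multiset $\mathcal{M} = \{\Phi_i(\sigma) : \sigma \text{ a symbol of } S\}$, and the same holds for each column. Hence $\Phi_i(S)$ is a Latin square if and only if the elements of $\mathcal{M}$ are distinct, i.e. if and only if $\Phi_i$ is injective on the symbols of $S$.

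So suppose for contradiction that some value $\phi$ occurs with multiplicity $m \ge 2$ in $\mathcal{M}$. Then $\phi$ appears exactly $m$ times in each of the $t$ rows of $\Phi_i(S)$, hence $mt$ times in total. On the other hand, the shadow of $\Phi_i(S)$ in $L$ is the $t \times t$ submatrix of the Latin square $L$ on the rows and columns occupied by $\Phi_i(S)$, so $\phi$ occurs at most once per row of the shadow and therefore at most $t$ times in it. Consequently, among the $mt$ cells in which $\Phi_i(S)$ displays $\phi$, at most $t$ can agree with the shadow (an agreement would put $\phi$ in the shadow there), leaving at least $(m-1)t \ge t$ cells of disagreement. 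This forces $\ell \ge t$, contradicting $\ell < t$. Therefore $\mathcal{M}$ has distinct elements and $\Phi_i(S)$ is a Latin square.

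The argument is short, and I would flag the only point requiring genuine care as the well-formedness step: verifying that $\Phi_i(S)$ is a bona fide $t \times t$ array in which every row shares the multiset $\mathcal{M}$. This is exactly where the injectivity of $\Phi_i$ on the rows and columns of $S$ is used, and it is what converts a single potential symbol collision into a linear number ($(m-1)t$) of forced disagreements with $L$; the rest is a one-line count against the Latin property of $L$.
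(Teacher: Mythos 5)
Your proof is correct and follows essentially the same route as the paper's: the paper likewise notes that each symbol of $\Phi_i(S)$ occurs a multiple of $t$ times while the shadow, being a $t\times t$ submatrix of the Latin square $L$, contains each symbol at most $t$ times, so fewer than $t$ disagreements force $\Phi_i$ to be injective on the symbols of $S$. Your write-up merely makes the counting of forced disagreements explicit.
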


\begin{proof}
Since the projection of $S$ onto $L$ is injective it
follows that $\Phi_i(S)$ is a $t \times t$ matrix. As $S$ is a Latin
square it follows that every symbol in $\Phi_i(S)$ occurs some
multiple of $t$ times. Since $\Phi_i(S)$ agrees with its shadow in
$L$ in all but $\ell<t$ places it follows that $\Phi_i$ is injective
on the set of symbols that occur in $S$ and hence $\Phi_i(S)$ is a
Latin square.
\end{proof}

Our next task is to show that $Q$ is $N_\infty$, which we do with the
following three lemmas.

\begin{lem}\label{l:1rho}
Any proper subsquare of $Q$ must contain exactly one entry from $\mathcal{D}$.
\end{lem}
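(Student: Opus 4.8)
Let me understand what we need to prove. We have $P = (A,B) *_s M$, a corrupted product whose only proper subsquare is $\beta_M$ (the principal $M$-block) by Theorem~\ref{t:1sub}. We formed $Q$ from $P$ by switching on a length-3 row cycle $\rho((1,r_1),(2,r_2),(3,c_3))$, which destroyed $\beta_M$. After switching, $Q$ contains the length-3 row cycle $\mathcal{D} = \rho((1,r_1),(2,r_2),(3,c_3))$. We need to show: any proper subsquare of $Q$ contains exactly one entry from $\mathcal{D}$.

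**Key structural observations.**

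First, $Q$ is a 6-near copy of $P$ (the switch on a length-3 row cycle changes 6 entries). The six entries of $\mathcal{D}$ are precisely... wait, let me think about what the alien entries are. Actually, the switch swaps entries between rows $(1,r_1)$ and $(2,r_2)$ in three columns. So the new entries in $Q$ differ from $P$ in exactly these 6 cells — these are the alien entries of $Q$ with respect to $P$, and they lie in $\mathcal{D}$.

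The overall structure of $Q$ mirrors a direct product $A \times M$ with perturbations: it decomposes into $A$-blocks and $M$-blocks. The principal $M$-block of $P$ was $\beta_M$ (a subsquare), but the switch has disrupted it.

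**Proof strategy.**

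I want to show no proper subsquare $S$ of $Q$ can contain more than one, or zero, entries of $\mathcal{D}$ — wait, re-reading: the claim is that *every* proper subsquare contains *exactly one* entry from $\mathcal{D}$. So I need two things:
- (a) No proper subsquare contains zero entries of $\mathcal{D}$.
- (b) No proper subsquare contains two or more entries of $\mathcal{D}$.

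For (a): If $S$ avoids $\mathcal{D}$ entirely, then $S$ is disjoint from the six switched cells, so $S$ is also a subsquare of $P$ (the entries agree with $P$ off $\mathcal{D}$). But the only proper subsquare of $P$ is $\beta_M$. So I'd need to check whether $\beta_M$ avoids $\mathcal{D}$. Since $1 \notin \{r_1,r_2,c_1,c_2,c_3\}$ (from Condition $(iii)$) and $c_3 \neq 1$, the entries of $\mathcal{D}$ — which lie in rows $(1,r_1),(2,r_2)$ and columns involving $c_1,c_2,c_3$ — do they meet $\beta_M$? The subsquare $\beta_M$ sits in rows/columns $\{(1,j): j\in[\mu]\}$. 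The row $(1,r_1)$ is in $\beta_M$'s rows, but $\beta_M$'s columns are $(1,*)$ while the switched columns are $(3,c_3)$-type... I need to verify $\mathcal{D}$ meets $\beta_M$, so that destroying $\mathcal{D}$ destroys $\beta_M$ — which is the whole point of the switch. Concretely, one of the $\mathcal{D}$ entries lies in $\beta_M$, so $\beta_M$ is no longer a subsquare of $Q$, ruling out (a).

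For (b): This is the harder part. Suppose $S$ is a proper subsquare of $Q$ containing at least two entries of $\mathcal{D}$. Here I would apply Lemma~\ref{l:near3cyc} or Lemma~\ref{l:near3cycalt}. These are tailor-made: they describe how a subsquare can intersect a structure like $\mathcal{D}$ in a near copy. Since $Q$ is a near copy of some Latin square (after focusing on a single alien entry $\pi$), the length-3 cyclic structure $\mathcal{D}$ with its symbols $\{\sigma,\tau(\sigma),\tau^2(\sigma),\sigma+s\}$ matches the hypotheses of Lemma~\ref{l:near3cycalt} (note the shift-by-$s$ in the last entry). The conclusions of those lemmas force $S \cap \mathcal{D}$ to be very small — at most one or two specific entries — and combined with the intercalate-free conditions in Condition $(iii)$ (namely $M[r_2,c_3]+s \inc M[r_2,c_3]$ has no intercalate), I can rule out two-entry intersections.

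**The main obstacle.**

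The delicate part is that $Q$ is a 6-near copy of $P$, not a 1-near copy, so I cannot directly invoke lemmas stated for ordinary near copies. The trick will be to view $Q$ (or $Q$ minus five of the six alien entries) as a near copy of an intermediate Latin square, so that exactly one entry of $\mathcal{D}$ plays the role of the single alien entry $\pi$, and the remaining five entries of $\mathcal{D}$ are treated as native structure forming the row-cycle $\mathcal{D}$. Then Lemmas~\ref{l:near3cyc}/\ref{l:near3cycalt} apply with this $\pi$. I would likely argue: pick the entry of $\mathcal{D}$ that $S$ contains to be $\pi$; the configuration of the remaining entries matches $\mathcal{D}$ in those lemmas; then the conclusion pins down $S \cap \mathcal{D}$ to be at most that single entry, contradicting the assumption of two entries. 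Ensuring the symbol-distinctness hypotheses (the four symbols $k_1,\dots,k_4$ in Lemma~\ref{l:near3cycalt} are distinct) holds is where Condition $(iii)$'s requirement that $\sigma,\tau(\sigma),\tau^2(\sigma),\sigma+s$ avoid collisions (via $L[1,1]+s \notin \{z_i\}$ and the intercalate-freeness) will be used.
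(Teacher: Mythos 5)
Your treatment of the zero-entry case matches the paper: if $S$ avoids $\mathcal{D}$ then $S=P[R,C]$ is a proper subsquare of $P$, hence equals $\beta_M$ by \tref{t:1sub}; but $\beta_M$ contains the cell $((1,r_1),(1,c_1))$, which is one of the six cells of $\mathcal{D}$, a contradiction. That half is fine.

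The two-or-more-entry case is where your plan has a genuine gap, on two counts. First, the tool you reach for is the wrong one for this lemma: $Q$ is an honest Latin square (the near-copy complications only arise later, for $Q'$), and $\mathcal{D}$ is a genuine row cycle of length $3$ in $Q$, so the relevant statement is the elementary \lref{l:3cyc}, not \lref{l:near3cyc} or \lref{l:near3cycalt}; your ``main obstacle'' about $Q$ being a $6$-near copy of $P$ is a non-issue here. Moreover, your claim that those lemmas ``force $S\cap\mathcal{D}$ to be very small'' is false as stated: the first alternative in \lref{l:near3cyc} (and the conclusion of \lref{l:3cyc}) is precisely $S\cap\mathcal{D}=\mathcal{D}$, so nothing you cite excludes $S$ swallowing the whole cycle. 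Second, and more importantly, the actual contradiction comes from a step you never articulate: once $S\supseteq\mathcal{D}$, undoing the switch inside $S$ shows that $P[R,C]$ is a proper subsquare of $P$ containing the entire row cycle $\rho((1,r_1),(2,r_2),(3,c_3))$; by \tref{t:1sub} that subsquare would have to be $\beta_M$, which does not contain column $(3,c_3)$. Your proposed substitute --- ruling out two-entry intersections via the intercalate-freeness in Condition $(iii)$ --- does not address the case that must actually be killed, namely full containment of $\mathcal{D}$. Without the ``switch back and invoke \tref{t:1sub}'' step, the argument does not close.
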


\begin{proof}
Let $S$ be a proper subsquare of $Q$. Let $R$ be the set of rows of
$S$ and let $C$ be the set of columns of $S$. First suppose that $S$
does not contain an entry from $\mathcal{D}$. Then $P[R, C]$ is a
proper subsquare of $P$, which can only be $\beta_M$ by
\tref{t:1sub}. But $P[R, C] \neq \beta_M$ as otherwise $S$ would
contain an entry from $\mathcal{D}$. Now suppose that $S$ contains at
least two entries from $\mathcal{D}$. Then \lref{l:3cyc} implies that
$S$ contains every element from $\mathcal{D}$. It follows that
$P[R,C]$ is a proper subsquare of $P$ that contains all entries from
$\rho((1,r_1), (2,r_2), (3,c_3))$, but $P$ has no such subsquare.
\end{proof} 

\begin{lem}\label{l:q2}
Any proper subsquare of $Q$ hits every $M$-block of $Q$ at most once.
Also, any proper subsquare of $Q$ hits the principal $A$-block of $Q$
at most once.
\end{lem}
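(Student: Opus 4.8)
The plan is to fix a proper subsquare $S$ of $Q$ and, by \lref{l:1rho}, to let $\pi$ be the unique entry of $S$ lying in $\mathcal{D}$. To bring \lref{l:blocksP} to bear I would introduce the auxiliary matrix $P'$ obtained from $P$ by changing the single cell of $\pi$ to the value it has in $Q$. Then $P'$ is a near copy of the Latin square $P$ with associated alien entry $\pi$, it is a $1$-near copy of $P$, and $\mathcal{D}\cup\{\pi\}$ contains all of its alien entries with respect to $P$. Every entry of $S$ agrees in $Q$ and $P'$, since the entries of $S$ other than $\pi$ are native with respect to $P$ and $\pi$ has the same value in $Q$ and in $P'$; hence $S$ is a subsquare of $P'$ containing $\pi$, and \lref{l:blocksP} applies with $P''=P$. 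I would also record once and for all that, because $\pi\in\mathcal{D}$, the row of $\pi$ has second coordinate in $\{r_1,r_2\}$ and its column has second coordinate in $\{c_1,c_2,c_3\}$; as $1\notin\{r_1,r_2,c_1,c_2,c_3\}$, the entry $\pi$ lies in no row or column of second coordinate $1$.

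Consider first any block $T$ not containing $\pi$; in $P'$ such a $T$ carries no alien entry with respect to $P$. For the principal $A$-block this holds automatically, since no entry of $\mathcal{D}$ lies in position $(1,1)$. If $S$ met the principal $A$-block more than once I would apply part $(ii)$ of \lref{l:blocksP}; because that block sits in position $(1,1)$, each of the conclusions $(a)$--$(d)$ places $\pi$ in a row or column of second coordinate $1$, which is impossible, and this settles the claim about the principal $A$-block. For an $M$-block $T$ not containing $\pi$ I would instead use part $(i)$: if $S$ met $T$ more than once then $T$ is not the principal $M$-block and one of $(a)$--$(d)$ holds, and conclusions $(a)$ and $(b)$ are excluded by the same coordinate observation.

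The substance of the argument, and the step I expect to be the main obstacle, is ruling out conclusions $(c)$ and $(d)$ of part $(i)$, which the coordinate bookkeeping alone does not kill. The extra ingredient there is that one cell of $V=S\cap T$ carries the symbol $(A[u,v],M[1,1])$, namely the value of the principal entry of $T$. Since $S$ is a Latin square this symbol must recur in the principal column (case $(c)$) or principal row (case $(d)$) of $T$, which is a line of $S$; I would check that its unique occurrence there is the principal cell of $T$ itself, using that the line in question meets no cell of $\mathcal{D}$, is untouched by the shift, and that $A$ is a Latin square. But then $S$ would contain the principal cell of $T$, producing a third cell of $S\cap T$ and contradicting the one-line shape of $V$ asserted in $(c)$ and $(d)$.

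It remains to bound the intersection of $S$ with the $M$-block $T_\pi$ that contains $\pi$, where \lref{l:blocksP} does not apply directly. Here I would argue that if $S$ met $T_\pi$ more than once then $\Phi_1$ would fail to be injective on the rows or on the columns of $S$, and combining this with the bound just proved for every other $M$-block forces all rows of $S$ into a single row band and all columns into a single column band, so that $S\subseteq T_\pi$. Since $T_\pi$ is an $M$-block, all of its native entries share one $A$-coordinate, whereas a short case check over the six possibilities for $\pi$ shows that the value of $\pi$ in $Q$ has a different $A$-coordinate. Consequently $\pi$'s symbol would occur exactly once in $S$, which is impossible in a Latin square of order at least $2$. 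This completes the proof.
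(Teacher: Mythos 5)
Your overall architecture is the same as the paper's: pass to the near copy $P'$ of $P$ with alien entry $\pi$, invoke \lref{l:1rho} and \lref{l:blocksP}, kill cases $(a)$, $(b)$ and the principal $A$-block via the observation that $1\notin\{r_1,r_2,c_1,c_2,c_3\}$, and finish the block containing $\pi$ by forcing $S\subseteq T_\pi$ and comparing $A$-coordinates. But your treatment of cases $(c)$ and $(d)$ — which you correctly identify as the crux — contains a genuine error. You assert that the principal entry of the $M$-block $T$ in position $(u,v)$ carries the symbol $(A[u,v],M[1,1])$. It does not: in the corrupted product the cells whose row and column both have second coordinate $1$ form the principal $A$-block, which is a copy of $B$, so that entry carries $(B[u,v],M[1,1])$, and $B[u,v]\neq A[u,v]$ since $T$ is not the principal $M$-block. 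Hence the forced second occurrence of $(A[u,v],M[1,1])$ in column $(v,1)$ is \emph{not} the principal cell of $T$; it is the principal cell of a different $M$-block $T''$ in position $(u',v)$ where $B[u',v]=A[u,v]$, and your intended contradiction (``a third cell of $S\cap T$'') never materialises. The correct continuation is the paper's: Property $4$ forces $u'>2$ (using that $u=i\in[2]$, which follows from matching the $A$-coordinate of $\pi$'s symbol with $P'[(u,r),(v,1)]$), so $T''$ is alien-free and does not contain $\pi$, yet $S$ meets it in row $(u',1)$ and in the two columns $(v,1),(v,c)$, which is incompatible with every case of \lref{l:blocksP}$(i)$.

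The same oversight creates a smaller gap in your final step. Once $S\subseteq T_\pi$, it is not true that all native entries of $T_\pi$ share one $A$-coordinate: unless $T_\pi$ is the principal $M$-block, its principal entry has $A$-coordinate $B[i',x]$ rather than $A[i',x]$. So $\pi$'s symbol could a priori occur a second time in $S$, namely at that principal entry; you need Property $4$ once more to exclude $B[i',x]=A[i,x]$ before concluding that $\pi$'s symbol occurs only once. Both gaps come from treating $P$ as if it were the direct product $A\times M$ with only the shift perturbation, forgetting the $B$-corruption of the principal $A$-block; you should rework those two steps with the actual definition of the corrupted product in hand.
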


\begin{proof}
Let $S$ be a proper subsquare of $Q$. Let $R$ be the set of rows of $S$ and let $C$ be the set of columns of $S$. By \lref{l:1rho} we know that $S$ contains exactly one entry from $\mathcal{D}$. Let this entry be in cell $((i', j'), (x, y))$ for some $i' \in [2]$, $x \in [3]$, $\{j', y\} \subseteq [\mu] \setminus \{1\}$. So $Q[(i', j'), (x, y)] = P[(i, j), (x, y)]$ for $i \in [2] \setminus \{i'\}$ and some $j \in [\mu] \setminus \{1\}$. Let $P'$ denote the matrix $P[(i, j), (x, y)] \inc P[(i', j'), (x, y)]$ and let $\pi = ((i', j'), (x, y), P[(i, j), (x, y)])$ be the alien entry of $P'$ with respect to $P$. Then $S' = P'[R, C]$ is a proper subsquare of $P'$. We will first show that $S'$ contains at most one entry from every $M$-block of $P'$. 
Denote the $M$-block of $P'$ that contains $\pi$ by $T$. Let $T' \neq T$
be an $M$-block of $P'$, so that $T'$ has no alien entries with
respect to $P$. Let the position of $T'$ be $(u, v)$ and suppose that
$S'$ contains at least two entries from $T'$.
Since $1 \not\in \{j', y\}$, we know case $(i)(a)$ and $(i)(b)$
of \lref{l:blocksP} do not arise. So we may assume that case $(i)(c)$
occurs (the argument for case $(i)(d)$ is similar).

Hence we are assuming that $S' \cap T'$ has only row $(u, r)$ and columns $(v, 1)$ and $(v, c)$ for some $\{r, c\} \subseteq [\mu] \setminus \{1\}$ and $(v, c) = (x, y)$. Let $\nu_1=P[(i, j), (x, y)] = P'[(u, r), (v, 1)]$ be the symbol
in $\pi$ and $\nu_2=P'[(u, r), (v, c)] = (A[u, v], M[1,1])$.
Since $\Phi_1(\nu_1)=A[u, x]$, it follows that $u=i\in[2]$.
Let $u' \in [\alpha]$ be such that $B[u', v] = A[u, v]$. Note that Property $4$ implies that $u'>2$. Since $S'$ must contain symbol $\nu_2$ in column $(v, 1)$ it follows that $S'$ contains the entry $((u', 1), (v, 1), (B[u', v], M[1,1]))$. Let $T''$ be the $M$-block of $P'$ in position $(u', v)$. Since $\pi$ occurs in an $M$-block whose position is in the set $[2] \times [3]$ it follows that $\pi \not\in T''$.
Now running the above argument with $T''$ in place of $T'$,
we obtain the contradiction that $u'\in[2]$.

Next we show that $|S'\cap T|=1$. The position of $T$ is $(i',x)$. Suppose that $|S'\cap T|\ge2$ and $S'$ contains two rows that hit $T$. If $S'$ has a column $(c, c')$ where $c \neq x$, then $S'$ contains more than one entry from the $M$-block of $P'$ in position $(i', c)$, which is false. Hence every column of $S'$ hits $T$. Then by similar reasoning, every row of $S'$ hits $T$,
meaning that $S'$ is contained within $T$. We reach the same conclusion if
we start with an assumption that $S'$ contains two columns that hit $T$.
So $S'\subseteq T$.
All symbols in $T$ other than those in $\pi$ and the principal entry have $A$-coordinate $A[i', x]$. However, the symbol in $\pi$ has $A$-coordinate $A[i,x] \neq A[i',x]$, and it must occur in every row of $S'$. The only way this might happen is if the principal entry of $T$ has the same symbol as $\pi$. But that would require that $B[i', x] = A[i, x]$, which contradicts Property $4$. Hence
$S'$ must intersect every $M$-block of $P'$ at most once.

Finally, we suppose that $S'$ contains more than one entry from the principal $A$-block of $P'$. \lref{l:blocksP} implies that $\pi$ occurs in row $(r,1)$ for some $r \in [\alpha]$, or in column $(c,1)$ for some $c \in [\alpha]$. However, this contradicts that $1\notin\{j', y\}$.
\end{proof}

\begin{lem}\label{l:qninf}
The Latin square $Q$ is $N_\infty$.
\end{lem}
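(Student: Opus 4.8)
The plan is to assume, for a contradiction, that $Q$ has a proper subsquare $S$ of order $t \geq 2$, and to analyse it through its projection onto $A$. By \lref{l:1rho}, $S$ meets $\mathcal{D}$ in exactly one entry, which (exactly as in the proof of \lref{l:q2}) lies in a cell $((i',j'),(x,y))$ with $i' \in [2]$, $x \in [3]$ and $\{j',y\}\subseteq[\mu]\setminus\{1\}$, and equals $P[(i,j),(x,y)]$ for the row $(i,j)$ with $\{i,i'\}=[2]$ and some $j\neq 1$. Writing $\pi$ for this alien entry and $P'=P[(i,j),(x,y)]\inc P[(i',j'),(x,y)]$, the matrix $S'=P'[R,C]$ is a subsquare of $P'$ of order $t$ containing $\pi$, where $R$ and $C$ are the rows and columns of $S$. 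By \lref{l:q2}, $S'$ meets each $M$-block of $P'$ at most once; since $S'$ is the full grid $R\times C$, this is equivalent to $\Phi_1$ being injective on $R$ and on $C$. Again by \lref{l:q2}, $S'$ meets the principal $A$-block at most once.

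First I would compare $\Phi_1(S')$ with its shadow in $A$. For a cell of $S'$ that lies outside the principal $A$-block and differs from $\pi$, its $A$-coordinate equals the value of $A$ at the projected cell, since the shift on the principal $M$-block alters only the $M$-coordinate. Hence $\Phi_1(S')$ agrees with its shadow in $A$ except at the projection of $\pi$, which occupies $A$-cell $(i',x)$ with value $A[i,x]\neq A[i',x]$, and at the at-most-one principal $A$-block cell, say in $A$-cell $(a,c)$ with value $B[a,c]$. Injectivity of $\Phi_1$ forces $(a,c)\neq(i',x)$. By Property $2$ the near copy $A[i,x]\inc A[i',x]$ is precisely one of the six matrices listed in \eref{e:asub}.

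The main case is $t\geq 3$. Here the number of disagreements is at most $2<t$, so \lref{l:projLS} shows that $\Phi_1(S')$ is a Latin square. If $S'$ misses the principal $A$-block, then $\Phi_1(S')$ is a subsquare of order $t\geq 3$ of a matrix in \eref{e:asub}, contradicting Property $5$ (which permits only an intercalate, and only inside $d_1\inc A[1,3]$). If $S'$ meets the principal $A$-block, then $\Phi_1(S')$ is a subsquare of $B[a,c]\inc C[a,c]$ with $C=A[i,x]\inc A[i',x]$, a matrix in \eref{e:a2sub}; as $\Phi_1(S')$ contains the two alien entries at $(i',x)$ and $(a,c)$, Property $6$ forces it to be an intercalate, again contradicting $t\geq 3$. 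Either way we reach a contradiction, so $t\leq 2$.

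It remains to rule out $t=2$, that is, an intercalate $S$ of $Q$ meeting $\mathcal{D}$ exactly once; this is the main obstacle, because now the disagreement count can equal $t$ and the clean projection argument via \lref{l:projLS} need not apply. I would first note that $P$ itself has no intercalate: by \tref{t:1sub} its only proper subsquare is $\beta_M$, which is isotopic to the $N_\infty$ square $M$ and hence intercalate-free, so $S'$ genuinely uses $\pi$. I would then split on whether $S'$ meets the principal $A$-block. When it does not, only $\pi$ disagrees, so \lref{l:projLS} makes $\Phi_1(S')$ an intercalate in a matrix of \eref{e:asub}; by Property $5$ it must be the intercalate of $d_1\inc A[1,3]$, which forces $x=3$ and identifies $\pi$ with the switched entry in column $(3,c_3)$, whereupon reading off the $M$-coordinates of the four cells yields an intercalate in the near copy $M[r_2,c_3]\inc M[r_1,c_3]$ of $M$, which Condition $(iii)$ explicitly forbids. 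When $S'$ does meet the principal $A$-block, its two \emph{off-diagonal} cells pair $\pi$ (whose $M$-coordinates are both nonzero) with a principal $A$-block cell (whose $M$-coordinates are both $1$) at opposite corners; here I would invoke the final clause of Property $6$ together with Property $4$ to derive a contradiction. Combining the two cases shows that $Q$ has no proper subsquare, so $Q$ is $N_\infty$.
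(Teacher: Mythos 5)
Your proposal follows the paper's own route very closely: the same reduction to $S'=P'[R,C]$ via \lref{l:1rho} and \lref{l:q2}, the same projection onto $A$ with at most two disagreements, and the same appeal to \lref{l:projLS}, Property $5$ and Property $6$. Everything through the elimination of $t\ge3$ is sound (your case split on $t$ rather than on the number $v$ of disagreements is only a reorganisation). The problems are in the endgame for $t=2$.

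First, in the sub-case where $S'$ misses the principal $A$-block, ``reading off the $M$-coordinates of the four cells'' is not yet legitimate: a cell of $S'$ could lie in the principal $M$-block, where the symbol's $M$-coordinate is $M[\cdot,\cdot]+s$ rather than $M[\cdot,\cdot]$, and then the four cells would not yield an intercalate of $M[r_2,c_3]\inc M[r_1,c_3]$. The paper closes this by observing that cells $(1,1)$ and $(1,3)$ of $d_1\inc A[1,3]$ carry the same symbol $d_1$, so the intercalate $\Phi_1(S')$ cannot contain cell $(1,1)$, forcing $c\neq1$ and hence keeping $S'$ out of the principal $M$-block. Second, and more seriously, your closing move for the sub-case where $S'$ meets the principal $A$-block does not work as stated: Property $4$ only constrains $A[i,j]=B[i',j']$ for \emph{both} index pairs in $[2]\times[3]$, whereas the principal-$A$-block cell hit by $S'$ projects to an arbitrary $(a,c)\in[\alpha]^2$, so $A[i,x]=B[a,c]$ cannot be excluded this way (indeed Property $6$ implicitly allows such intercalates). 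The actual contradiction comes from the other coordinate: after using the final clause of Property $6$ \emph{together with the injectivity of $\Phi_1$} to show $S'$ avoids the principal $M$-block, the opposite-corner symbol match forces $M[1,1]\in\{M[j,y],M[j,y]+s\}$ with $M[j,y]$ a symbol of the configuration \eref{e:proto3cyc}, which is exactly what Condition $(iii)$ forbids. Until that $M$-coordinate argument is supplied, the $t=2$ case is not closed.
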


\begin{proof}
Suppose that $S$ is a proper subsquare of $Q$. Let $R$ be the set of rows of $S$ and let $C$ be the set of columns of $S$. \lref{l:q2} tells us that $S$ intersects every $M$-block of $Q$ at most once, and hits the principal $A$-block of $Q$ at most once. \lref{l:1rho} says that $S$ contains exactly one entry from $\mathcal{D}$. Let this entry be $\pi = ((i', j'), (x, y), P[(i, j), (x, y)])$ for some $\{i, i'\} = [2]$, $x \in [3]$ and $\{j, j', y\} \subseteq [\mu] \setminus \{1\}$. Denote the matrix $P[(i, j), (x, y)] \inc P[(i', j'), (x, y)]$ by $P'$. Then $S' = P'[R, C]$ is a proper subsquare of $P'$ that hits every $M$-block of $P'$ at most once. It follows that $\Phi_1$ is injective on $R$ and $C$.
Let the order of $S'$ be $t$. So $\Phi_1(S')$ is a $t \times t$ matrix that agrees with its shadow in $A$ in all but $v \in \{1,2\}$ entries.
Indeed, $v=1$ unless $S'$ contains the principal entry of some $M$-block of $P'$ other than the principal $M$-block.
\lref{l:projLS} implies that $\Phi_1(S')$ is a Latin square unless $t=v=2$. If $t=v=2$ then $\Phi_1(S')$ is either an intercalate or contains only one symbol, since each symbol in it occurs a multiple of $t$ times.

Suppose that $v=1$. Then $\Phi_1(S')$ is a subsquare of one of the matrices in the set \eref{e:asub}. Property $5$ implies that $S'$ is an intercalate, $i'=1$, $i=2$, $x=3$, $j' = r_1$, $j=r_2$ and $y = c_3$. We can write $R = \{(1,r_1), (r, r')\}$ and $C = \{(3,c_3), (c, c')\}$ for some $\{r, c\} \subseteq [\alpha]$ and $\{r', c'\} \subseteq [\mu]$. Since the symbols in cells $(1,1)$ and $(1,3)$ of the matrix $d_1 \inc A[1,3]$ agree, we know that $\Phi_1(S')$ does not contain the entry in cell $(1,1)$. It follows that $c\ne1$.
As $S'$ is an intercalate, we must have $M[r_2,c_3] = M[r', c']$ and $M[r_1,c'] = M[r', c_3]$. Since $r_1 \neq r_2$ it follows that the matrix $M[r_2,c_3] \inc M[r_1,c_3]$ must contain an intercalate, which contradicts Condition $(iii)$.

Thus we may assume that $v=2$ and $\Phi_1(S')$ is a Latin square, or a
$2 \times 2$ matrix with only one symbol. Also $\Phi_1(S')$, which is a
submatrix of one of the matrices in the set \eref{e:a2sub},
contains two alien entries with respect to $A$. Property $6$ then
tells us that $\Phi_1(S')$ is an intercalate, and hence so is $S'$.
Additionally, from $1\notin\{y,j'\}$ we know that the two alien
entries of $S'$ occur in different rows and columns. From Property $6$
and the injectivity of $\Phi_1$ on $S'$, we deduce
that $S'$ does not hit the principal $M$ block. Since $S'$ is an
intercalate, we need the symbols in its two alien entries to
match. But that requires $M[1,1]\in\{M[j,y],M[j,y]+s\}$, in
contradiction of Condition $(iii)$.  Thus $S$ cannot exist.
\end{proof}

We now work on showing for every cell $(\gamma, \delta)$ that $Q'$ has
no subsquare of order $\alpha$. Recall that
$Q'=(Q[\gamma,\delta]+(1,0)) \inc Q[\gamma, \delta]$.  For the rest of
this section let $\pi$ be the alien entry of $Q'$ with respect to $Q$
and let $\mathcal{D}'$ denote the set of entries of $Q'$ that occupy a
cell of some entry in $\mathcal{D}$. By definition,
$\mathcal{D}'\subseteq(\mathcal{D}\cup\{\pi\})$.  On several occasions
we will use that, by \lref{l:qninf}, any proper subsquare of $Q'$ must
contain $\pi$, and that $\Phi_2$ cannot distinguish between $\pi$ and
the corresponding entry in $Q$.

\begin{lem}\label{l:not3W}
Let $W$ be a block of $Q'$ containing $\pi$. Then no subsquare of $Q'$
of order $\alpha$ hits $W$ in at least three rows or in at least three
columns.
\end{lem}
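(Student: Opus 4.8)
The plan is to argue by contradiction. Suppose $S$ is a subsquare of $Q'$ of order $\alpha$ that meets $W$ in at least three rows; the case of at least three columns is handled by the symmetric argument with the roles of rows and columns interchanged. Since $Q$ is $N_\infty$ by \lref{l:qninf} and $1<\alpha<\alpha\mu$, the subsquare $S$ must contain $\pi$. The structural asymmetry I will exploit is that \lref{l:blocksP} applies to every block of $Q'$ except the (at most two) blocks through the cell of $\pi$, and in particular to every block other than $W$: its parts $(i)$ and $(ii)$ show that a block carrying no alien entry of $Q'$ with respect to $P$ is met by $S$ in at most two rows and at most two columns, while part $(iii)$ caps a block carrying an alien entry at three rows and three columns, with the smaller of the two at most two.

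The main step is a cascade that confines $S$ to $W$. Consider the band of blocks of the same type as $W$ that share the rows of $W$, namely the $M$-blocks in the block-row of $W$ if $W$ is an $M$-block, or the $A$-blocks sharing the $M$-row of $W$ if $W$ is an $A$-block. By hypothesis every block $U$ in this band is met by $S$ in the same three-or-more rows that $S$ shares with $W$. If such a $U\neq W$ carried no alien entry, then \lref{l:blocksP} would limit $S$ to two rows of $U$, a contradiction; hence every $U\neq W$ that $S$ meets in a column must contain an alien entry. But the alien entries of $Q'$ with respect to $P$ all lie in $\mathcal D\cup\{\pi\}$, where $\mathcal D$ occupies only two block-rows and three block-columns (those determined by $r_1,r_2$ and $c_1,c_2,c_3$) and $\pi$ lies in $W$. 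Thus only boundedly many blocks of the band can absorb columns of $S$, and by part $(iii)$ each absorbs at most two; hence all but a bounded set of the columns of $S$ lie in the band of $W$. Running the same argument with rows and columns reversed concentrates the rows of $S$ as well, and in the generic position, where the bands of $W$ avoid the $\mathcal D$-rows, the $\mathcal D$-columns and the principal blocks, this forces $S\subseteq W$.

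Once $S\subseteq W$ I derive a contradiction according to the type of $W$. If $W$ is an $M$-block, then $\Phi_2$ is injective on the rows and columns of $S$, and $\Phi_2$ maps every entry of $W$ (including the $B$-substituted principal cell, and $\pi$, whose $M$-coordinate is unchanged) to its shadow value in $M$; thus $\Phi_2(S)$ agrees with its shadow in $M$ in all but at most one entry, so \lref{l:projLS} makes $\Phi_2(S)$ a Latin square, hence a subsquare of order $\alpha$ of the $N_\infty$ square $M$ with $1<\alpha<\mu$, which is impossible. If instead $W$ is an $A$-block, then $S\subseteq W$ and $|S|=\alpha=|W|$ force $S=W$; but the principal entry of every $A$-block of $P$ lies in the principal $M$-block and so carries the shift $s\neq0$ on its $M$-coordinate, producing a symbol whose $M$-coordinate differs from the constant value of the block and which therefore occurs fewer than $\alpha$ times in $W$. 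Hence $Q'[W]$ is not a Latin square of order $\alpha$ (equivalently, its defect from a square isotopic to $A$ is too small to be a Latin trade, by \lref{l:mintrade}), so it cannot be the subsquare $S$.

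The main obstacle is the collection of boundary cases suppressed by the word ``generic'' above: when a band of $W$ meets one of the two block-rows or three block-columns carrying $\mathcal D$, or meets a principal block. There the cascade confines $S$ only up to a bounded number of stray rows or columns lying in the special bands, where part $(iii)$ of \lref{l:blocksP} permits up to two lines each. Closing these requires tracking the few entries of $\mathcal D$ individually and re-running the projection or Latin-trade contradiction for the slightly enlarged configuration; here the subconditions built into $(M,s)\in\X$, in particular $1\notin\{r_1,r_2,c_1,c_2,c_3\}$ together with the intercalate-free and shift requirements of Condition $(iii)$, are exactly what make these residual configurations collapse.
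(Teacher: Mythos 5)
Your overall strategy matches the paper's (use \lref{l:blocksP} to confine $S$ to $W$, then kill the case $S\subseteq W$ by projecting), but there are two genuine gaps. First, the confinement step is not row/column symmetric, because the alien entries of $Q'$ with respect to $P$ coming from $\mathcal{D}$ occupy two block-rows but three block-columns. The paper handles ``at least three columns'' and ``at least three rows'' by different arguments; the latter requires an explicit analysis of a residual configuration ($\alpha=8$, the block-position of $W$ outside the special range in one coordinate, exactly two columns of $S$ hitting $W$, and exactly two columns of $S$ in each of the three special block-columns), which is closed by locating a row of $S$ outside the first two block-rows and using \lref{l:blocksP} to force $\pi$ into the wrong block-column. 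Your ``generic position'' covers only the easy case: your own count ($\le 6$ stray columns against $\alpha\ge 8$) can leave as few as two columns of $S$ hitting $W$, which is not enough to restart the cascade in the transposed direction, so the boundary cases you defer are exactly where the proof lives.

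Second, the endgame for $S\subseteq W$ is incorrect as stated. If $W$ is one of the six $M$-blocks (or six $A$-blocks) meeting $\mathcal{D}$, then $S$ may contain an entry of $\mathcal{D}'$, whose $M$-coordinate does not match its shadow in $M$; so the claim that $\Phi_2(S)$ agrees with its shadow in all but at most one entry is false, and even after \lref{l:projLS} yields a Latin square you cannot conclude it is a subsquare of $M$ while disagreements remain. The paper first shows, by counting occurrences of each $A$-coordinate in multiples of $\alpha$, that every symbol of $S$ has the same $A$-coordinate, whence $S$ contains no entry of $\mathcal{D}'$; the one escape route (that such an entry is $\pi$ itself, whose $A$-coordinate has been shifted by $1$) is blocked precisely by Property~2, which you never invoke. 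The same omission affects your $A$-block case: you must rule out the possibility that the principal entry of $W$ has its mismatched coordinate repaired by lying in $\mathcal{D}'$ or by being $\pi$, and again Property~2 is what makes this impossible.
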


\begin{proof}
Let $S$ be a subsquare of $Q'$ of order $\alpha$.
Suppose that $S$ hits $W$ in
at least three columns. We will first show that $S$ must be contained within $W$. We will give the argument assuming that $W$ is an $M$-block. Analogous arguments can be used to prove the same claim for $A$-blocks. Let the position of $W$ be $(u, v)$. Suppose that there is a row $(r, r')$ of $S$ which does not hit $W$. Let $T$ be the $M$-block of $Q'$ in position $(r, v)$. Then $S$ hits $T$ in three distinct columns. \lref{l:blocksP} implies that $T$ must contain an alien entry with respect to $P$, hence $(r, v) \in [2] \times [3]$. It also implies that $S$ hits $T$ in at most two rows and exactly three columns. Hence $S$ can contain at most four rows which do not hit $W$. Since $\alpha \geq 8$ there are at least four rows of $S$ which hit $W$. However only three columns of $S$ hit $W$ and therefore there is a column $(c, c')$ of $S$ which does not hit $W$. Then $S$ hits the $M$-block of $Q'$ in position $(u, c)$ in at least four rows, contradicting \lref{l:blocksP}. This contradiction implies that all rows of $S$ must hit $W$. But then $S$ must be contained within $W$, as otherwise $S$ would hit an $M$-block of $Q'$ in at least eight rows, again contradicting \lref{l:blocksP}. 

Now suppose that $S$ hits $W$ in at least three rows, and is not contained within $W$. First we claim that $u \in [2]$. If not, then since at most two columns of $S$ hit $W$ it follows that $S$ has a column $(c, c')$ that does not hit $W$. The $M$-block of $Q'$ in position $(u, c)$ has no alien entries with respect to $P$ and $S$ hits this $M$-block in three rows, which contradicts \lref{l:blocksP}. So $u \in [2]$. If $S$ has a column $(c, c')$ with $c>3$ that does not hit $W$ then again $S$ must hit an $M$-block of $Q'$ that has no alien entries with respect to $P$ in at least three rows. Hence every column of $S$ either hits $W$ or is of the form $(c, c')$ with $c \in [3]$. \lref{l:blocksP}, combined with the fact that at least three rows hit $W$, implies that for each $i \in [3]$, $S$ has at most two columns of the form $(i, i')$. The only possibility is that $\alpha=8$, $v > 3$, exactly two columns of $S$ hit $W$ and for each $i \in [3]$, $S$ has exactly two columns of the form $(i, i')$. In that case \lref{l:blocksP} implies that $S$ has at most three rows of the form $(1,u')$ and at most three rows of the form $(2,u'')$. It follows that there is a row $(r, r')$ of $S$ with $r>2$. Let $T$ be the $M$-block of $Q'$ in position $(r, 1)$. We know that $S$ hits $T$ in two distinct columns and so \lref{l:blocksP} implies that $v=1$, which is false because $v>3$. This contradiction implies that $S$ must be contained within $W$.

We now show that $S$ cannot be contained within $W$. Suppose that $W$ is an $M$-block. Let $\Omega$ be the set of entries
$e$ in $S$ that satisfy (i)~$e=\pi$,
(ii)~$e$ is the principal entry of $W$ and/or (iii)~$e \in \mathcal{D}'$. 
Since $S$ is a Latin square it
follows that each symbol in $\Phi_1(S)$ occurs some multiple of $\alpha$
times. Also, all symbols in $W\setminus\Omega$ have the same
$A$-coordinate.  Since $\alpha>3$, it follows that every symbol in $W$ must
have the same $A$-coordinate. Now consider an entry $e$ that satisfies (iii).
If $e$ is an entry of $\mathcal{D}$ then its symbol has the wrong
$A$-coordinate to match with symbols in $W\setminus\Omega$.
The only possible fix would be if $e=\pi$,  but that is ruled out by 
Property 2. So no entry in $\Omega$ satisfies (iii).
Hence any entry in $\Omega$ has the same $M$-coordinate as the
corresponding entry in $P$.
If $W$ is not the principal $M$-block of $Q'$ then 
$\Phi_2(S)$ is an $\alpha \times \alpha$ Latin square that agrees with its
shadow in the matrix $M$. However $M$ is $N_\infty$ of order $\mu > \alpha$,
so this is impossible. We reach a similar contradiction
if $W$ is the principal $M$-block of $Q'$, because
then $\Phi_2(S)$ is an $\alpha \times \alpha$ Latin square that agrees with its
shadow in the matrix $M'$ defined by $M'[i, j] = M[i, j]+s$ for all
$\{i, j\} \subseteq [\mu]$.

We now consider the case when $W$ is an $A$-block of $Q'$.
Note that $S=W$ because they have the same order. Define $\Omega$ as
we did above. Then arguing similarly, we find that every symbol in $W$
has the same $M$-coordinate. Therefore the entry of $W$ that satisfies
(ii) must also satisfy (iii). Also, \lref{l:projLS} implies that $\Phi_1(S)$ is a Latin square, and from \lref{l:mintrade} it 
agrees with $B$ if $W$ is the principal $A$-block of $Q'$ and agrees
with $A$ otherwise. Hence the $A$-coordinate of the symbol in the principal
entry of $W$ must be equal to $A[1,1]$. This necessitates that the
entry satisfying (ii) and (iii) is $\pi$, but this contradicts
Property $2$.
\end{proof}

\begin{lem}\label{l:q'2}
Any subsquare of $Q'$ of order $\alpha$ hits every block of $Q'$ at most
once.
\end{lem}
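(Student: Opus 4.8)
<br>

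The plan is to prove \lref{l:q'2} by leveraging the structural analysis already developed in \lref{l:blocksP} and \lref{l:not3W}. Let $S$ be a subsquare of $Q'$ of order $\alpha$, and suppose for contradiction that $S$ hits some block $T$ of $Q'$ in at least two entries. Recall that by \lref{l:qninf}, any proper subsquare of $Q'$ must contain the alien entry $\pi$. The first thing to establish is the location of $\pi$: either $\pi \in T$, or $\pi$ lies in a different block. I would split into these two cases.

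First I would handle the case where $\pi \not\in T$. Here $Q'$ is a $\ell$-near copy of $P$ with $\ell \le 7$ (since $Q$ is a $6$-near copy of $P$ and the perturbation producing $Q'$ adds one more alien entry), and $\mathcal{D}'\cup\{\pi\}$ contains all the alien entries of $Q'$ with respect to $P$. So \lref{l:blocksP} applies directly. If $T$ contains no alien entry with respect to $P$, then parts $(i)$ or $(ii)$ of \lref{l:blocksP} force $S$ to hit $T$ in exactly two entries, arranged in a specific $2\times2$-minus-a-corner pattern (one row and two columns, or two rows and one column). I would then propagate this constraint: the forced position of $\pi$ and the forced symbol value $(A[u,v],M[1,1])$ (or its $A$-block analogue) let me locate a second block that $S$ must also hit multiply, and by chasing the symbol constraints through a neighbouring block—much as in the proof of \lref{l:q2}—I derive a contradiction, typically via Property $4$ (that $B[i',x]\neq A[i,x]$ off the principal cell) or by forcing $S$ to hit some block in too many rows, contradicting \lref{l:blocksP}$(iii)$ or \lref{l:nolargesubsq}. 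If instead $T$ does contain an alien entry with respect to $P$, then part $(iii)$ of \lref{l:blocksP} gives $|R|,|C|\le3$ with $\min(|R|,|C|)<3$, so $S$ hits $T$ in at most two rows or at most two columns; combining this with the order $\alpha$ of $S$ and repeating the block-counting argument should again overdetermine the rows or columns hitting some alien-free block.

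The second case is where $\pi \in T$. Now \lref{l:not3W} applies directly: it tells me that $S$ hits $T$ in at most two rows \emph{and} at most two columns. So $|S\cap T|\le4$, and the remaining task is to rule out $|S\cap T|\in\{2,3,4\}$ down to $|S\cap T|\le1$. I would argue that if $S$ hits $T$ in two rows and two columns (or two of one and one of the other, still with $|S\cap T|\ge2$), then the native symbols of $T$ appearing in $S$ force $S$ to hit a neighbouring alien-free block in two or more rows or columns, re-invoking \lref{l:blocksP} on that block. The key leverage is that the entries of $S$ inside $T$ carry symbols whose $A$- or $M$-coordinates are almost all equal (all but the few alien/principal entries in $\Omega$, as in \lref{l:not3W}), so the projection $\Phi_1(S)$ or $\Phi_2(S)$ is severely constrained, and these repeated copies must land in a block that \lref{l:blocksP} forbids being hit so often.

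The main obstacle I anticipate is the case $\pi\in T$ with $S$ meeting $T$ in a genuine $2\times2$ configuration, since here neither \lref{l:blocksP} nor the simple counting immediately closes the gap—I must track exactly which symbols of $S\cap T$ are native versus alien and follow each through the correct adjacent block, making careful use of the fact that $1\notin\{r_1,r_2,c_1,c_2,c_3\}$ (so alien entries avoid the principal row and column) and of Property $4$. A secondary subtlety is bookkeeping the distinction between the principal $M$-block $\beta_M$ and ordinary blocks, since $\beta_M$ is a genuine subsquare and must be handled via the shifted matrix $M'[i,j]=M[i,j]+s$ rather than $M$ itself, exactly as in \lref{l:not3W}. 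Once every configuration with $|S\cap T|\ge2$ yields a contradiction, the lemma follows.
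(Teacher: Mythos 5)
Your overall strategy --- reduce everything to \lref{l:blocksP} and \lref{l:not3W} and then chase forced configurations into neighbouring blocks --- is in the right spirit, but the mechanism that actually closes the argument is missing, and you have flagged the resulting gap yourself. The paper does not case-split on whether $\pi$ lies in the doubly-hit block. It first observes that $S$ hits no block in three rows, so among the $\alpha\ge8$ rows of $S$ there are at least two rows $(x_1,x_1')$ and $(x_2,x_2')$ with $2<x_1\le x_2$ that avoid the block $W$ containing $\pi$; these rows meet only blocks free of alien entries with respect to $P$. Now if \emph{any} $M$-block (possibly $W$ itself) is met by $S$ in two columns $(c,c')$ and $(c,c'')$ with $\pi\notin(c,c')$, then applying \lref{l:blocksP}$(i)$ to the alien-free blocks in positions $(x_1,c)$ and $(x_2,c)$ forces $x_1<x_2$ and, in each admissible case, forces $\pi$ to lie in one of the two columns with its symbol $z$ equal to the native entry of $Q$ in the \emph{other} column of that row. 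Doing this for both rows places $z$ twice in column $(c,c')$ of $Q$, contradicting that $Q$ is a Latin square; a transposed count then rules out two rows meeting one block. This single observation --- that \lref{l:blocksP}$(i)$ pins the symbol of $\pi$ to a concrete native cell of $Q$, so that two independent clean rows collide --- is the whole proof, and it does not appear in your outline.

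What you propose in its place would not go through as stated. Your plan to imitate the proof of \lref{l:q2} via Property $4$ relies on the alien entry having a symbol whose $A$-coordinate equals $A[i,x]$ for some $i\in[2]$; that is what lets that proof conclude $u\in[2]$ and iterate. Here $\pi$ has symbol $Q[\gamma,\delta]+(1,0)$ for an \emph{arbitrary} cell $(\gamma,\delta)$, so no such coordinate constraint is available and the chase never starts. And your Case 2 ($\pi\in T$ with a genuine $2\times2$ intersection), which you concede you cannot close, is exactly where a direct block-by-block analysis stalls; the paper sidesteps it because its contradiction is extracted from blocks lying in rows and columns of $S$ that avoid $W$ altogether. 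So the proposal assembles the correct ingredients but lacks the decisive step, and the substitute arguments offered would fail.
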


\begin{proof}
We give the argument for $M$-blocks; the argument for $A$-blocks is
similar.  Let $W$ be the $M$-block containing $\pi = ((x_1,x_2),
(y_1,y_2), z) \in ([\alpha] \times [\mu])^3$.  By \lref{l:blocksP} and
\lref{l:not3W} we know that $S$ does not hit any block in three
rows. Hence there are at least two rows $(x_1,x'_1)$ and
$(x_2,x'_2)$ of $S$ that do not hit $W$ and satisfy $2<x_1\le x_2$.

Suppose that $S$ hits some $M$-block in two columns, say $(c, c')$
and $(c, c'')$, where $\pi$ is not in $(c,c')$.
Now \lref{l:blocksP} implies that $x_1<x_2$ and 
$Q[(x_1,x_1'),(c,c')]=z=Q[(x_2,x_2'),(c,c')]$, which violates the fact that
$Q$ is a Latin square.

We thus know that $S$ contains two columns $(y_1,y'_1)$ and
$(y_2,y'_2)$ that do not hit $W$ and satisfy $3<y_1<y_2$.
Employing a similar argument, we can deduce that no $M$-block
can be hit by two rows of $S$.
\end{proof}

\begin{lem}\label{l:Qmshift}
The integer $\mu$ is a strong allowable shift with respect to $(A, Q_1)$.
\end{lem}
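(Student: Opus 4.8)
The reduction established just before \lref{l:blocksP} already tells us that it suffices to prove, for every cell $(\gamma,\delta)$ of $Q$, that the near copy $Q'=(Q[\gamma,\delta]+(1,0))\inc Q[\gamma,\delta]$ contains no subsquare of order $\alpha$, where $\pi$ denotes the alien entry of $Q'$ with respect to $Q$. So the plan is to fix such a cell, assume for contradiction that $S$ is a subsquare of $Q'$ of order $\alpha$, and derive a contradiction. Since $Q$ is $N_\infty$ by \lref{l:qninf}, any proper subsquare of $Q'$ must contain $\pi$; in particular $\pi\in S$.

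The first main step is to control how $S$ sits inside the block structure. By \lref{l:q'2}, $S$ meets every block at most once. As $S$ has $\alpha^2$ entries and there are exactly $\alpha^2$ $M$-blocks, $S$ meets each $M$-block exactly once. From this I would deduce that $\Phi_1$ is a bijection from the rows of $S$ onto $[\alpha]$ and from the columns of $S$ onto $[\alpha]$ (two rows sharing a first coordinate would hit a common $M$-block twice), and that $\Phi_2$ is injective on the rows and columns of $S$ (applying the same argument to the $A$-blocks). Consequently $\Phi_1(S)$ is an $\alpha\times\alpha$ matrix whose shadow in $A$ is the whole of $A$, agreeing with $A$ outside a controlled set of exceptional entries.

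The second, and most delicate, step is to bound the number $\ell$ of these exceptional entries. They can only arise from the at most one entry of $S$ lying in the principal $A$-block (where the symbol is governed by $B$ rather than $A$, by \lref{l:q'2}), from the alien entry $\pi$, and from entries of $S$ lying in cells of $\mathcal{D}$. To bound the contribution of $\mathcal{D}'$ I would apply \lref{l:near3cyc} (when $\pi$ does not occupy a cell of $\mathcal{D}$) or \lref{l:near3cycalt} (when it does), taking the appropriate entry of the $3$-cycle as the distinguished entry; these lemmas show that $S$ either contains all of $\mathcal{D}$ or meets $\mathcal{D}$ in at most two entries, all lying in one row. The case $S\supseteq\mathcal{D}$ must be excluded separately: containing the whole row cycle forces $\Phi_1(S)$ to reflect the switch of the $(d_1,d_2,d_3)$ row cycle of $A$ (Property $2$), so $\Phi_1(S)$ would have to differ from the resulting Latin square only at the cell of $\pi$, which is impossible since two distinct Latin squares of the same order differ in at least four cells by \lref{l:mintrade}.

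Finally, once $\ell$ is bounded and seen to satisfy $\ell<\alpha$, \lref{l:projLS} shows that $\Phi_1(S)$ is a Latin square of order $\alpha$. Since it agrees with the order-$\alpha$ Latin square $A$ outside a small set of cells, \lref{l:mintrade} forces either $\Phi_1(S)=A$ or a trade of size exactly four that is an intercalate meeting the $\mathcal{D}$-row, the principal $A$-block and $\pi$. The equality $\Phi_1(S)=A$ is contradicted by the genuine disagreement at $\pi$, and the intercalate case is ruled out using the sub-conditions of Conditions $(ii)$ and $(iii)$ that forbid the relevant intercalates together with Properties $5$ and $7$. Either way we reach a contradiction, so $Q'$ has no subsquare of order $\alpha$, and $\mu$ is a strong allowable shift with respect to $(A,Q_1)$. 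The hardest part is the exceptional-entry count of the third step: keeping the combined contribution of $\pi$, the principal $A$-block and $\mathcal{D}'$ small enough, and controlled enough in position, that the residual trade cannot survive \lref{l:mintrade}.
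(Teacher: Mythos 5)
Your reduction, your use of \lref{l:q'2} to get injectivity of the projections, and your use of Lemmas \ref{l:near3cyc} and \ref{l:near3cycalt} to force $|S\cap\mathcal{D}'|\in\{0,1,2,6\}$ all match the paper. The gap is that you run the entire endgame through the projection $\Phi_1$ onto $A$, and that projection is not enough. The paper splits on $u=|S\cap\mathcal{D}'|$: for $u\in\{1,2\}$ it argues as you do on the $A$-side (an entry of $S\cap\mathcal{D}'$ has the wrong $A$-coordinate to sit inside $\Phi_1(S)=A$, by Property~$2$), but for $u\in\{0,6\}$ it passes to $S'=P'[R,C]$ (un-switching $\mathcal{D}$) and projects via $\Phi_2$ onto $M$, obtaining a Latin square of order $\alpha$ that is either a proper subsquare of the $N_\infty$ square $M$ or a subsquare of $(M[i,j]+s)\inc M[i,j]$, contradicting Condition~$(i)$ for $(M,s)$. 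Your proof never invokes the hypothesis that $s$ is a strong allowable shift with respect to $(A,M)$, nor that $M$ is $N_\infty$ of order $\mu>\alpha$; since the lemma is precisely the inductive propagation of Condition~$(i)$, that omission is a red flag.

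Concretely, your argument fails in the case $u=0$ with $\pi$ lying in the principal $A$-block. There the displaced symbol has $A$-coordinate $B[i,k]$ and the alien symbol has $A$-coordinate $B[i,k]+1$, and nothing forbids $B[i,k]+1=A[i,k]$; indeed $B_8[2,1]+1=8=A_8[2,1]$, so for $(\gamma,\delta)=((2,1),(1,1))$ the entry $\pi$ \emph{agrees} with its shadow in $A$ under $\Phi_1$. Then $\Phi_1(S)$ can equal $A$ with no disagreement anywhere, and your claimed contradiction (``the genuine disagreement at $\pi$'') evaporates; the only way to kill such an $S$ is the $M$-side projection. A secondary, lesser issue: in the $u=6$ subcase your count of exceptional entries against $A$ itself can reach $\alpha=8$ (six cells of $\mathcal{D}$, plus $\pi$, plus a principal-$A$-block entry), so \lref{l:projLS} does not apply directly; you must compare against the switched square $A'$ rather than $A$ to keep $\ell<\alpha$, and then rule out agreement at $\pi$ and at the principal $A$-block cell using Properties~$2$ and~$4$. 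That repair is possible, but the $u=0$ gap above is not repairable within a $\Phi_1$-only argument.
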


\begin{proof}
It suffices to show that $Q'$ has no subsquare of order $\alpha$.
Suppose that $S$ is a subsquare of $Q'$ of order $\alpha$ that
contains $u$ elements from $\mathcal{D}'$. By \lref{l:near3cyc} and
\lref{l:near3cycalt} we know that $u\in \{0,1,2,6\}$ and that if
$u=6$ then $\pi\notin\mathcal{D}'$.  Let $R$ be the set of rows of
$S$ and let $C$ be the set of columns of $S$.  By \lref{l:q'2}, we
know that $S$ hits every block of $Q'$ at most once and so $\Phi_1$
and $\Phi_2$ are injective on $R$ and $C$.

First suppose that $u \in \{0,6\}$.  Let $P' = (P[\gamma,
\delta]+(1,0)) \inc P[\gamma, \delta]$ and let $S' = P'[R,
C]$. Then $S'$ is a subsquare of $P'$ that hits every block of
$P'$ at most once.  Since the $M$-coordinate of the symbol of $\pi$
is equal to $\Phi_2(P[\gamma,\delta])$, it follows that $\Phi_2(S')$
is a matrix of order $\alpha$ that agrees with its shadow in $M$ in
all but at most one entry. \lref{l:projLS} implies that $\Phi_2(S')$
is a Latin square. If $S'$ contains an entry from the principal
$M$-block of $P'$ then $\Phi_2(S')$ is a subsquare of the matrix
$(M[i, j]+s) \inc M[i, j]$ for some $\{i, j\} \subseteq [\mu]$,
which contradicts the fact that $s$ is a strong allowable shift with
respect to $(A, M)$. If $S'$ does not hit the principal $M$-block of
$P'$ then $\Phi_2(S')$ is a proper subsquare of $M$, which is a
contradiction because $M$ is $N_\infty$.

It remains to consider the case when $u \in \{1,2\}$.  Define
$\Omega$ to be the set of entries $e$ in $S$ that satisfy (i) $e=\pi$,
(ii) $e$ is the principal entry of some $M$-block of $Q'$ and/or (iii)
$e \in\mathcal{D}'$. By \lref{l:q'2} and $u\le2$, we know that
$|\Omega| \leq 4$. Since $\Phi_1(S)$ is a matrix of order $\alpha$
that agrees with its shadow in $A$ except possibly on the entries coming from
$\Omega$, \lref{l:projLS} implies that $\Phi_1(S)$ is a Latin square.
Furthermore, by \lref{l:mintrade} and the fact that $A$ is $N_\infty$
it follows that $\Phi_1(S) = A$. Since $u\ge1$, there exists an entry
$e$ of $S$ that satisfies (iii). Note that $e$ cannot satisfy (ii),
because $1\notin\{r_1,r_2\}$.  If $e\in \mathcal{D}$ then $\Phi_1(e)$
does not match its shadow in $A$.  So we must have $e=\pi$, but then
$\Phi_1(e)$ still does not match its shadow in $A$ by Property $2$, a
contradiction.
\end{proof}

\lref{l:Qmshift} takes care of Condition $(i)$ in our recursive step.
Next we deal with the other two required conditions, and thereby complete
the proof of \tref{t:X}.

\begin{lem}\label{l:Qii}
The pair $(Q_1,\mu)$ satisfies Condition $(ii)$ and Condition $(iii)$.
\end{lem}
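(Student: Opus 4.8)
The plan is to realise both required structures inside a part of $Q$ that is completely unperturbed, and then to reduce the two intercalate-freeness clauses to the corresponding clauses for $M$. Write $x_1,x_2$, $y_1,y_2,y_3$, $z_1,z_2,z_3$ for the rows, columns and symbols of the length-$3$ row cycle furnished by Condition $(ii)$ for $(M,s)$, so that $\tau^M_{x_1,x_2}$ contains the $3$-cycle $(z_1,z_2,z_3)$, that $1\notin\{x_1,x_2,y_1,y_2,y_3\}$, that $M[1,1]+s\notin\{z_1,z_2,z_3\}$, and that $M[x_2,y_3]\inc M[x_1,y_3]$ has no intercalate. The observation driving everything is that any row $(a,x)$ of $Q$ with $a\ge3$ and $x\ne1$ lies wholly outside the principal $M$-block, the principal $A$-block and $\mathcal{D}$, hence agrees with $A\times M$; consequently a row permutation between two such rows $(a,x)$ and $(b,x')$ acts on symbols by $(e,f)\mapsto(\tau^A_{a,b}(e),\tau^M_{x,x'}(f))$, where $\tau^A_{a,b}$ and $\tau^M_{x,x'}$ are the row permutations of $A$ and $M$. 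I would use the single $M$-cycle above to supply both conditions.

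For Condition $(ii)$ I would lift this $M$-cycle to the (unperturbed) $M$-block of $Q$ in $A$-position $(a,c)$ for any fixed $a\ge3$; writing $e=A[a,c]$, the permutation $\tau_{(a,x_1),(a,x_2)}$ acts as $(e',f)\mapsto(e',\tau^M_{x_1,x_2}(f))$ and so contains the genuine $3$-cycle on $(e,z_1),(e,z_2),(e,z_3)$, which becomes a length-$3$ row cycle of $Q_1$ after relabelling by $\varphi$. The clause $1\notin\{\cdots\}$ holds because $x_i,y_i\ne1$, and the symbol clause holds because $Q_1[1,1]+\mu=\varphi(d_1+1,\,M[1,1]+s)$ has $M$-coordinate $M[1,1]+s\notin\{z_1,z_2,z_3\}$ while every cycle symbol has $M$-coordinate in $\{z_1,z_2,z_3\}$. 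For the intercalate clause, note that in the altered entry only the $M$-coordinate changes, from $z_3$ to $\tau^M_{x_1,x_2}(z_3)=z_1$; since $Q_1$ is $N_\infty$ by \lref{l:qninf}, any intercalate meets the altered cell, so applying $\Phi_2$ and \lref{l:projLS} yields an intercalate of $M[x_2,y_3]\inc M[x_1,y_3]$, contradicting Condition $(ii)$ for $M$.

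For Condition $(iii)$ I would combine the same $M$-cycle with Property $3$. Let $i<j$ in $\{3,\dots,\alpha\}$ and the symbol $k$ be as in Property $3$, so $(\tau^A_{i,j})^3(k)=k+1$, and set $\tau'=\tau_{(i,x_1),(j,x_2)}$, which acts by $(e,f)\mapsto(\tau^A_{i,j}(e),\tau^M_{x_1,x_2}(f))$. Taking $\sigma'=\varphi(k,z_1)$ gives $(\tau')^3(\sigma')=\varphi(k+1,z_1)=\sigma'+\mu$, precisely the shift demanded by Condition $(iii)$ for $(Q_1,\mu)$, and the six entries of \eref{e:proto3cyc} occupy rows of $A$-coordinate $\ge3$ and columns whose $A$-coordinate is $\ne1$ (this is exactly the clause of Property $3$ that $k,\tau^A_{i,j}(k),(\tau^A_{i,j})^2(k)$ avoid cell $(i,1)$) and whose $M$-coordinate lies in $\{y_1,y_2,y_3\}$; hence they are unperturbed and carry the predicted symbols. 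The clauses $1\notin\{r_1',\dots,c_3'\}$ and $Q_1[1,1],Q_1[1,1]+\mu\notin\{\sigma',\tau'(\sigma'),(\tau')^2(\sigma'),\sigma'+\mu\}$ then follow from $i,j\ge3$, $x_i,y_i\ne1$, and $M[1,1]+s\notin\{z_1,z_2,z_3\}$ as before.

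The \emph{main obstacle} is the remaining intercalate clause, that $Q_1[r_2',c_3']\inc Q_1[r_1',c_3']$ has no intercalate. Here, unlike the Condition $(ii)$ case, both coordinates of the altered symbol change ($(\tau^A_{i,j})^2(k)\to k+1$ and $z_3\to z_1$), so no single projection converts the alteration into a pure near-copy of $A$ or of $M$. My plan is to observe that any intercalate must meet the altered cell, and then split on whether $\Phi_2$ is injective on its two rows and two columns. When $\Phi_2$ is non-injective the two equated symbols are forced to share an $M$-coordinate, which is impossible because $z_1\ne z_3$; when $\Phi_2$ is injective, projecting onto $M$ produces an intercalate of $M[x_2,y_3]\inc M[x_1,y_3]$, contradicting Condition $(ii)$ for $M$. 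The delicate point is that the opposite corner of the intercalate carries the symbol $\varphi(k+1,z_1)$, whose $A$-coordinate $k+1$ is kept away from $\{d_1,d_2,d_3\}$ by Property $3$; this is what forces that corner out of $\mathcal{D}$ and the principal $M$-block, so that the three native corners are unperturbed and the projection is valid. Carrying out this case analysis, together with the analogue for $A$-blocks and a careful confirmation that the chosen block avoids every perturbation of $P$ and every entry of $\mathcal{D}$ (invoking Properties $5$--$7$ to exclude the residual configurations that meet the $B$-block), is where essentially all of the work lies.
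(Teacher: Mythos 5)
Your Condition $(iii)$ argument is essentially the paper's: the same combination of $M$'s Condition $(ii)$ cycle with Property $3$ in rows $(i,x_1),(j,x_2)$, and you correctly identify the decisive point, namely that $k+1\notin\{d_1,d_2,d_3\}$ forces the fourth corner of a hypothetical intercalate out of $\mathcal{D}$ and the principal $M$-block. (Your worry about the principal $A$-block is unnecessary there: the $B$-perturbation changes only $A$-coordinates, so it is invisible to $\Phi_2$ and does not threaten the projection onto $M$.) For Condition $(ii)$, however, you take a genuinely different route from the paper. The paper simply reuses $\mathcal{D}$ itself, i.e.\ rows $(1,r_1),(2,r_2)$ and columns $(1,c_1),(2,c_2),(3,c_3)$, and the no-intercalate clause is then almost free: switching back on $\mathcal{D}$ returns $P$, whose only proper subsquare is $\beta_M$ by \tref{t:1sub}, so $Q[(2,r_2),(3,c_3)]\inc Q[(1,r_1),(3,c_3)]$ cannot contain an intercalate. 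Your alternative --- lifting $M$'s Condition $(ii)$ cycle into an unperturbed $M$-block --- is a workable idea, but it is exactly in the intercalate clause that your version has a gap.

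The gap: you place the host $M$-block at a position $(a,c)$ with $a\ge3$ and $c$ unconstrained, and then assert that ``applying $\Phi_2$ and \lref{l:projLS}'' converts an intercalate through the altered cell into an intercalate of $M[x_2,y_3]\inc M[x_1,y_3]$. That projection is only valid if every corner carries a symbol whose $M$-coordinate agrees with its shadow in $M$ (apart from the deliberate alteration). Two corners are safe because row $(a,x_1)$ is unperturbed, but the column $(c,y_3)$ can meet $\mathcal{D}$ or the principal $M$-block unless $c\ge4$, and, more seriously, the opposite corner must carry the symbol $(A[a,c],M[x_2,y_3])$ and nothing in your argument stops that corner from lying in $\mathcal{D}$ or in the principal $M$-block, where $M$-coordinates are switched between rows or shifted by $s$ and the projection breaks down. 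You need the analogue of the step you used for Condition $(iii)$: choose $c\ge4$ with $A[a,c]\notin\{d_1,d_2,d_3\}$ (possible since $\alpha-3\ge5$), which forces the opposite corner out of every perturbed cell; the degenerate cases where $\Phi_2$ identifies the two rows or the two columns then contradict $M$ being a Latin square rather than Condition $(ii)$. As written, without these choices and this case analysis, the Condition $(ii)$ half of your proof is incomplete.
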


\begin{proof}
We know that $Q$ contains the row cycle $\mathcal{D} = \rho((1,r_1), (2,r_2), (3,c_3))$ and $r_1 \neq 1$. Also, the columns involved in $\mathcal{D}$ are $(1,c_1)$, $(2,c_2)$ and $(3,c_3)$ with $c_1 \neq 1$. Switching $Q$ on $\mathcal{D}$ yields $P$ which does not contain an intercalate by \tref{t:1sub}. Hence the matrix $Q[(2,r_2), (3,c_3)] \inc Q[(1,r_1), (3,c_3)]$ does not contain an intercalate either. Also, since $M$ satisfies Condition $(iii)$,
$$\Phi_2\big(Q[(1,1), (1,1)]+(1,0)\big)=M[1,1]+s\not\in\big\{\sigma+s, \tau(\sigma), \tau^2(\sigma)\big\}.$$
So $Q[(1,1), (1,1)]+(1,0)$ is not in $\{(d_1,\sigma+s), (d_2,\tau(\sigma)), (d_3,\tau^2(\sigma))\}$, which is the set of symbols of $\mathcal{D}$. 
It follows that $(Q_1,\mu)$ satisfies Condition $(ii)$ with row cycle $\rho(\varphi(1,r_1), \varphi(2,r_2), \varphi(3,c_3))$

Since $(M, s)\in\X$ we know that $M$ satisfies Condition $(ii)$ with some row cycle involving rows $x_1$, $x_2$, columns $y_1$, $y_2$, $y_3$ and symbols $z_1,z_2,z_3$. Without loss of generality $z_f = M[x_1,y_f]$ for each $f \in [3]$.
Also, we know that $A$ satisfies Property $3$ with some rows $\{i, j\}$ with $3 \leq i < j$, columns $\{\ell_1,\ell_2,\ell_3\}$ and symbol $k$ such that $\tau_{i, j}^f(k) = A[j, \ell_f]$ for each $f \in [3]$. It follows that $Q$ contains the entries,
\begin{equation}\label{e:newnear3cyc}
	\begin{aligned}
		&\big((i, x_1), (\ell_1,y_1), (k, z_1)\big),
		&\big((i, x_1), (\ell_2,y_2), (\tau_{i, j}(k), z_2)\big), 
		&\quad\big((i, x_1), (\ell_3,y_3), (\tau_{i, j}^2(k), z_3)\big),\\
		&\big((j, x_2), (\ell_1,y_1), (\tau_{i, j}(k), z_2)\big), 
		&\big((j, x_2), (\ell_2,y_2), (\tau_{i, j}^2(k), z_3)\big),
		&\quad\big((j, x_2), (\ell_3,y_3), (k+1,z_1)\big).
	\end{aligned}
\end{equation}
Next we show that the matrix $Q[(j, x_2), (\ell_3,y_3)] \inc Q[(i, x_1), (\ell_3,y_3)]$ contains no intercalates. Suppose that $I$ is an intercalate of this matrix with rows $\{(i, x_1), (r, r')\}$ and columns $\{(\ell_3,y_3), (c, c')\}$ for some $\{r, c\} \subseteq [\alpha]$ and $\{r', c'\} \subseteq [\mu]$. Property $3$ tells us that $k+1 \notin \{d_1,d_2,d_3\}$ which implies that $(r, c) \neq (1,1)$ and $I$ contains no entry from $\mathcal{D}$. It follows that $M[x_2,y_3] = M[r', c']$ and $M[x_1,c'] = M[r', y_3]$. Since $x_1 \neq x_2$ this is a contradiction of Condition $(ii)$. By Condition $(ii)$ we also know that $M[1,1]+s \not\in \{z_1,z_2,z_3\}$. Thus neither $Q[(1,1), (1,1)]$ nor $Q[(1,1), (1,1)]+(1,0)$ are elements of $\{(k, z_1), (\tau_{i, j}(k), z_2), (\tau_{i, j}^2(k), z_3), (k+1,z_1)\}$.
Therefore $(Q_1,\mu)$ satisfies Condition $(iii)$ with rows $\varphi(i, x_1)$, $\varphi(j, x_2)$, columns $\varphi(\ell_1,y_1)$, $\varphi(\ell_2,y_2)$, $\varphi(\ell_3,y_3)$ and symbol $\varphi(k, z_1)$, as illustrated in \eref{e:newnear3cyc}. Note that $1\notin\{x_1,x_2,y_1,y_2,y_3\}$ ensures that
\[1\notin\{\varphi(i, x_1),\,\varphi(j, x_2),\,\varphi(\ell_1,y_1),
\,\varphi(\ell_2,y_2),\,\varphi(\ell_3,y_3)\}.\qedhere
\]
\end{proof}

We are now ready to prove our main result for this section.

\begin{proof}[Proof of \tref{t:ninf}]
By prior results it suffices to show that there exists an $N_\infty$ Latin square of order $n$ for all $n \geq 12$ of the form $2^x3^y$, where $x \geq 1$ and $y \geq 0$. Let $n$ be such an integer. Write $n = 2^{3i+j}3^{2k+\ell}$ for some $\{i, j, k, \ell\} \subseteq \mathbb{Z}$ with $j \in \{0,1,2\}$ and $\ell \in \{0,1\}$. Consider the following table. 

\def\tspacer{{\vrule height 4.5ex width 0ex depth0ex}}	
\begin{center}
	\begin{tabular}{|c|c|c|}
		\hline
		& $\ell=0$ & $\ell=1$ \\
		\hline
		\tspacer
		$j=0$ & \begin{tabular}{cc}
			$i \geq 2$ & $n=8^{i-2}9^k64$ \\
			$i=1$ and $k \geq 1$ & $n=8^{i-1}9^{k-1}72$ 
		\end{tabular}
		& $n=8^{i-1}9^k24$ \\
		\hline
		\tspacer                        
		$j=1$ & \begin{tabular}{cc}
			$i \geq 1$ & $n=8^{i-1}9^k16$ \\
			$i=0$ and $k \geq 1$ & $n=9^{k-1}18$ 
		\end{tabular}&
		\begin{tabular}{cc}
			$i \geq 1$ & $n=8^{i-1}9^k48$ \\
			$i=0$ and $k \geq 1$ & $n=9^{k-1}54$ 
		\end{tabular} \\
		\hline
		\tspacer                        
		$j=2$ & \begin{tabular}{cc}
			$i \geq 1$ & $n=8^{i-1}9^k32$ \\
			$i=0$ and $k \geq 1$ & $n=9^{k-1}36$ 
		\end{tabular} & $n=8^i9^k12$ \\
		\hline
	\end{tabular}
\end{center}
This table, together with the base cases in \eref{e:base}, tells us that we can always write $n$ in the form $8^i9^kn'$ for some $n' \in N(\X)$. We can then repeatedly apply \tref{t:X} to show that $n \in N(\X)$. 
\end{proof}

\section{Latin hypercubes without subhypercubes}\label{s:main2}

In this section we prove \tref{t:ninfhyper}. Let $H$ be a $d$-dimensional Latin hypercube of order $n$. Let $d' \geq d$ be an integer. We can define an array $\mathcal{H}_{d'}(H) : [n]^{d'} \to [n]$ by,
\[
\mathcal{H}_{d'}(H)[x_1,x_2,\ldots, x_{d'}] \equiv H[x_1,x_2,\ldots, x_d] +\sum_{i=d+1}^{d'} x_i \bmod n.
\]
The following lemma is easy to verify.

\begin{lem}
Let $H : [n]^d \to [n]$ be a $d$-dimensional Latin hypercube of order $n$ and let $d' \geq d$ be an integer. Then $\mathcal{H}_{d'}(H)$ is a Latin hypercube.
\end{lem}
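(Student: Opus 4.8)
The plan is to verify directly that $\mathcal{H}_{d'}(H)$ satisfies the defining property of a Latin hypercube: every axis-parallel line contains each of the $n$ symbols exactly once. Fix a base point $c = (c_1,\ldots,c_{d'}) \in [n]^{d'}$ and an axis direction $k \in [d']$; I must show that the set
\[
\{\mathcal{H}_{d'}(H)[c_1,\ldots,c_{k-1},x,c_{k+1},\ldots,c_{d'}] : x \in [n]\}
\]
equals all of $[n]$. Since this is a set of $n$ values drawn from $[n]$, it suffices to show the map $x \mapsto \mathcal{H}_{d'}(H)[\ldots,x,\ldots]$ is injective, equivalently a bijection on $[n]$.

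The argument splits according to whether the varying axis $k$ is one of the original $d$ axes or one of the appended axes. First I would treat the case $k > d$. Here the value is
\[
H[c_1,\ldots,c_d] + \sum_{i=d+1}^{d'} c_i' \bmod n,
\]
where $c_i' = c_i$ for $i \neq k$ and $c_k' = x$. The summand $H[c_1,\ldots,c_d]$ is a fixed element of $[n]$ independent of $x$, and as $x$ ranges over $[n]$ the expression is simply that constant plus $x$ plus a fixed offset, all modulo $n$. Since translation by a constant is a bijection on $\mathbb{Z}/n\mathbb{Z}$, the line contains each symbol exactly once. Second, for $k \le d$, I would fix the appended coordinates, so $\sum_{i=d+1}^{d'} c_i \bmod n$ is a constant offset, and the value becomes $H[c_1,\ldots,c_{k-1},x,c_{k+1},\ldots,c_d]$ plus that constant. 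As $x$ ranges over $[n]$, the fact that $H$ is itself a Latin hypercube guarantees that $\{H[c_1,\ldots,x,\ldots,c_d] : x \in [n]\} = [n]$; adding a fixed constant modulo $n$ permutes $[n]$, so again every symbol appears exactly once.

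There is really no hard part here, which is consistent with the excerpt's remark that the lemma is easy to verify. The only thing requiring minor care is bookkeeping: confirming that the reduction modulo $n$ is applied consistently (so that the output lands in $[n]$, identified with $\mathbb{Z}/n\mathbb{Z}$), and that in each of the two cases the dependence on $x$ is affine with invertible linear part, namely a translation. I would present both cases crisply, invoking in the first case the elementary fact that $y \mapsto y + a \bmod n$ is a bijection, and in the second case the Latin hypercube property of $H$ together with the same translation fact. This completes the verification that $\mathcal{H}_{d'}(H)$ is a Latin hypercube.
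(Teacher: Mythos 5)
Your verification is correct and is exactly the routine case analysis the paper has in mind when it omits the proof with the remark that the lemma ``is easy to verify'': for an appended axis the line is a translate of the identity permutation, and for an original axis it is a translate of a line of $H$. Nothing further is needed.
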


Moreover, boosting the dimension in this way preserves the $N_\infty$ property:

\begin{lem}\label{l:hninf}
Let $H : [n]^d \to [n]$ be a $d$-dimensional $N_\infty$ Latin hypercube of order $n$ and let $d' \geq d$ be an integer. The Latin hypercube $\mathcal{H}_{d'}(H)$ is $N_\infty$.
\end{lem}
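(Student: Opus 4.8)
The plan is to show that any proper subhypercube of $\mathcal{H}_{d'}(H)$ would project down to a proper subhypercube of the original $d$-dimensional hypercube $H$, contradicting that $H$ is $N_\infty$. Write $G = \mathcal{H}_{d'}(H)$, so that
\[
G[x_1,\ldots,x_{d'}] \equiv H[x_1,\ldots,x_d] + \sum_{i=d+1}^{d'} x_i \pmod n.
\]
Suppose, for contradiction, that $G$ has a proper subhypercube $G' = G|_{S_1 \times \cdots \times S_{d'}}$, where each $S_i \subseteq [n]$ has cardinality $k$ for some $k \in \{2,\ldots,n-1\}$, and $G'$ uses exactly $k$ symbols. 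The key structural observation is that the ``extra'' coordinates $x_{d+1},\ldots,x_{d'}$ enter $G$ only through the additive term $\sum_{i>d} x_i$, so fixing those coordinates merely shifts the symbol set of the first $d$ coordinates by a constant.

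First I would fix a value $c_i \in S_i$ for each $i \in \{d+1,\ldots,d'\}$ and set $t = \sum_{i=d+1}^{d'} c_i \bmod n$. Consider the slice $H' = H|_{S_1 \times \cdots \times S_d}$, a $d$-dimensional subarray of $H$ whose index sets each have size $k$. By the definition of $G$, the symbols appearing in the subarray $G|_{S_1 \times \cdots \times S_d \times \{c_{d+1}\} \times \cdots \times \{c_{d'}\}}$ are exactly $\{H[x_1,\ldots,x_d] + t : x_j \in S_j\}$, i.e.\ the symbol set of $H'$ shifted by the constant $t$. Since this slice is contained in $G'$, which uses only $k$ symbols in total, the subarray $H'$ must contain at most $k$ distinct symbols; and since $H'$ is itself a $k \times \cdots \times k$ subarray of a Latin hypercube, every axis-parallel line within it already forces at least $k$ distinct symbols, so $H'$ contains exactly $k$ symbols. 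Thus $H'$ is a $d$-dimensional subhypercube of $H$ of order $k$ with $k \in \{2,\ldots,n-1\}$, i.e.\ a proper subhypercube, contradicting that $H$ is $N_\infty$.

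Two small points need care, and they constitute the only real obstacles. The first is confirming that a $k \times \cdots \times k$ subarray of a Latin hypercube containing exactly $k$ symbols is genuinely a subhypercube in the sense defined in the introduction, and that it is proper precisely when $k \in \{2,\ldots,n-1\}$; this is immediate from the definition, since ``subhypercube of order $k$'' simply means a $k$-sided subarray with exactly $k$ symbols. The second is ensuring the additive shift by $t$ cannot accidentally reduce the number of distinct symbols: but adding a fixed constant modulo $n$ is a bijection on $[n]$, so the shifted symbol set has the same cardinality as the unshifted one, and hence the count of distinct symbols is preserved exactly. With these observations the argument closes. I expect the shift-invariance of the symbol count to be the conceptually central step, though it is a one-line consequence of the bijectivity of $x \mapsto x + t \bmod n$.
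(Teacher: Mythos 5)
Your proof is correct and follows essentially the same route as the paper: fix the extra coordinates, observe that they contribute only a constant additive shift, and conclude that the restriction of $H$ to $S_1\times\cdots\times S_d$ is a proper subhypercube of $H$. The paper phrases the last step via the shifted hypercube $H''[x]=H[x]+s$ rather than by counting symbols directly as you do, but the underlying argument is identical.
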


\begin{proof}
Suppose, for a contradiction, that $\mathcal{H}_{d'}(H)$ has a proper subhypercube $S = H|_{S_1 \times S_2 \times \cdots \times S_{d'}}$ for some subsets $S_i \subseteq [n]$. Let the symbol set of $S$ be $\Xi \subseteq [n]$ of cardinality $k$. For each $i \in [d'] \setminus [d]$ let $s_i \in S_i$ and let $s = \sum_{i=d+1}^{d'} s_i$. Define $H' : S_1 \times \cdots \times S_d \to (\Xi+s)$ by $H'[x_1,x_2,\ldots, x_d] = S[x_1,x_2,\ldots, x_d, s_{d+1}, \ldots, s_{d'}] = H[x_1,x_2,\ldots, x_d]+s \bmod n$. 
It is easy to see that $H'$ is a subhypercube of the Latin hypercube $H'' : [n]^d \to [n]$ defined by $H''[x_1,x_2,\ldots, x_d] \equiv H[x_1,x_2,\ldots, x_d] +s \bmod n$. This implies that $H$ has a proper subhypercube, which is a contradiction.
\end{proof}

Let $H : [n]^3 \to [n]$ be a Latin cube of order $n$. For each $x \in [n]$ the restriction $H|_{[n] \times [n] \times \{x\}}$ induces a Latin square $L_x$ defined by $L_x[i, j] = H[i, j, x]$. We can specify $H$ by listing the Latin squares $L_x$ in order for each $x \in [n]$. McKay and Wanless~\cite{hyper} enumerated Latin hypercubes of small orders and some of the data can be found in~\cite{mckay}. The Latin cube specified by \eref{e:43} is an $N_\infty$ Latin cube of order four. There are five species of Latin cubes of order four and only one of them is $N_\infty$. The Latin cube specified by \eref{e:63} is also $N_\infty$ and has order six. There are $264248$ species of Latin cubes of order six and $17946$ of them are $N_\infty$.
\begin{align}\label{e:43}
\left[\begin{array}{cccc}
	1&2&3&4\\
	2&3&4&1\\
	3&4&1&2\\
	4&1&2&3\\
\end{array}\right|
\begin{array}{cccc}
	2&1&4&3\\
	1&4&3&2\\
	4&3&2&1\\
	3&2&1&4\\
\end{array}
\left|
\begin{array}{cccc}
	3&4&2&1\\
	4&2&1&3\\
	2&1&3&4\\
	1&3&4&2\\
\end{array}\right|
\left.\begin{array}{cccc}
	4&3&1&2\\
	3&1&2&4\\
	1&2&4&3\\
	2&4&3&1\\
\end{array}\right]
\end{align}

\begin{equation}\label{e:63}
\begin{gathered}
	\left[\begin{array}{cccccc}
		1&2&3&4&5&6\\
		2&1&4&3&6&5\\
		3&4&6&5&1&2\\
		4&3&5&6&2&1\\
		5&6&1&2&4&3\\
		6&5&2&1&3&4\\
	\end{array}\right|
	\begin{array}{cccccc}
		2&1&4&3&6&5\\
		1&3&2&6&5&4\\
		6&2&5&4&3&1\\
		3&4&6&5&1&2\\
		4&5&3&1&2&6\\
		5&6&1&2&4&3\\
	\end{array}
	\left|
	\begin{array}{cccccc}
		3&4&5&6&1&2\\
		4&2&6&5&3&1\\
		5&6&3&1&2&4\\
		6&5&1&2&4&3\\
		1&3&2&4&6&5\\
		2&1&4&3&5&6\\
	\end{array}\right|\cdots \\
	\begin{array}{cccccc}
		4&3&6&5&2&1\\
		3&5&1&2&4&6\\
		2&1&4&6&5&3\\
		5&6&2&1&3&4\\
		6&4&5&3&1&2\\
		1&2&3&4&6&5\\
	\end{array}
	\left|\begin{array}{cccccc}
		5&6&1&2&3&4\\
		6&4&5&1&2&3\\
		1&5&2&3&4&6\\
		2&1&3&4&6&5\\
		3&2&4&6&5&1\\
		4&3&6&5&1&2\\
	\end{array}\right|
	\left.
	\begin{array}{cccccc}
		6&5&2&1&4&3\\
		5&6&3&4&1&2\\
		4&3&1&2&6&5\\
		1&2&4&3&5&6\\
		2&1&6&5&3&4\\
		3&4&5&6&2&1\\
	\end{array}\right]
\end{gathered}
\end{equation}

\tref{t:ninfhyper} now follows by combining \tref{t:ninf}, \eref{e:43} and \eref{e:63} with \lref{l:hninf}.

\section{Conclusion}\label{s:conc}

It seems likely that the method that we used to construct $N_\infty$
squares of orders of the form $2^x3^y$ could be generalised to
construct $N_\infty$ squares of many other orders.
We have used corrupting pairs of order 8 and 9 in our recursive step
that takes an $N_\infty$ Latin square of order $\mu$ and creates an
$N_\infty$ Latin square of order $8\mu$ or $9\mu$. There are no
corrupting pairs of order less than $7$, but they probably exist for
all orders 7 and above \cite{1subsq}.  Several of our arguments in the
proof of \tref{t:X} used that $\alpha \geq 8$. While those arguments
could not be directly applied if using a corrupting pair of order $7$,
we do not believe that there is an intrinsic obstacle to using such a
pair. 

Finally, we remark that now that the existence question is settled
for $N_\infty$ Latin squares, the next challenge is to find
asymptotic estimates of their number, along the lines of the estimates
for $N_2$ Latin squares found in \cite{KSS22,KSSS22,KS18}.

\printbibliography

\end{document}